\newtheorem{Theorem}{Theorem}[section]
\newtheorem{lemma}[Theorem]{Lemma}
\newtheorem{Proposition}[Theorem]{Proposition}
\theoremstyle{definition}
\newtheorem{Remark}[Theorem]{Remark}
\newtheorem{example}[Theorem]{Example}
\newcommand{\R}{\mbox{${\mathbb R}$}}
\newcommand{\C}{\mbox{${\mathbb C}$}}
\newcommand{\Q}{\mbox{${\mathbb Q}$}}
\newcommand{\Z}{\mbox{${\mathbb Z}$}}
\newcommand{\w}{\mbox{${\omega}$}}
\newcommand{\cpi}{\mbox{${\mathbb {CP}^1}$}}
\begin{document}

\title[Upper bound for the Gromov width of coadjoint orbits]{\bf Upper bound for the Gromov width of coadjoint orbits of compact Lie
groups }
\author{Alexander Caviedes Castro}
\email{alexander.caviedescastro@mail.utoronto.ca}

\date{}

\maketitle

\begin{abstract}
We find an upper bound for the Gromov width of coadjoint orbits of
compact Lie groups with respect to the Kostant--Kirillov--Souriau
symplectic form by computing certain Gromov--Witten invariants. The
approach presented here is closely related to the one used by Gromov
in his celebrated non-squeezing theorem.
\end{abstract}

\section{Introduction}

The Gromov width of a symplectic manifold $(M^{2n}, \w)$ is defined
as
\[
\operatorname{Gwidth}(M^{2n}, \w):=\sup\{\pi r^2: \exists \text{ a
symplectic embedding } (B^{2n}(r),
\w_{\operatorname{st}})\hookrightarrow (M^{2n}, \w) \},
\]
where $B^{2n}(r):=\{(x_1, \cdots, x_n, y_1, \cdots,
y_n):\sum_{i=1}^n(x_i^2+y_i^2)<r\}$ denotes the open ball of radius
$r$ and center the origin in $\R^{2n}$ and
$\w_{\operatorname{st}}:=\sum_{i=1}^ndx_i\wedge dy_i$ denotes the
standard symplectic form defined on  $B^{2n}(r).$

The Darboux theorem in symplectic geometry implies that the Gromov
width of any symplectic manifold is always positive.

In this paper, we are interested in finding upper bounds for the
Gromov width of coadjoint orbits of compact Lie groups with respect
to the Kostant--Kirillov--Souriau form. The main result in this
paper is summarized in the following theorem:

\begin{Theorem}[Main Theorem] Let $G$ be a compact connected
simple Lie group with Lie algebra $\mathfrak{g}.$ Let $T\subset G$
be a maximal torus and let $\check{R}$ be the corresponding system
of coroots. We identify the dual Lie algebra $\mathfrak{t}^*$ with
the fixed points of the coadjoint action of $T$ on $\mathfrak{g}^*.$
For $\lambda \in \mathfrak{t}^* \subset \mathfrak{g}^*,$ let
$\mathcal{O}_\lambda$ be the coadjoint orbit passing through
$\lambda$  and $\w_\lambda$ be the Kostant--Kirillov--Souriau form
defined on $\mathcal{O}_\lambda.$ Then
\[
\operatorname{Gwidth}(\mathcal{O}_\lambda, \w_\lambda) \leq
\min_{\substack{\check{\alpha} \in \check{R} \\ \langle \lambda,
\check{\alpha} \rangle\ne 0}} |\langle \lambda, \check{\alpha}
\rangle|
\]
\end{Theorem}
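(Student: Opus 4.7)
My plan is to follow the Gromov--Witten variant of the non-squeezing argument. I will produce, for every coroot $\check{\alpha}$ with $\langle \lambda, \check{\alpha}\rangle\ne 0$, a class $A_{\check{\alpha}}\in H_2(\mathcal{O}_\lambda;\Z)$ of symplectic area $|\langle \lambda, \check{\alpha}\rangle|$ together with a non-vanishing Gromov--Witten invariant involving a point insertion in class $A_{\check{\alpha}}$, and then invoke the general principle (originating with Gromov, and formalized in the GW setting by McDuff, Liu, Lu and others) that such non-vanishing implies $\operatorname{Gwidth}(\mathcal{O}_\lambda,\w_\lambda)\leq \w_\lambda(A_{\check{\alpha}})$. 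Taking the minimum over $\check{\alpha}$ then yields the stated bound. Both sides of the inequality are invariant under the Weyl group action on $\lambda$, so I may assume $\lambda$ is dominant; since every positive coroot is a non-negative integer combination of the simple coroots $\check{\alpha}_1,\dots,\check{\alpha}_r$, the minimum on the right hand side is attained by some simple coroot $\check{\alpha}_i$ corresponding to a simple root $\alpha_i$ outside the Levi factor of the stabilizer $P_\lambda$ of $\lambda$.

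For such an $\alpha_i$, the subgroup $SU(2)_{\alpha_i}\subset G$ associated to the $\mathfrak{sl}_2$-triple $(e_{\alpha_i},\check{\alpha}_i,f_{\alpha_i})$ acts on $\mathcal{O}_\lambda\cong G/P_\lambda$ with orbit through $\lambda$ a holomorphically embedded two-sphere $S_{\alpha_i}$. A direct calculation, identifying this orbit with $\cpi$ carrying $\langle \lambda,\check{\alpha}_i\rangle$ times the Fubini--Study form, yields $\int_{S_{\alpha_i}}\w_\lambda=\langle \lambda,\check{\alpha}_i\rangle$. These spheres and their $G$-translates sweep out $\mathcal{O}_\lambda$ transitively, providing the distinguished family of holomorphic curves that will serve as the GW input.

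The main step, and the expected main obstacle, is the non-vanishing of a suitable GW invariant in the class $A_i:=[S_{\alpha_i}]$. For the $G$-invariant integrable complex structure on $\mathcal{O}_\lambda$, a Grothendieck splitting of the pullback of $T\mathcal{O}_\lambda$ to $\cpi$, combined with an $\mathfrak{sl}_2$-representation-theoretic analysis of the linearized $\bar\partial$ operator, should show that the moduli space of $J$-holomorphic maps $\cpi\to \mathcal{O}_\lambda$ in class $A_i$ is exactly the $G$-orbit of $S_{\alpha_i}$ and is cut out transversely by the Cauchy--Riemann equation. Because $G$ acts transitively on this moduli space and every point of $\mathcal{O}_\lambda$ lies on some translate of $S_{\alpha_i}$, the evaluation map at a marked point is a submersion onto $\mathcal{O}_\lambda$, giving a non-zero enumerative count. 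The delicate point is matching virtual dimensions: when $c_1(A_i)>2$ the naive $1$-point invariant vanishes for dimensional reasons, and one must instead use a multi-point or descendant GW invariant with a point insertion (equivalently, read the non-vanishing off the quantum Chevalley formula of Fulton--Woodward and Peterson for $QH^*(G/P_\lambda)$). The passage from GW non-vanishing to the Gromov width bound is then a standard application of Gromov compactness and the monotonicity inequality for $J$-holomorphic curves, exactly as in the original non-squeezing theorem.
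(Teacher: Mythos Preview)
Your overall strategy---produce a non-vanishing Gromov--Witten invariant with a point insertion in each class $A_\alpha=[X_P(s_\alpha)]$ and then apply the Gromov/non-squeezing argument---is exactly the route the paper takes. However, one of your intermediate claims is false, and the part you flag as ``delicate'' is in fact the entire content of the proof.

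The assertion that the moduli space $\overline{\mathcal{M}}_{A_\alpha,0}(G_{\mathbb C}/P,J)$ is ``exactly the $G$-orbit of $S_{\alpha}$'' fails for short simple roots. For instance, in type $C_n$ with $\alpha=\alpha_k$ ($k<n$), the moduli space of degree-$A$ curves in $IG(k,2n)$ is the set of flags $V^{k-1}\subset V^{k+1}\subset (V^{k-1})^\Omega$; this is irreducible but has \emph{two} $Sp(2n,\mathbb C)$-orbits, according to whether $V^{k+1}$ is isotropic, and the $T$-invariant curve $C_{\alpha_k}$ sits in the closed (non-generic) orbit. So your proposed transversality/surjectivity argument via a single $G$-orbit cannot produce the needed dimension count in general, and the paper indeed treats the short-root cases (types $B$, $C$, $F_4$, $G_2$) separately.

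What the paper actually does is your ``fallback'' made precise. It uses the curve neighborhood $\Gamma_A(\mathrm{pt})=\operatorname{ev}_2(\operatorname{ev}_1^{-1}(\mathrm{pt}))$ of Buch--Mihalcea and shows by a case analysis (long roots via an identification $\overline{\mathcal{M}}_{A,0}\cong G_{\mathbb C}/Q$ and $\overline{\mathcal{M}}_{A,1}\cong G_{\mathbb C}/R$; short roots type by type) that
\[
c_1(A)=1+\dim_{\mathbb C}\Gamma_A(\mathrm{pt}),
\]
which forces the two-point invariant $\operatorname{GW}_{A,2}\bigl(\operatorname{PD}[\mathrm{pt}],\operatorname{PD}[\Gamma_A(\mathrm{pt})]^{\mathrm{op}}\bigr)=1$. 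This is the curve-neighborhood incarnation of the Fulton--Woodward quantum Chevalley computation you allude to; invoking that formula by name is not a proof, and carrying it out amounts to exactly this dimension identity. For non-maximal $P$ the paper reduces to the maximal case via the fibration $G_{\mathbb C}/P\to G_{\mathbb C}/Q$ whose fibre is a Grassmannian, and it handles irrational $\lambda$ by approximating with rational $\lambda_n$ so that the Cieliebak--Mohnke/indecomposable-class machinery applies before passing to the limit via Gromov compactness---a technical point your sketch omits.
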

The proof of the Main Theorem follows the same line of thought as
the one given by Gromov in his non-squeezing theorem \cite{gromov}.
It follows from Gromov's work that $J$-holomorphic curves can be
used to bound from above the Gromov width of a symplectic manifold.
Roughly speaking, if for a compact symplectic manifold $(M, \w)$
there is a non-vanishing Gromov-Witten invariant of the form
$\operatorname{GW}_{A,
k}(\operatorname{PD}[\operatorname{pt}],\alpha_2, \cdots, \alpha_k)$
for some degree $A\in H_2(M; \Z)$ and some cohomology classes
$\alpha_2, \cdots, \alpha_k\in H^*(M; \Z),$ then
$$
\operatorname{Gwidth}(M, \w) \leq \w(A)
$$
In this paper we use this approach to bound the Gromov width of
arbitrary coadjoint orbits of compact Lie groups. We adopt the
definition of the Gromov-Witten invariant given by Cieliebak and
Mohnke for integral symplectic manifolds \cite{cieliebak-mohnke} and
we use curve neighborhoods (Buch, Mihalcea and Perrin
\cite{BuchChaputMihalceaPerrin}) to compute them. We explain why in
our considerations these notions are consistent and imply the bound
appearing in the Main Theorem.

The Main Theorem extends a result of Zoghi \cite{zoghi} for
\textit{regular} coadjoint orbits of compact Lie groups that in
addition satisfy a condition that Zoghi named
\textit{indecomposable}. Recall that a coadjoint orbit of a compact
Lie group $G$ is regular if it is isomorphic to $G/T$ where $T$ is a
maximal torus of $G.$

In contrast to the problem of bounding the Gromov width of coadjoint
orbits from above, Pabiniak has considered the problem of bounding
it from below \cite{milena2}. For instance, Pabiniak has proved that
the upper bound appearing in the Main Theorem is indeed an equality
for arbitrary coadjoint orbits of $U(n)$ and certain coadjoint
orbits of $SO(n)$ with some technical assumptions made on $\lambda$.
Together with our result, this yields the following theorem for
coadjoint orbits of $U(n)$:

\begin{Theorem} Let us identify the Lie algebra of $U(n)$ with its dual via the
ad--invariant inner product
$$
(A, B) \to \operatorname{tr}(A\cdot B)
$$
For $(\lambda_1, \cdots, \lambda_n)\in \R^n,$ let
$\lambda=i\operatorname{diag}(\lambda_1, \cdots, \lambda_n) \in
\mathfrak{u}(n)\cong \mathfrak{u}(n)^*.$ Let $\mathcal{O}_\lambda$
be the coadjoint orbit of $U(n)$ passing through $\lambda\in
\mathfrak{u}(n)^*$ and $\w_\lambda$ be the Kostant-Kirillov-Souriau
form defined on the coadjoint orbit, then
$$
\operatorname{Gwidth}(\mathcal{O}_\lambda,
\w_\lambda)=\min_{\lambda_i \ne \lambda_j}|\lambda_{i}-\lambda_{j}|
$$
\end{Theorem}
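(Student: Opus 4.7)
The theorem will be obtained by combining the upper bound supplied by the Main Theorem with the matching lower bound due to Pabiniak \cite{milena2} mentioned in the paragraph preceding the statement. The Main Theorem is stated for compact connected \emph{simple} Lie groups, while $U(n)$ is only reductive, so the first task is to reduce from $U(n)$ to $SU(n)$: the center $Z(U(n))\cong U(1)$ of scalar matrices acts trivially by conjugation, hence the $U(n)$-coadjoint orbit through $\lambda=i\operatorname{diag}(\lambda_1,\ldots,\lambda_n)$ agrees, as a symplectic manifold with the Kostant--Kirillov--Souriau form, with the $SU(n)$-coadjoint orbit through its traceless projection $\lambda':=\lambda-\tfrac{i\operatorname{tr}\lambda}{n}I\in\mathfrak{su}(n)^*$.

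Once the reduction is in place, I would identify the coroot system of $SU(n)$: the coroots are $\check{\alpha}_{jk}=i(E_{jj}-E_{kk})$ for $1\leq j\ne k\leq n$. Under the trace pairing of the statement a direct computation gives
$$\langle \lambda',\check{\alpha}_{jk}\rangle=\operatorname{tr}\bigl(\lambda'\cdot i(E_{jj}-E_{kk})\bigr)=-(\lambda_j-\lambda_k),$$
so $|\langle \lambda',\check{\alpha}_{jk}\rangle|=|\lambda_j-\lambda_k|$. Since every coroot of $SU(n)$ has this form, the Main Theorem yields $\operatorname{Gwidth}(\mathcal{O}_\lambda,\w_\lambda)\leq \min_{\lambda_i\ne\lambda_j}|\lambda_i-\lambda_j|$.

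The reverse inequality is the content of Pabiniak's work \cite{milena2}, which constructs explicit symplectic embeddings $(B^{2n}(r),\w_{\operatorname{st}})\hookrightarrow(\mathcal{O}_\lambda,\w_\lambda)$ for every $r$ with $\pi r^2<\min_{\lambda_i\ne\lambda_j}|\lambda_i-\lambda_j|$. Combining both bounds yields the claimed equality. The main difficulty is bookkeeping rather than a new idea: one must verify that the symplectic structure transfers correctly under the reduction from $U(n)$ to $SU(n)$, and that the sign and normalization conventions make the coroot pairing come out to precisely $\pm(\lambda_j-\lambda_k)$ and not some scalar multiple thereof.
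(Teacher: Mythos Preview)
Your proposal is correct and follows exactly the route the paper takes: the theorem is stated in the introduction as the conjunction of the Main Theorem's upper bound with Pabiniak's lower bound \cite{milena2}, and no separate proof is given. Your reduction from $U(n)$ to $SU(n)$ and the explicit coroot computation simply make precise what the paper leaves implicit.
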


A coadjoint orbit of $U(n)$ can be identified with a partial flag
manifold. Two special cases are the full flag manifold and the
Grassmannian manifold. The above theorem extends the results of G.
Lu \cite{glu}; Karshon and Tolman \cite{karshon} for Grassmannian
manifolds and Zoghi \cite{zoghi} for \textit{indecomposable} flag
manifolds to partial flag manifolds.

In addition to the examples metioned above, several authors have
used Gromov's method for bounding the Gromov width of other families
of symplectic manifolds such as G. Lu \cite{glu2}, McDuff and
Polterovich \cite{spacking} and Biran \cite{biran}.

This paper is organized as follows: in the second section, we review
the $J$-holomorphic tools that we use throughout the text, and then
we explain how upper bounds for the Gromov width of symplectic
manifolds manifolds can be obtained by a non-vanishing
Gromov--Witten invariant. In the third section, we recall background
on the geometry of coadjoint orbits of compact Lie groups and
homogeneous spaces. In the fourth section, we define the concept of
curve neighborhood and indicate its relation with Gromov-Witten
invariants. In the fifth section, we show the upper bound appearing
in the Main Theorem for Grassmannian manifolds. In the sixth
section, the Main Theorem is proven for non-regular coadjoint orbits
of compact Lie groups. In the appendix, we discuss about fibrations
of homogeneous spaces and show two technical lemmas that are needed
in the proof of the Main Theorem for Grassmannian manifolds.

\textbf{Acknowledgments} I would like to thank Yael Karshon for
introducing me this problem and for encouraging me during the
writing process of this paper. I also would like to thank to Milena
Pabiniak for useful conversations and helpful comments on the first
draft.

\section{$J$-holomorphic curves}

In this section we give a short review of pseudoholomorphic theory
and Gromov-Witten invariants, and we show how pseudoholomorphic
curves are related with the Gromov width of a symplectic manifold.
Most of the material presented here is adapted from McDuff and
Salamon \cite{mcduff}.

\subsection{Pseudoholomorphic curves}

Let $(M^{2n}, \w)$ be a compact symplectic manifold. An almost
complex structure $J$ on $(M, \w)$ is a smooth operator $J:TM\to TM$
such that $J^2=-\operatorname{Id}.$ We say that an almost complex
structure $J$ is \textbf{compatible} with $\w$ if
$$
g(\cdot\,, \cdot\,):=\w(\cdot\,, J\cdot\,)
$$
defines a Riemannian metric on $M$. We denote the space of almost
complex structures compatible with $\w$ by $\mathcal{J}(M, \w).$

Let $(\cpi, j)$ be the Riemann sphere with its standard complex
structure $j.$ Let $J\in \mathcal{J}(M, \w).$ A map $u:\cpi \to M$
is a \textbf{$J$-holomorphic curve of genus zero} or simply a
\textbf{$J$-holomorphic sphere}  if for every $z\in \cpi$
$$
du(z)+J(z)\circ du(z)\circ j=0
$$
The \textbf{degree} of a $J$-holomorphic sphere $u:\cpi \to M$ is
$$
\deg{u}:=u_{*}[\cpi]\in H_2(M; \Z)
$$
A homology class $A\in H_2(M; \Z)$ is \textbf{spherical} if it is in
the image of the Hurewicz homomorphism $\pi_2(M)\to H_2(M;\Z).$

A curve $u:\cpi \to M$ is said to be \textbf{multiply covered} if it
is the composite of a holomorphic branched covering map $(\cpi,
j)\to (\cpi, j)$ of degree greater than one with a $J$-holomorphic
map $\cpi \to M.$ It is \textbf{simple} if it is not multiply
covered.

Let $k$ be a nonnegative integer. A \textbf{nodal curve of genus
zero with $k$-marked points} is a tuple $(\Sigma; z_1, \cdots, z_k)$
consisting of a compact nodal Riemman surface $\Sigma$ of arithmetic
genus zero together with a set of $k$-pairwise different points
$z_1, \cdots, z_k$ that are not nodes of $\Sigma.$ We denote a nodal
curve with $k$-marked points $(\Sigma; z_1, \cdots, z_k)$ by
$\mathbf{z}$ and call $\Sigma_{\mathbf{z}}:=\Sigma$ the nodal
surface of $\mathbf{z}.$ A nodal curve with $k$-marked points
$\mathbf{z}$ is \textbf{stable} if the number of nodes and marked
points in any irreducible component of $\Sigma_{\mathbf{z}}$ is
greater or equal to three. Two nodal curves with  $k$-marked points
$\mathbf{z}=(\Sigma; z_1, \cdots, z_k), \mathbf{z}'=(\Sigma'; z_1',
\cdots, z_k')$ are equivalent if there exists an isomorphism of
curves $\phi:\Sigma \to \Sigma'$ such that $\phi$ maps $\{z_1,
\cdots, z_k\}$ bijectively onto $\{z_1', \cdots, z_k'\}.$ We denote
by $\overline{\mathcal{M}}_{k}$ the moduli space of  equivalent
classes of stable nodal curves of genus zero with $k$-marked points.

A \textbf{smooth curve with $k$-marked points} is a pair $(u\,;
\mathbf{z})$ consisting of a nodal curve with $k$-marked points
$\mathbf{z}$ and a smooth map $u:\Sigma_{\mathbf{z}} \to M.$ A
\textbf{smooth sphere with $k$-marked points} is a smooth curve with
$k$-marked points $(u\,; \mathbf{z})$ such that
$\Sigma_{\mathbf{z}}$ is isomorphic with $\cpi.$ A smooth curve with
$k$-marked points $(u\,; \mathbf{z})$ is $J$-holomophirc if the
restriction of $u$ on each of the irreducible components of
$\Sigma_{\mathbf{z}}$ is $J$-holomorphic. A $J$-holomorphic curve
$(u\,; \mathbf{z})$ is \textbf{simple} if $\Sigma_{\mathbf{z}}$ is
isomorphic with $\cpi$ and $u$ is simple. Two $J$-holomorphic curves
with $k$-marked points $(u;\, \mathbf{z}), (u';\, \mathbf{z}')$ are
\textbf{equivalent} if there exists an isomorphism
$\phi:\Sigma_{\mathbf{z}}\to \Sigma_{\mathbf{z'}}$ of curves such
that $u'=u\circ \phi$ and $\phi$ maps $\mathbf{z}$ bijectively onto
$\mathbf{z}'.$ A $J$-holomorphic curve with $k$-marked points $(u;
\mathbf{z})$ is \textbf{stable} if the number of marked and nodal
points in any component of $\Sigma_{\mathbf{z}}$ is greater or equal
to three whenever the restriction of $u$ on the component is
constant.

Let $A\in H_2(M; \Z).$ We denote by $\overline{\mathcal{M}}_{A,
k}(M, J)$ the moduli space of equivalent classes $[(u; \mathbf{z})]$
of stable $J$-holomorphic curves with $k$-marked points of degree
$\deg{\Sigma_{\mathbf{z}}}:=u_*[\Sigma_{\mathbf{z}}]=A.$ We denote
by $\mathcal{M}_{A, k}(M, J)$ the moduli space of equivalent classes
of $J$-holomorphic spheres $[(u; \mathbf{z})]$ in
$\overline{\mathcal{M}}_{A, k}(M, J)$ and by $\mathcal{M}_{A,
k}^*(M, J)$ the moduli space of equivalent classes of simple
$J$-holomorphic spheres $[(u; \mathbf{z})]$ in
$\overline{\mathcal{M}}_{A, k}(M, J).$ The moduli space
$\overline{\mathcal{M}}_{A, k}(M, J)$ is compact with respect to the
Gromov topology (see McDuff and Salamon \cite{mcduff}). The
\textbf{evaluation map}
$$
\operatorname{ev}_J^k=(\operatorname{ev}_1, \ldots,
\operatorname{ev}_k):\overline{\mathcal{M}}_{A, k}(M, J) \to M^k
$$
is defined by
$$
\operatorname{ev}_J^k[u;\,(\Sigma;\, z_1, \cdots, z_k)]=(u(z_1),
\cdots, u(z_k)).
$$

We denote by $\mathcal{J}_{\operatorname{reg}}(M, \w)\subset
\mathcal{J}(M, \w)$ the set of almost complex structures that are
regular in the sense of McDuff and Salamon \cite[Definition 3.1.4,
Section 6.2]{mcduff}. When $J$ is a regular almost complex
structure, the moduli space $\mathcal{M}_{A, k}^*(M, J)$ is a smooth
oriented manifold of dimension equal to
$$
\dim{M}+2c_1(A)+2k-6,
$$
where $c_1$ denotes the first Chern class of $TM$ (see e.g. McDuff
and Salamon \cite{mcduff}).

\subsection{The Gromov width and pseudoholomorphic curves}
For $r>0,$ let
$$
B^{2n}(r):=\Bigl\{(x_1, \cdots, x_n, y_1, \cdots, y_n)\in
\R^{2n}:\sum_{i=1}^n(x_i^2+y_i^2)<r^2\Bigr\}
$$
denote the open ball in $\R^{2n}$ of radius $r$ and center the
origin. Let
$$
\w_{\operatorname{st}}:=\sum_{i=1}^n dx_i\wedge dy_i
$$ be the standard symplectic
form defined on $B^{2n}(r).$ Given a symplectic manifold $(M^{2n},
\w),$ its \textbf{Gromov's width} is defined as
$$
\operatorname{Gwidth}(M^{2n}, \w):=\sup{\{\pi r^2: \exists\text{ a
symplectic embedding }  (B^{2n}(r), \w_{\operatorname{st}})
\hookrightarrow (M^{2n}, \w)\}}
$$
The Darboux theorem implies that the Gromov width of a symplectic
manifold is always positive. Moreover, if the symplectic manifold is
compact, its Gromov width is finite.

For a symplectic manifold $(M, \w)$ and a homology class $A\in
H_2(M; \Z)$ we denote by $\w(A)$ the symplectic area of $A.$

The following statement goes back to Gromov and shows the relation
between the Gromov width of a symplectic manifolds and its
pseudoholomorphic curves.

\begin{Theorem}[Gromov \cite{gromov}]\label{nsq}
Let $(M^{2n}, \w)$ be a compact symplectic manifold and $A\in H_2(M;
\Z)$ be a nontrivial spherical class. Suppose that for any almost
complex structure $\tilde{J}\in \mathcal{J}(M, \w)$ and for any
point $p\in M$ there exists a $\tilde{J}$-holomorphic sphere of
degree $B$ passing through $p$ with $0<\w(B)\leq \w(A).$ Then for
any symplectic embedding $B^{2n}(r)\hookrightarrow M,$ we have
$$
\pi r^2 \leq \w(A)
$$
In particular,
$$
\operatorname{Gwidth}(M, \w)\leq \w(A).
$$
\end{Theorem}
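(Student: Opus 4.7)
The plan is to follow Gromov's original strategy: given any symplectic embedding $\phi:(B^{2n}(r),\w_{\operatorname{st}})\hookrightarrow (M,\w)$, choose a compatible almost complex structure on $M$ that is ``standard'' on the image of $\phi$, produce a $\tilde J$-holomorphic sphere through $\phi(0)$ via the hypothesis, and bound its area from below using the monotonicity inequality for holomorphic curves.

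First, I would construct $\tilde J \in \mathcal{J}(M, \w)$ such that $\phi^*\tilde J = J_{\operatorname{st}}$ on some slightly smaller ball $B^{2n}(r')\subset B^{2n}(r)$, with $r'<r$ arbitrary. This can be arranged by a partition-of-unity argument exploiting the convex fibration structure of compatible almost complex structures: the pushforward $\phi_*J_{\operatorname{st}}$ lives on $\phi(B^{2n}(r'))$ and extends, after interpolating with any fixed background $J_0\in \mathcal{J}(M,\w)$, to a globally defined compatible almost complex structure on $M$.

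Next, I would apply the hypothesis to this $\tilde J$ and to the basepoint $p=\phi(0)$, obtaining a $\tilde J$-holomorphic sphere $u:\cpi\to M$ of some class $B$ with $u(z_0)=\phi(0)$ for some $z_0\in\cpi$ and $0<\w(B)\leq \w(A)$. Restricting to the open set $S:=u^{-1}(\phi(B^{2n}(r')))\subset \cpi$ and transporting via $\phi^{-1}$ yields an honest $J_{\operatorname{st}}$-holomorphic map $v:=\phi^{-1}\circ u|_S : S\to B^{2n}(r')$ whose image contains the origin. Since $u$ is defined on the compact $\cpi$, the image of $v$ is a proper one-complex-dimensional analytic subset of $B^{2n}(r')$ passing through $0$.

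The key step, and the main obstacle, is then the monotonicity inequality: any proper one-complex-dimensional analytic subset of a Euclidean ball passing through the center has symplectic area (counted with multiplicity) at least $\pi(r')^2$. This is the analytic core of the argument; it follows from Lelong's classical theorem on densities of positive closed currents, or equivalently from the monotonicity formula for minimal surfaces combined with the observation that holomorphic curves in $\C^n$ are calibrated and thus area-minimizing. Granting it, since $\phi$ is a symplectomorphism, the $\w_{\operatorname{st}}$-area of the image of $v$ equals $\int_S (u|_S)^*\w$, which is bounded above by $\int_{\cpi} u^*\w = \w(B) \leq \w(A)$. Combining inequalities gives $\pi(r')^2\leq \w(A)$, and letting $r'\nearrow r$ produces the desired bound $\pi r^2\leq \w(A)$.
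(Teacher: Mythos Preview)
Your proposal is correct and follows essentially the same route as the paper's proof: construct a compatible almost complex structure equal to the pushforward of $J_{\operatorname{st}}$ on the image of a slightly smaller ball, produce the guaranteed holomorphic sphere through the center, pull back to a proper holomorphic curve in the Euclidean ball, apply the monotonicity inequality, and let the inner radius tend to $r$. The paper writes the inner radius as $r-\epsilon$ rather than $r'$ and is terser about the extension of $\tilde J$ and the source of the monotonicity bound, but the argument is otherwise identical.
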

\begin{proof}
Let us suppose that there exists a symplectic embedding
$$
\rho: B^{2n}(r)\hookrightarrow M
$$
Let $J_{\operatorname{st}}$ be the standard almost complex structure
defined on the open ball $B^{2n}(r).$ Given $\epsilon\in (0, r),$
let $\tilde{J}$ be an almost complex structure that is equal to
$\rho_*(J_{\operatorname{st}})$ on the open set
$D:=\rho(B^{2n}(r-\epsilon))\subset M.$ By assumption, there exists
a $\tilde{J}$-holomorphic sphere $u:\cpi \to M$ of degree $B$
passing through $\rho(0)$ with $0<\w(B)\leq \w(A).$ The restriction
of $u$ to $S:=u^{-1}(D)\subset \cpi,$ gives a proper holomorphic map
$$
u':S\to B^{2n}(r-\epsilon)
$$
that passes through the origin. The monotonicity theorem implies
that the area of this curve is bounded from below by
$\pi(r-\epsilon)^2.$ As a consequence,
$$
\pi(r-\epsilon)^2 \leq  \operatorname{area}(u')\leq
\operatorname{area}(u)=\w(B) \leq \w(A)
$$
Since this inequality is true for every $\epsilon\in (0, r),$
$$
\pi r^2 \leq \w(A) \qedhere
$$
\end{proof}

\subsection{Pseudocycles}

In order to define the Gromov-Witten invariants, we review the
notion of pseudocycle. A subset $\Omega\subset X$ of a manifold $X$
has \textbf{dimension at most $d$} if $\Omega$ is contained in the
image of a smooth map $g:N\to X$ from a manifold of dimension
$\dim{N}\leq d.$ A smooth map $f:M\to X$ from an oriented
$d$-manifold $M$ to a manifold $X$ is called \textbf{$d$-dimensional
pseudocycle} if $f(M)$ has compact closure and its omega limit set
$$
\Omega_f:=\bigcap_{K\subset M \text{ compact}}\overline{f(M-K)}
$$
has dimension at most $d-2.$ Roughly speaking this means that $f(M)$
can be compactified by adding strata of codimension at least two.

Two $d$-dimensional pseudocycles $f_i:M_i \to X, i=0, 1,$ are called
\textbf{cobordant} if there exists a smooth map $F:W\to X$ from an
oriented $(d+1)$-manifold $W$ with boundary $\partial W=M_1-M_0$
such that $F|_{M_i}=f_i$ and $\dim \Omega_F\leq d-1.$

Two pseudocycles $f_i:M_i\to X, i=0, 1, $ are called
\textbf{strongly transverse} if
$$
\Omega_{f_0}\cap \overline{f_{1}(M_{1})}=\emptyset, \,\,\,\,
\overline{f_{0}(M_{0})}\cap \Omega_{f_1} =\emptyset
$$
and $f_1(m_1)=f_2(m_2)=x$ implies
$T_xX=\operatorname{im}df_1|_{m_1}+\operatorname{im}df_2|_{m_2}.$

There exists a set $\operatorname{Diff}_{\operatorname{reg}}(X, f_1,
f_2)\subset \operatorname{Diff}(X)$ of the second category such that
$f_1$ is strongly transverse to $\phi\circ f_2$ for every $\phi\in
\operatorname{Diff}_{\operatorname{reg}}(X, f_1, f_2).$ If $f_1$ and
$f_2$ are strongly transverse, the set $\{(m_1, m_2)\in M_1\times
M_2:f_1(m_1)=f_2(m_2)\}$ is a compact manifold of dimension
$\dim{M_1}+\dim{M_2}-\dim{X}.$ In particular, this set is finite if
$M_1$ and $M_2$ are of complementary dimension. In this case, the
\textbf{intersection number} of $f_1$ and $f_2$ is defined as
$$
\sharp( f_1\pitchfork f_2):=\sum_{\substack{m_1\in M_1, m_2\in M_2 \\
f_1(m_1)=f_2(m_2)}} f_1(m_1)\cdot f_2(m_2),
$$
where $f_1(m_1)\cdot f_2(m_2)$ denotes the intersection number of
$f_1(M_1)$ and $f_2(M_2)$ at $f_1(m_1)=f_2(m_2).$ The intersection
number $f_1\pitchfork f_2$ depends only on the bordism classes of
$f_1$ and $f_2$ (McDuff and Salamon \cite{mcduff}[Section 6.5]).

\subsection{The Gromov width and Gromov-Witten
invariants of semipositive symplectic
manifolds}\label{gwidthgwinvariants}

A symplectic manifold $(M^{2n}, \w)$ is \textbf{semipositive} if,
for a spherical class $A$ with positive symplectic area, $c_1(A)\geq
3-n$ implies $c_1(A)\geq 0.$

Let $(M, \w)$ be a semipositive symplectic manifold. Let $A$ be a
spherical class and  $k$ be a nonnegative integer. For $J\in
\mathcal{J}_{\operatorname{reg}}(M, \w),$ the evaluation map
$$
\operatorname{ev}_J^k:\mathcal{M}^*_{A, k}(M, J) \to M^k
$$
defines a pseudocycle of dimension equal to
$$
\dim{M}+ 2c_1(A)+2k-6
$$
The pseudocycle $\operatorname{ev}_J^k:\mathcal{M}^*_{A, k}(M, J)
\to M^k,$ up to cobordism,  is independent of the regular almost
complex structure $J\in \mathcal{J}_{\text{reg}}(M, \w)$ (see e.g.
McDuff and Salamon \cite[Chapter 6]{mcduff}).

Let $J\in \mathcal{J}_{\operatorname{reg}}(M, \w)$ and $a_1, \cdots,
a_k$ be cohomology classes of total degree
$$
\sum_{i=1}^k \deg{a_i}=\dim{M}+2c_1(A)+2k-6
$$
If the co\-ho\-mo\-lo\-gy classes $a_i$ are Poincar\'e dual to the
fundamental classes of cycles $X_i\subset M$ so the evaluation map
$\operatorname{ev}_J^k:\mathcal{M}^*_{A, k}(M, J) \to M^k $ is
transversal to $X_1\times \cdots \times X_k \subset M^k,$ the
\textbf{Gromov-Witten invariant} $\operatorname{GW}^J_{A, k}(a_1,
\ldots,a_k)$ is the intersection number
$$
\operatorname{GW}_{A, k}^J(a_1,\ldots, a_k):=\sharp
\operatorname{ev}_J^{k}\pitchfork(X_1\times \cdots\times X_k)
$$
The Gromov-Witten invariant $\operatorname{GW}_{A, k}^J(a_1,\ldots,
a_k)$ is well-defined, finite and independent of the regular almost
complex structure $J$ and of generic perturbation of $X_1\times
\cdots \times X_k$ (McDuff and Salamon \cite[Theorem 6.6.1, Theorem
7.1.1]{mcduff}).

\begin{Remark}\label{gwj}
Let $(M, \w)$ be a compact symplectic manifold that is not
necessarily semipositive. For $J\in \mathcal{J}(M, \w),$ we say that
a homology class $B\in H_2(M; \Z)$ is $J$-\textbf{indecomposable} if
it can not be decomposed as a sum $B=B_1+\cdots +B_l,$ where $l\geq
2$ and each $B_i$ has a nonconstant spherical $J$-holomorphic
representative.

Let $A\in H_2(M; \Z)$ and $k\in \Z_{\geq 0}.$ There exists a subset
$\mathcal{J}_{\operatorname{reg}}(A) \subset \mathcal{J}(M, \w)$ of
the second category such that for any $J\in
\mathcal{J}_{\operatorname{reg}}(A)$ the moduli space
$\mathcal{M}^*_{A, k}(M, J)$ is a smooth manifold. If $J\in
\mathcal{J}_{\operatorname{reg}}(A)$ and $A$ is $J$-indecomposable,
the evaluation map
$$
\operatorname{ev}_J^k:\mathcal{M}^*_{A, k}(M, J) \to M^k
$$
defines a pseudocycle and the Gromov-Witten invariant
$\operatorname{GW}_{A, k}^J$ can be defined using
$\operatorname{ev}_J^k.$ This Gromov-Witten invariant is defined
exclusively for the regular almost complex structure $J$ and the
$J$-indecomposable class $A$ (see e.g. McDuff and Salamon
\cite{mcduff}[Lemma 7.1.8]).
\end{Remark}

The following theorem allows us to bound the Gromov width of a
semipositive symplectic compact manifold in terms of its
Gromov-Witten invariants.

\begin{Theorem}\label{nsq2}
Let $(M, \w)$ be a compact semipositive symplectic manifold. If
there exist a regular almost complex structure $J,$ a spherical
class $A$ and cohomology classes $a_2, \ldots, a_k$ Poincar\'e dual
to the fundamental classes of cycles $X_2, \ldots, X_k \subset M$
such that
$$
\operatorname{GW}_{A, k}^J(\operatorname{PD}[\operatorname{pt}],
a_2, \ldots, a_k)\ne 0,
$$
then
$$
\operatorname{Gwidth}(M, \w)\leq \w(A)
$$
\end{Theorem}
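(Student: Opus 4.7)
The plan is to reduce to Theorem \ref{nsq}: it suffices to show that for every compatible almost complex structure $\tilde J\in\mathcal J(M,\w)$ and every point $p\in M$ there is a $\tilde J$-holomorphic sphere of some spherical class $B$ whose image contains $p$ and for which $0<\w(B)\le\w(A)$. Once this is verified, Theorem \ref{nsq} immediately yields $\operatorname{Gwidth}(M,\w)\le\w(A)$.

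I would first handle the specific regular $J$ appearing in the hypothesis. Since the intersection number defining $\operatorname{GW}^J_{A,k}$ depends only on the bordism classes of the chosen cycles, and any two points of a connected manifold are bordant, I may replace the cycle representing $\operatorname{PD}[\operatorname{pt}]$ by $\{p\}$ for any prescribed $p\in M$. A diffeomorphism from $\operatorname{Diff}_{\operatorname{reg}}$ applied to $X_2\times\cdots\times X_k$ restores strong transversality of $\operatorname{ev}_J^k$ to $\{p\}\times X_2'\times\cdots\times X_k'$, and the nonvanishing hypothesis $\operatorname{GW}^J_{A,k}(\operatorname{PD}[\operatorname{pt}],a_2,\ldots,a_k)\ne 0$ forces this strongly transverse intersection to be nonempty. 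Each of its points produces a simple $J$-holomorphic sphere $u:\cpi\to M$ of class $A$ with some marked point $z_1$ satisfying $u(z_1)=p$. Thus for our fixed $J$ every point of $M$ lies on a $J$-holomorphic sphere of class exactly $A$.

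To extend this existence to an arbitrary (possibly nonregular) $\tilde J\in\mathcal J(M,\w)$, I would choose a sequence $J_n\in\mathcal J_{\operatorname{reg}}(M,\w)$ with $J_n\to\tilde J$ in $C^\infty$ (density of the regular set), produce a simple $J_n$-holomorphic sphere $u_n:\cpi\to M$ of class $A$ through $p$ for each $n$ by the previous paragraph, and then apply Gromov's compactness theorem to the uniformly bounded-energy sequence $(u_n;z_1^{(n)})$. After passing to a subsequence the limit is a stable genus zero $\tilde J$-holomorphic map $u_\infty$ of total class $A$ carrying a distinguished marked point mapped to $p$. Its nonconstant components are $\tilde J$-holomorphic spheres $\cpi\to M$ whose classes $B_i$ are spherical, sum to $A$, and satisfy $\w(B_i)>0$ by the energy identity. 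Walking along the bubble tree from the component supporting the marked point through any intervening ghost bubbles (each of which is constant at $p$), I reach a nonconstant $\tilde J$-holomorphic sphere $v:\cpi\to M$ through $p$ of some class $B$ with $0<\w(B)\le\w(A)$, which is precisely the sphere required by Theorem \ref{nsq}.

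The main subtlety is the possibility that the component supporting the distinguished marked point in $u_\infty$ is a constant ghost; stability then forces this component to be attached to the rest of the tree at several nodes, and the incident nonconstant spheres all pass through $p$ by continuity of $u_\infty$, so the traversal above recovers a genuine $\tilde J$-holomorphic sphere through $p$ while keeping the symplectic area bounded by $\w(A)$. Semipositivity enters only implicitly: it guarantees via the preceding subsection that the Gromov--Witten invariant in the hypothesis is well defined as a pseudocycle intersection number and hence invariant under the diffeomorphism perturbation used to realize the cycle representing $\operatorname{PD}[\operatorname{pt}]$ by the prescribed point $p$.
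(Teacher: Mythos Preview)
Your approach is essentially the same as the paper's: a nonvanishing Gromov--Witten invariant produces $\tilde J$-spheres of class $A$ through points for every regular $\tilde J$ (by independence of the invariant from $J$), Gromov compactness along $J_n\to\tilde J$ then yields a sphere of area at most $\w(A)$ through any point for any $\tilde J$, and Theorem~\ref{nsq} finishes.

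One minor imprecision worth flagging: the paper only asserts that for each regular $\tilde J$ a \emph{generic} point lies on a $\tilde J$-sphere of class $A$, and then in the compactness step takes generic points $p_n\to p$ together with $\tilde J_n\to\tilde J$. You instead claim that \emph{every} $p$ lies on such a sphere for regular $J$, arguing that one may fix $\{p\}$ and perturb only $X_2\times\cdots\times X_k$ to achieve strong transversality. That last step is not quite what the abstract pseudocycle machinery gives you: the statement in the paper furnishes a generic diffeomorphism of $M^k$, which will move $p$ as well; restricting to diffeomorphisms of the form $\operatorname{id}\times\phi$ amounts to asking that $p$ be a regular value of $\operatorname{ev}_1$, which Sard only guarantees generically. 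This does not affect the correctness of your overall argument---the compactness step you already plan absorbs the discrepancy---but the paper's ``generic $p_n\to p$'' formulation sidesteps the issue more cleanly.
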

\begin{proof}
For any $\tilde{J}\in \mathcal{J}_{\operatorname{reg}}(M, \w),$ the
evaluation map
$$
\operatorname{ev}_{\tilde{J}}^k:\mathcal{M}^*_{A, k}(M,
\tilde{J})\to M^k
$$
defines a pseudocycle. For a generic perturbation
$\tilde{\operatorname{pt}}\times \tilde{X}_2\times \cdots \times
\tilde{X}_k$ of $\operatorname{pt}\times X_2\times \cdots \times
X_k$ such that the evaluation map $\operatorname{ev}_{\tilde{J}}^k$
is transversal to $\tilde{\operatorname{pt}}\times \tilde{X}_2\times
\cdots \times \tilde{X}_k,$ the Gromov-Witten invariant
$\operatorname{GW}_{A,k}^{\tilde{J}}(\operatorname{PD}[\operatorname{pt}],a_2,
\cdots, a_k)$ counts with appropriate orientations the number of
simple $\tilde{J}$-holomorphic spheres of degree $A$ passing through
$\tilde{\operatorname{pt}}, \tilde{X}_2 , \cdots, \tilde{X}_k.$ By
assumption, this number is different from zero and in particular for
a generic point in $M$ there exists a $\tilde{J}$-holomorphic sphere
of degree $A$ passing through it.

Let $\tilde{J}\in \mathcal{J}(M, \w).$ The set of regular almost
complex structure $\mathcal{J}_{\operatorname{reg}}(M, \w)$ is dense
in $\mathcal{J}(M, \w)$ with its $C^{\infty}$-topology. Thus, there
is a sequence of regular almost complex structures $\tilde{J}_n\in
\mathcal{J}(M, \w)$ that $C^\infty$-converges to $\tilde{J}.$ Also,
for any point $p\in M,$ we can find a sequence of points $p_n\in M$
that converges to $p$ and a sequence of $\tilde{J}_n$-holomorphic
spheres
$$
u_n:\cpi \to M
$$
of degree $A$ such that $u_n$ passes through $p_n.$ The Gromov
compactness theorem implies that there exists a
$\tilde{J}$-holomorphic sphere $u:\cpi \to M$ of degree $B$ passing
through $p$ with $0<\w(B)\leq \w(A).$  Therefore, by Theorem
\ref{nsq}
$$
\operatorname{Gwidth}(M, \w)\leq \w(A) \qedhere
$$
\end{proof}

\subsection{The Gromov width and Gromov-Witten
invariants of integral symplectic manifolds}\label{gwcm}

In this section we briefly review the definition of Gromov-Witten
invariants provided by Cieliebak and Monhke for integral symplectic
manifold in terms of Donaldson hypersurfaces. Most of the material
presented in this section is adapted from Cieliebak and Monhke
\cite{cieliebak-mohnke}.

Let $l$ be a nonnegative integer and $\overline{\mathcal{M}}_{l}$ be
the moduli space of equivalent classes of stable nodal curves of
genus zero with $l$-marked points. Let $\operatorname{st}$ be the
\textbf{stabilization map} that makes any nodal curve with
$l$-marked points $\mathbf{z}$ into a stable nodal curve
$\operatorname{st}(\mathbf{z}).$ The stabilization map determines a
holomorphic surjection on the corresponding nodal curves
$$
\operatorname{st}: \Sigma_{\mathbf{z}} \to
\Sigma_{\operatorname{st}{(\mathbf{z})}}
$$
We denote by
$$
\pi:\overline{\mathcal{M}}_{l+1} \to \overline{\mathcal{M}}_{l}
$$
the map that forgets the last marked point and stabilizes the
result.

Let $(M, \w)$ be a symplectic manifold. A \textbf{domain-dependent
almost complex structure} is a smooth function
\begin{align*}
K:\overline{\mathcal{M}}_{l+1} &\to
\mathcal{J}(M, \w) \\
[\mathbf{z}] &\mapsto K_{[\mathbf{z}]}
\end{align*}
A \textbf{domain-independent almost complex structure} is an almost
complex structure $J\in \mathcal{J}(M, \w)$ considered as a constant
function
\begin{align*}
J:\overline{\mathcal{M}}_{l+1} &\to
\mathcal{J}(M, \w) \\
[\mathbf{z}] &\mapsto J
\end{align*}
We interpret $K\in C^{\infty}(\overline{\mathcal{M}}_{l+1},
\mathcal{J}(M, \w))$ in terms of the forgetful and stabilization
maps as follows: for any nodal curve $\mathbf{z}$ with $l$-marked
points, the fiber $\pi^{-1}([\operatorname{st}(\mathbf{z})])$ can be
identified with the nodal curve
$\Sigma_{\operatorname{st}(\mathbf{z})}$ by parameterizing
$\pi^{-1}([\operatorname{st}(\mathbf{z})])$  via the position of the
extra marked point. The restriction of  $K$ to
$\pi^{-1}([\operatorname{st}(\mathbf{z})])$ yields a map
$$
K_{[\mathbf{z}]}^{\operatorname{st}}:\Sigma_{\operatorname{st}(\mathbf{z})}\to
\mathcal{J}(M, \w)
$$
We denote by $\mathcal{J}_{l+1}(M, \w)$ the set of domain-dependent
almost complex structures that are \textbf{coherent} in the sense of
Cielibak and Mohnke \cite{cieliebak-mohnke}.

Let $K\in \mathcal{J}_{l+1}(M, \w).$ A smooth curve $(u,
\mathbf{z})$ with $l$-marked points is \textbf{$K$-holomorphic} if
for every $z\in \Sigma_{\mathbf{z}},$
$$
du(z)+K_{[\mathbf{z}]}^{\operatorname{st}}(\operatorname{st}(z),
u(z))\circ du(z)\circ j_{\operatorname{st}(\mathbf{z})}=0,
$$
where $j_{\operatorname{st}(\mathbf{z})}$ denotes the standard
complex structure defined on
$\Sigma_{\operatorname{st}(\mathbf{z})}.$ For $A\in H_2(M; \Z),$ we
denote by $\mathcal{M}_{A, l}(M, K)$ the moduli space of equivalence
classes of $K$-holomorphic spheres of degree $A$ with $l$-marked
points and by $\operatorname{ev}_K^l:\mathcal{M}_{A, l}(M, K)\to
M^l$ the corresponding evaluation map.

Let us assume now that  $(M, \w)$ is a symplectic manifold such that
the symplectic form $\w$ represents a cohomology class $[\w]$ with
integer coefficients. We call such symplectic manifolds
\textbf{integral}. A codimension two submanifold $Y\subset M$
(\textbf{a hypersurface}) has \textbf{degree $D$} if it represents
the homology class Poincar\'e dual to $D[\w].$

A result of Donaldson states that given an almost complex structure
$J\in \mathcal{J}(M, \w)$ there exists a hypersurface $Y\subset M$
\textit{nearly} $J$-holomorphic in the sense that the
\textit{K\"{a}hler angle} of $Y$ with respect to $J$ is sufficiently
small if the degree of $Y$ is sufficiently large. This in particular
implies that if the degree of $Y$ is sufficiently large, for any
$\epsilon >0$ one can find $\tilde{J}\in \mathcal{J}(M, \w)$ such
that $\|J-\tilde{J}\|_{C^0}<\epsilon$ and $Y$ is
$\tilde{J}$-holomorphic (see e.g. Donaldson \cite{donaldson}). In
this case, we call $Y$ a \textbf{Donaldson hypersurface} and $(J,
Y)$ a \textbf{Donaldson pair}.

Let $(J, Y)$ be a Donaldson pair and $D$ be the degree of $Y.$ For
$\epsilon>0$ and $l\in \Z_{\geq 0},$ we denote by
$\mathcal{J}_{l+1}(M, \w, Y, \epsilon)$ the set of domain-dependent
almost complex structure $K\in \mathcal{J}_{l+1}(M, \w)$ that are
$\epsilon$-close to $J$ in the $C^0$-norm and leave $TY$ invariant.

Let $k\in \Z_{\geq 0}, A\in H_2(M; \Z), l:=D\w(A)$ and $K\in
\mathcal{J}_{l+1}(M, \w, Y, \epsilon).$ The domain-dependent almost
complex structure $K$ induces an element $\pi_k^*K\in
\mathcal{J}_{k+l+1}(M, \w)$ by composition with the map
$$
\pi_k:\overline{\mathcal{M}}_{k+l+1}\to \overline{\mathcal{M}}_{l+1}
$$
that forgets the last $k$ marked points and stabilizes the result.
We denote by $\mathcal{M}_{A, k+l}(M, K, Y)$ the moduli space of
equivalence classes of $\pi_k^*K$-holomorphic spheres of degree $A$
with $(k+l)$-marked points that map the last $l$ marked points to
$Y.$ We denote by $\operatorname{ev}^k_K:\mathcal{M}_{A, k+l}(M, K,
Y)\to M^k$ the evaluation map $ [u; (\Sigma; z_1, \cdots, z_k,
z_{k+1}, \cdots, z_{k+l})]\mapsto (u(z_1), \cdots, u(z_k)).$

\begin{Theorem}[Cieliebak and Mohnke
\cite{cieliebak-mohnke}]\label{cielibakmohnke} Let $(M, \w)$ be an
integral symplectic manifold and  $(J, Y)$ be a Donaldson pair
consisting of an almost complex structure $J\in \mathcal{J}(M, \w)$
and a Donaldson hypersurface $Y\subset M$ of degree $D$ sufficienlty
large. Let $A\in H_2(M; \Z)$ and $l=D\w(A).$ For $\epsilon>0$ small
enough there exists a subset
$\mathcal{J}^{\operatorname{reg}}_{l+1}(M, \w, Y, \epsilon)\subset
\mathcal{J}_{l+1}(M, \w, Y, \epsilon) $ of the second category such
that for every $k\in \Z_{\geq 0}$ and $K\in
\mathcal{J}^{\operatorname{reg}}_{l+1}(M, \w, Y, \epsilon)$ the
evaluation map
\begin{align*}
\operatorname{ev}^k_K:\mathcal{M}_{A, k+l}(M, K, Y)&\to M^k
\end{align*}
defines a pseudocycle. The bordism class of this pseudocycle is
independent of $(J, K, Y).$
\end{Theorem}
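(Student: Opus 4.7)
The plan is to follow the Donaldson hypersurface strategy: use the $l$ extra marked points to "rigidify" bubbling and restore transversality at the cost of working with domain-dependent almost complex structures. First, I would unpack the count $l = D\omega(A)$. Since $Y$ represents the class $\mathrm{PD}(D[\omega])$, any closed $J$-holomorphic sphere $u$ of degree $A$ intersects $Y$ algebraically in $D\omega(A)$ points; the hypothesis that $K$ is $\epsilon$-close to $J$ in $C^0$ and leaves $TY$ invariant, combined with positivity of intersections (for $\epsilon$ small enough relative to the Kähler angle of $Y$), guarantees that every nonconstant $K$-holomorphic sphere of degree $A$ meets $Y$ transversally in exactly $l$ points. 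The last $l$ marked points then serve as labels for these intersections, giving an étale map of degree $l!$ between the "decorated" moduli space and the usual one.

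Next I would establish transversality by a Sard--Smale argument on the universal moduli space inside $\mathcal{J}_{l+1}(M, \omega, Y, \epsilon)$. The coherence condition built into Cieliebak--Mohnke's definition ensures that perturbations of $K$ vary freely at any unmarked point of any simple domain, so the linearization of $\bar\partial_K$ is surjective on a comeager set $\mathcal{J}^{\mathrm{reg}}_{l+1}(M, \omega, Y, \epsilon)$. For such $K$, the moduli $\mathcal{M}_{A, k+l}(M, K, Y)$ is a smooth oriented manifold of the expected dimension $\dim M + 2c_1(A) + 2(k+l) - 6 - 2l = \dim M + 2c_1(A) + 2k - 6$, since each of the $l$ incidence conditions at $Y$ is of real codimension $2$.

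For the pseudocycle property I would invoke Gromov compactness: any sequence in $\mathcal{M}_{A,k+l}(M,K,Y)$ converges to a stable map whose components carry classes $A_1,\dots,A_r$ summing to $A$. Because $K$ is coherent and $C^0$-close to the $J$ for which $Y$ is (nearly) pseudoholomorphic, every nonghost component still intersects $Y$ in exactly $D\omega(A_i)$ points, and these distribute the $l$ marked points across the components without choice. A dimension count shows that strata involving bubbling or ghost components have real dimension at most $\dim M + 2c_1(A) + 2k - 8$, which is codimension $\geq 2$; this is precisely the pseudocycle condition for $\operatorname{ev}^k_K$, after verifying compactness of the closure of its image.

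The hardest step, and the main obstacle, is independence of $(J, K, Y)$. For two Donaldson pairs $(J_0, Y_0), (J_1, Y_1)$ of possibly different degrees $D_0, D_1$, I would take a Donaldson hypersurface $Y$ of much larger degree $D$ containing (or transverse to) $Y_0 \cup Y_1$ and interpolate the pairs through a one-parameter family $(J_t, Y_t)$, using parametric Sard--Smale to extract a comeager set of coherent paths $K_t$ for which the parametric moduli space is a smooth cobordism between the two pseudocycles. To change the degree one shows multiplicativity: passing from $Y$ of degree $D$ to $Y'$ of degree $D'=mD$ multiplies $l$ by $m$ but also divides by the combinatorial factor counting distributions of marked points among the $m$ sheets; this factor is exactly the ratio needed so that the resulting intersection number is unchanged. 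Together these two comparisons give the asserted independence of the bordism class.
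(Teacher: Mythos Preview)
The paper does not prove this theorem at all: it is stated with the attribution ``[Cieliebak and Mohnke \cite{cieliebak-mohnke}]'' and no proof environment follows it. The paper simply quotes the result and then uses it to define the Gromov--Witten invariant $\operatorname{GW}^{\operatorname{cm}}_{A,k}$. So there is no ``paper's own proof'' to compare your proposal against.

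That said, your sketch is a reasonable high-level outline of the actual Cieliebak--Mohnke argument: the intersection count $l=D\omega(A)$, positivity of intersections with $Y$ for $K$ close to $J$, the Sard--Smale transversality over the space of coherent domain-dependent $K$, the Gromov-compactness stratification showing the boundary strata have codimension at least two, and the cobordism argument for independence of $(J,K,Y)$ are all genuine ingredients of their proof. Two caveats. First, the step you flag as hardest---independence of $Y$---is indeed delicate in the original paper, and your description of it (interpolating through a hypersurface of larger degree, plus a multiplicativity/combinatorial factor argument) is more schematic than what Cieliebak--Mohnke actually do; their argument requires a careful analysis of nodal degenerations and of how the stabilization map interacts with the extra marked points. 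Second, the compactness and codimension-two claims rely on ruling out certain bad configurations (e.g.\ components mapped entirely into $Y$, or ghost bubbles carrying marked points in a way that destroys transversality), and this is where the ``$D$ sufficiently large'' and ``$\epsilon$ sufficiently small'' hypotheses do real work; your sketch gestures at this but does not engage with it. For the purposes of the present paper none of this matters, since the theorem is imported wholesale from \cite{cieliebak-mohnke}.
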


The previous theorem allows us to define for an arbitrary integral
symplectic manifolds its Gromov-Witten invariants. Let $(J, K, Y)$
and
$$
\operatorname{ev}^k_K:\mathcal{M}_{A, k+l}(M, K, Y)\to M^k
$$
be as in the previous theorem. Let $a_1, \cdots, a_k$ be cohomology
classes Poincar\'e dual to the fundamental classes of cycles $X_1,
\cdots, X_k\subset M$ such that the pseudocycle
$\operatorname{ev}^k_K:\mathcal{M}_{A, k+l}(M, K, Y)\to M^k$ is
transversal to $X_1\times \cdots \times X_k.$ \textbf{The
Gromov-Witten invariant defined by Cieliebak and Monhke} is the
intersection number
\begin{align*}
\operatorname{GW}^{\,\operatorname{cm}}_{A, k}(a_1,\ldots, a_k):=
\dfrac{1}{l!}\sharp\operatorname{ev}^k_K\pitchfork(X_1\times
\cdots\times X_k)
\end{align*}
The Gromov-Witten invariant
$\operatorname{GW}^{\,\operatorname{cm}}_{A, k}$ is well defined. It
does not depend on the choice of $(J, Y, K)$ and of generic
perturbation of $X_1, \cdots, X_k\subset M.$

The Gromov-Witten defined by Cieliebak and Mohnke coincides with the
Gromov-Witten invariant defined in the previous section for
(integral) semipositive symplectic manifolds
\cite{cieliebak-mohnke}.

Now we show that for an integral symplectic manifold $(M, \w),$ the
Gromov-Witten invariant $\operatorname{GW}_{A,
k}^{\operatorname{cm}}$ coincides with the Gromov-Witten invariant
$\operatorname{GW}_{A, k}^{J}$ when $J$ is a regular
domain-independent almost complex structure and $A$ is a
$J$-indecomposable spherical class.

\begin{lemma} Let $(M, \w)$ be a compact integral symplectic
manifold. Let $A\in H_2(M; \Z)$ and $J\in
\mathcal{J}_{\operatorname{reg}}(A).$ Assume that $A$ is a
$J$-indecomposable class. Then for any $k\in \Z_{\geq 1},$ the
Gromov-Witten invariant $\operatorname{GW}_{A, k}^{J}$ coincides
with the Gromov-Witten invariant $\operatorname{GW}_{A,
k}^{\operatorname{cm}}$
\end{lemma}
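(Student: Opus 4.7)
The plan is to exhibit the Cieliebak--Mohnke pseudocycle as an $l!$-fold cover of the standard $J$-pseudocycle, where $l:=D\w(A)$, so that the equality follows from the $1/l!$ normalization in the Cieliebak--Mohnke definition.

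First I would arrange a convenient choice of data. Because regularity and $J$-indecomposability of $A$ are both open conditions on $J\in\mathcal{J}(M,\w)$, after an arbitrarily small $C^0$-perturbation I may assume that $(J,Y)$ is a genuine Donaldson pair with a Donaldson hypersurface $Y$ of degree $D$ large enough for Theorem \ref{cielibakmohnke} to apply. Such a small perturbation changes neither the moduli space $\mathcal{M}_{A,k}^*(M,J)$ up to cobordism nor the bordism class of its evaluation, so $\operatorname{GW}_{A,k}^{J}$ is unchanged (a short path of regular almost complex structures yields a cobordism of pseudocycles, the $J$-indecomposability persisting along the path). Then I pick $K\in \mathcal{J}^{\operatorname{reg}}_{l+1}(M,\w,Y,\epsilon)$ that is a coherent domain-dependent perturbation of the constant structure $J$, with $\epsilon$ so small that Gromov compactness, together with $J$-indecomposability, forbids any bubbling of $\pi_k^*K$-holomorphic representatives of class $A$.

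The key step is the analysis of the forgetful map
$$
\pi:\mathcal{M}_{A,k+l}(M,K,Y) \longrightarrow \mathcal{M}_{A,k}^{*}(M,J)
$$
which drops the last $l$ marked points and passes to the limit $K\to J$. Since $Y$ is $K$-holomorphic and every $\pi_k^*K$-holomorphic representative of $A$ is a simple sphere, positivity of intersection forces each such sphere $u$ to meet $Y$ in points whose intersection multiplicities sum to $[Y]\cdot A=D\w(A)=l$. For a generic choice of $K$ these $l$ intersection points are distinct and $u$ is transverse to $Y$ at each of them, so the fiber of $\pi$ over a generic element of $\mathcal{M}_{A,k}^*(M,J)$ consists of the $l!$ orderings of those intersection points. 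Since the evaluation maps factor as $\operatorname{ev}^k_K=\operatorname{ev}_{J}^k\circ\pi$ on the first $k$ marked points, for a generic representative $X_1\times\cdots\times X_k$ of $(\operatorname{PD}[\operatorname{pt}],a_2,\ldots,a_k)$ one obtains
$$
\sharp\,\operatorname{ev}^k_K\pitchfork(X_1\times\cdots\times X_k)\;=\;l!\cdot\sharp\,\operatorname{ev}_{J}^k\pitchfork(X_1\times\cdots\times X_k),
$$
and dividing by $l!$ yields $\operatorname{GW}^{\operatorname{cm}}_{A,k}(a_1,\ldots,a_k)=\operatorname{GW}^{J}_{A,k}(a_1,\ldots,a_k)$.

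The main obstacle I anticipate is the transversality bookkeeping in the key step: one must simultaneously arrange that (i) $\mathcal{M}_{A,k+l}(M,K,Y)$ is smoothly cut out, (ii) every representative sphere meets $Y$ transversely in exactly $l$ distinct points (no tangencies), and (iii) no Gromov limit of $\pi_k^*K$-holomorphic representatives can split off a ghost or bubble component carrying some of the last $l$ marked points into $Y$. Each condition is open-dense on $K$ individually; combining them rigorously requires a Sard--Smale argument within the Cieliebak--Mohnke coherent perturbation framework, using $J$-indecomposability decisively in ruling out the degenerations in (iii) as $K\to J$. This is the technical heart of the proof.
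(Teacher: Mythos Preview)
Your overall strategy is sound and close to the paper's, but the central map
\[
\pi:\mathcal{M}_{A,k+l}(M,K,Y) \longrightarrow \mathcal{M}_{A,k}^{*}(M,J)
\]
is not well-defined as you describe it. The domain-dependent structure $\pi_k^*K$ depends on the positions of the $l$ auxiliary marked points; if you forget those points, you no longer have an equation the curve satisfies, and you certainly do not get a $J$-holomorphic curve just by ``passing to the limit $K\to J$''. A $\pi_k^*K$-holomorphic sphere is simply not $J$-holomorphic, so there is no literal forgetful map landing in $\mathcal{M}_{A,k}^*(M,J)$.

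The paper resolves this by separating your single step into two. First, it shows that for $K$ sufficiently close to $J$ and joined to $J$ by a path of domain-dependent structures along which $A$ remains indecomposable, the pseudocycles
\[
\operatorname{ev}_{\pi_k^*K}^{k+l}:\mathcal{M}_{A,k+l}(M,\pi_k^*K)\to M^{k+l}
\quad\text{and}\quad
\operatorname{ev}_{J}^{k+l}:\mathcal{M}_{A,k+l}(M,J)\to M^{k+l}
\]
are cobordant; this is the precise meaning of your ``limit $K\to J$''. Only after passing to the domain-\emph{independent} structure $J$ does one have an honest forgetful map, and the paper then invokes the divisor axiom to obtain
\[
\sharp\,\operatorname{ev}_{J}^{k+l}\pitchfork(X_1\times\cdots\times X_k\times Y^{l})
\;=\;l!\cdot\sharp\,\operatorname{ev}_{J}^{k}\pitchfork(X_1\times\cdots\times X_k),
\]
which is exactly your $l!$-cover/positivity-of-intersection argument, but now legitimately applied to $J$-curves. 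Your intuition and your list of technical obstacles are on target; what is missing is the recognition that the cobordism to $J$ and the forgetful/divisor step must be performed in sequence rather than fused into one map.
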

\begin{proof}
Let $\tilde{k}\in \Z_{\geq 1}.$ There exists a subset
$\mathcal{J}_{\tilde{k}+1}^{\operatorname{reg}}(A)\subset
\mathcal{J}_{\tilde{k}+1}(M, \w)$ of the second category such that
for every $K\in \mathcal{J}_{\tilde{k}+1}^{\operatorname{reg}}(A),$
the moduli space of $K$-holomorphic spheres
$\mathcal{M}_{A,\tilde{k}}(M, K)$ is a smooth manifold of dimension
$$
\dim{M}+2c_1(A)+2\tilde{k}-6,
$$
(see e.g. Cieliebak and Mohnke \cite{cieliebak-mohnke}[Corollary
5.8]). A standard transversality argument implies that if $K_0, K_1
\in \mathcal{J}_{\tilde{k}+1}^{\operatorname{reg}}(A)$ can be joined
by a path of domain-dependent almost complex structures, we can find
a path $\{K_t\}_{t\in [0, 1]} \subset \mathcal{J}_{\tilde{k}+1}(M,
\w)$ that joins $K_0$ with $K_1$ such that the moduli space
$$
\mathcal{W}_{A, \tilde{k}}(M, \{K_t\}):=\bigcup_{t\in [0,
1]}\mathcal{M}_{A,\tilde{k}}(M, K_t)
$$
is a smooth oriented manifold with boundary
$$
\partial(\mathcal{W}_{A, \tilde{k}}(M, \{K_t\}))=\mathcal{M}_{A, \tilde{k}}(M,
K_1)-\mathcal{M}_{A, \tilde{k}}(M, K_0)
$$
We claim that if $K\in
\mathcal{J}_{\tilde{k}+1}^{\operatorname{reg}}(A)$ is close enough
to $J$ and can be joined to $J$ by a path of domain-dependent almost
complex structures close enough to $J,$ the evaluation maps
$$
\operatorname{ev}_{K}^{\tilde{k}}:\mathcal{M}_{A, \tilde{k}}(M, K)
\to M^{\tilde{k}},\,\,
\operatorname{ev}_{J}^{\tilde{k}}:\mathcal{M}_{A, \tilde{k}}(M, J)
\to M^{\tilde{k}}
$$
define cobordant pseudocycles: first, for any almost complex
structure $K\in \mathcal{J}_{\tilde{k}+1}(M, \w),$ we say that a
homology class $B\in H_2(M)$ is $K$-indecomposable if it can not be
decomposed as a sum $B=B_1+\cdots +B_l,$ where $l\geq 2$ and each
$B_i$ has a nonconstant spherical $K$-holomorphic representative.

The set
$$
\mathcal{J}_{A, \tilde{k}+1}^{\operatorname{ind}}:=\{K\in
\mathcal{J}_{\tilde{k}+1}(M, \w): A \text{ is $K$-indecomposable}\}
$$
is an open subset of $\mathcal{J}_{\tilde{k}+1}(M, \w).$ If $K\in
\mathcal{J}_{\tilde{k}+1}^{\operatorname{reg}}(A)$ is close enough
to $J$ and satisfies the assumption, we can assume that $A$ is
$K$-indecomposable and that we can find a path $\{K_t\}_{t\in [0,
1]} \subset \mathcal{J}_{A, \tilde{k}+1}^{\operatorname{ind}}$ that
joins $K$ with $J$ such that $\mathcal{W}_{A, \tilde{k}}(M,
\{K_t\})$ is a smooth oriented manifold with boundary
$\partial{\mathcal{W}_{A, \tilde{k}}(M, \{K_t\})}=\mathcal{M}_{A,
\tilde{k}}(M, K)-\mathcal{M}_{A, \tilde{k}}(M, J).$

The indecomposability assumption made on $A$ implies that the
evaluation maps
$$
\operatorname{ev}_{K}^{\tilde{k}}:\mathcal{M}_{A, \tilde{k}}(M, K)
\to M^{\tilde{k}},\,\,
\operatorname{ev}_{J}^{\tilde{k}}:\mathcal{M}_{A, \tilde{k}}(M, J)
\to M^{\tilde{k}}
$$
are pseudocycles and that the evaluation map
$$
\operatorname{ev}_{K_t}^{\tilde{k}}:\mathcal{W}_{A, \tilde{k}}(M,
\{K_t\}) \to M^{\tilde{k}}
$$
defines a cobordism between the pseudocycles
$\operatorname{ev}_{K}^{\tilde{k}}$ and
$\operatorname{ev}_{J}^{\tilde{k}}$. This proves the claim.

Now, let $Y$ be a Donaldson hypersurface of degree $D$ sufficiently
large such that $(J, Y)$ is a Donaldson pair. Let $k\in \Z_{\geq 0}$
and $l=D\w(A).$ For $\epsilon>0$ small enough, let $K\in
\mathcal{J}_{l+1}^{\operatorname{reg}}(M, \w, Y, \epsilon)\subset
\mathcal{J}_{l+1}^{\operatorname{reg}}(A)$ be such that the
evaluation map
$$
\operatorname{ev}^k_K:\mathcal{M}_{A,k+l}(M, K, Y)\to M^k
$$
defines a pseudocycle.

If $K$ is close enough to $J,$ we can join $K$ with $J$ by a path of
domain-dependent almost complex structures close enough to $J$ (see
e.g. Cieliebak and Monhke \cite{cieliebak-mohnke}[Definition 9.4,
Proposition 10.1]). Thus the evaluation maps
$$
\operatorname{ev}^{k+l}_{\pi_l^*K}: \mathcal{M}_{A,k+l}(M,
\pi_l^*K)\to M^{k+l}, \,\,\, \operatorname{ev}^{k+l}_{J}:
\mathcal{M}_{A,k+l}(M, J)\to M^{k+l}
$$
are cobordant pseudocycles. As a consequence, for cycles in general
position $X_1, \cdots, X_k\subset M$ whose fundamental classes are
Poincar\'e dual to cohomology classes $a_1, \cdots, a_k\in H^*(M;
\Z)$
\begin{align*}
\operatorname{GW}^{\operatorname{cm}}_{A, k}(a_1, \cdots, a_k)&:=
\dfrac{1}{l!}\sharp\operatorname{ev}_{\pi_l^*K}^{k+l}\pitchfork (X_1
\times \cdots
\times X_k\times Y \times \cdots \times Y)\\
&=\dfrac{1}{l!}\sharp\operatorname{ev}_{J}^{k+l}\pitchfork (X_1
\times \cdots \times X_k\times Y \times \cdots \times Y)
\end{align*}
Finally, the divisor axiom implies that
\begin{align*}
\operatorname{GW}^{J}_{A, k}(a_1, \cdots, a_k)&:=
\sharp\operatorname{ev}_{J}^{k}\pitchfork (X_1 \times \cdots \times
X_k)\\&=\dfrac{1}{l!}\sharp\operatorname{ev}_{J}^{k+l}\pitchfork
(X_1 \times \cdots \times X_k\times Y \times \cdots \times Y),
\end{align*}
and we are done.
\end{proof}
For our purposes, the following two results would be enough to bound
the Gromov width of a symplectic manifold.
\begin{lemma}\label{nsq3}
Let $(M, \w)$ be a compact integral symplectic manifold. If there
exist a spherical class $A$ and cohomology classes $a_2, \cdots,
a_k$ such that
$$
\operatorname{GW}^{\operatorname{cm}}_{A,
k}(\operatorname{PD}[\operatorname{pt}], a_2,\ldots, a_k)\ne 0,
$$
then for any almost complex structure $\tilde{J}\in \mathcal{J}(M,
\w)$ and for any point $p\in M$ there exists a
$\tilde{J}$-holomorphic sphere of degree $B$ passing through $p$
with $0<\w(B)\leq \w(A)$ .
\end{lemma}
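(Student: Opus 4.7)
The plan is to adapt the argument of Theorem \ref{nsq2} to the integral, Cieliebak--Mohnke setting, replacing the regular domain-independent almost complex structure used there with a coherent domain-dependent structure $K$ that is $C^0$-close to $\tilde J$ and arises from a Donaldson pair. Given $\tilde J\in\mathcal{J}(M,\w)$ and $\epsilon>0$, I would first invoke Donaldson's theorem to produce an almost complex structure $\tilde J'$ with $\|\tilde J'-\tilde J\|_{C^0}<\epsilon$ and a Donaldson hypersurface $Y\subset M$ of sufficiently large degree $D$ making $(\tilde J',Y)$ a Donaldson pair. Setting $l=D\w(A)$, Theorem \ref{cielibakmohnke} then yields a regular coherent $K\in\mathcal{J}^{\operatorname{reg}}_{l+1}(M,\w,Y,\epsilon)$ for which the evaluation map $\operatorname{ev}_K^k:\mathcal{M}_{A,k+l}(M,K,Y)\to M^k$ is a pseudocycle in the bordism class computing $\operatorname{GW}^{\operatorname{cm}}_{A,k}$.

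Since $\operatorname{GW}^{\operatorname{cm}}_{A,k}(\operatorname{PD}[\operatorname{pt}],a_2,\ldots,a_k)\ne 0$, standard pseudocycle theory guarantees that, for a residual set of $q\in M$ and generic perturbations $\tilde X_i$ of the $X_i$, the pseudocycle $\operatorname{ev}_K^k$ meets $\{q\}\times\tilde X_2\times\cdots\times\tilde X_k$ transversely with nonzero intersection number; in particular there exists a $K$-holomorphic sphere $u:\cpi\to M$ of degree $A$ (with $k+l$ marked points, the last $l$ sent to $Y$) whose first marked point maps to $q$. To reach an arbitrary point $p\in M$ with respect to the original $\tilde J$, I would pick sequences $\epsilon_n\downarrow 0$, Donaldson pairs $(\tilde J'_n,Y_n)$, and regular coherent $K_n$ (constructed as above with parameter $\epsilon_n$) with $K_n\to\tilde J$ in $C^0$, together with generic points $p_n\to p$, obtaining for each $n$ a $K_n$-holomorphic sphere $u_n:\cpi\to M$ of degree $A$ whose first marked point lands at $p_n$.

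Finally I would invoke Gromov compactness for the sequence $(u_n)$: the maps have fixed energy $\w(A)$ and the structures $K_n$ converge in $C^0$ to the domain-independent limit $\tilde J$, so after passing to a subsequence one obtains a stable $\tilde J$-holomorphic bubble tree $\bar u$ of total degree $A$ whose image contains $p$. At least one irreducible component of $\bar u$ is a nonconstant $\tilde J$-holomorphic sphere of some degree $B$ with $0<\w(B)\leq\w(A)$ that passes through $p$, which is exactly the conclusion required. The technical point I expect to be most delicate is this compactness step: because the $K_n$ are coherent \emph{domain-dependent} structures, one must justify that their $C^0$-convergence to the domain-independent $\tilde J$ suffices for the Gromov limit to be honestly $\tilde J$-holomorphic on every non-ghost component and to carry the full homology class $A$. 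The needed compactness statement is implicit in Cieliebak--Mohnke, but keeping track of both the convergence of the almost complex structures and the location of the distinguished marked point is the real content of the argument.
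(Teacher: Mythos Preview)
Your proposal is correct and follows essentially the same approach as the paper's proof: produce, via a Donaldson pair close to $\tilde J$, a regular coherent domain-dependent $K$ for which the nonvanishing of $\operatorname{GW}^{\operatorname{cm}}_{A,k}$ forces a $K$-holomorphic sphere of degree $A$ through a generic point, and then pass to the limit via Gromov compactness to obtain a $\tilde J$-holomorphic sphere of degree $B$ with $0<\w(B)\le\w(A)$ through any given $p$. The paper's proof is much terser (it simply says ``the definition of $\operatorname{GW}^{\operatorname{cm}}_{A,k}$ implies\ldots'' and then invokes Gromov compactness), whereas you have spelled out the Donaldson pair construction, the diagonal sequence $K_n\to\tilde J$, and the bubble-tree extraction; you also correctly flag the one genuinely delicate point, namely that Gromov compactness for coherent domain-dependent structures $C^0$-converging to a domain-independent limit must be invoked from Cieliebak--Mohnke rather than from the standard theory.
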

\begin{proof}
Assume that there exist a spherical class $A\in H_2(M; \Z)$ and
cohomology classes $a_2, \cdots, a_k$ Poincar\'e dual to the
fundamental classes of cycles $X_2, \cdots, X_k$ in general position
such that
$$
\operatorname{GW}^{\operatorname{cm}}_{A,
k}(\operatorname{PD}[\operatorname{pt}], a_2,\ldots, a_k)\ne 0
$$
The definition of the Gromov-Witten invariant
$\operatorname{GW}^{\operatorname{cm}}_{A, k}$ implies that for any
almost complex structure $\tilde{J}\in \mathcal{J}(M, \w)$ we can
construct a domain-dependent almost complex structure $K$
sufficiently close to $\tilde{J}$ such that for a generic point
$p\in M$ there exists a $K$-holomorphic sphere
$$
u:\cpi \to M
$$
of degree $A$ passing through $p.$ The Gromov compactness theorem
implies that for any point $p\in M$ there exists a
$\tilde{J}$-holomorphic curve of degree $B$ passing through $p$ with
$0<\w(B)\leq \w(A).$
\end{proof}
\begin{Theorem}\label{nsq4}
Let $(M, \w)$ be a compact symplectic manifold such that the
symplectic form $\w$ represents a rational cohomology class $[\w]\in
H^2(M; \Q).$ Let $A\in H_2(M; \Z)$ and $J\in
\mathcal{J}_{\operatorname{reg}}(A).$ Assume that $A$ is a
$J$-indecomposable class. If there exist cohomology classes $a_2,
\cdots, a_k\in H^*(M; \Z)$ such that
$$
\operatorname{GW}^{J}_{A, k}(\operatorname{PD}[\operatorname{pt}],
a_2,\ldots, a_k)\ne 0,
$$
then for any almost complex structure $\tilde{J}\in \mathcal{J}(M,
\w)$ and for any point $p\in M$ there exists a
$\tilde{J}$-holomorphic sphere of degree $B$ passing through $p$
with $0<\w(B)\leq \w(A).$ Moreover,
$$
\operatorname{Gwidth}(M, \w)\leq \w(A).
$$
\end{Theorem}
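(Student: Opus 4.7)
The plan is to reduce to the integral case by rescaling, then combine the preceding lemma (identifying $\operatorname{GW}^J$ with $\operatorname{GW}^{\operatorname{cm}}$ under the indecomposability hypothesis), Lemma \ref{nsq3}, and Theorem \ref{nsq}. Since $[\w] \in H^2(M; \Q)$, there exists a positive integer $N$ such that $N[\w] \in H^2(M; \Z)$, so $(M, N\w)$ is a compact integral symplectic manifold. The first task is to observe that this rescaling is completely harmless for the hypothesis: a smooth endomorphism $J$ of $TM$ is compatible with $\w$ iff it is compatible with $N\w$, so $\mathcal{J}(M, \w) = \mathcal{J}(M, N\w)$; the $\overline{\partial}_J$-operator and hence the moduli spaces $\mathcal{M}^*_{A,k}(M, J)$, the set $\mathcal{J}_{\operatorname{reg}}(A)$, and the notion of $J$-indecomposability depend only on $J$ and not on the scale of $\w$. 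Consequently the pseudocycle $\operatorname{ev}_J^k$ and therefore the Gromov--Witten invariant $\operatorname{GW}_{A,k}^J$ are identical for $(M,\w)$ and $(M, N\w)$.

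Next I would apply the previous lemma to the integral symplectic manifold $(M, N\w)$: since $J \in \mathcal{J}_{\operatorname{reg}}(A)$ and $A$ is $J$-indecomposable, we have
\[
\operatorname{GW}^{\operatorname{cm}}_{A,k}(\operatorname{PD}[\operatorname{pt}], a_2, \ldots, a_k) = \operatorname{GW}^{J}_{A,k}(\operatorname{PD}[\operatorname{pt}], a_2, \ldots, a_k) \ne 0,
\]
where the Cieliebak--Mohnke invariant is computed with respect to the symplectic form $N\w$. Lemma \ref{nsq3} applied to $(M, N\w)$ then yields: for every $\tilde{J} \in \mathcal{J}(M, N\w) = \mathcal{J}(M, \w)$ and every $p \in M$, there is a $\tilde{J}$-holomorphic sphere of some degree $B$ through $p$ satisfying $0 < (N\w)(B) \le (N\w)(A)$. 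Dividing by $N$, this is the same as $0 < \w(B) \le \w(A)$, which is precisely the first conclusion of the theorem.

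Finally, with this existence statement in hand, Theorem \ref{nsq} applied directly to $(M, \w)$ gives $\operatorname{Gwidth}(M, \w) \le \w(A)$, which is the second conclusion.

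There is no serious obstacle here; the only thing to be careful about is the invariance of all the $J$-holomorphic data under rescaling of $\w$ by a positive constant, so that the hypothesis on $\operatorname{GW}^J_{A,k}$ for $(M,\w)$ translates without loss into the corresponding hypothesis for the integral manifold $(M, N\w)$, where the Cieliebak--Mohnke machinery of Section \ref{gwcm} is available. Once this is noted, the result is a clean concatenation of the lemma identifying $\operatorname{GW}^J$ with $\operatorname{GW}^{\operatorname{cm}}$, Lemma \ref{nsq3}, and Gromov's Theorem \ref{nsq}.
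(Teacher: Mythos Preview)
Your proof is correct and follows essentially the same approach as the paper: rescale to an integral symplectic form, invoke the preceding lemma and Lemma \ref{nsq3} on the rescaled manifold, then conclude via Theorem \ref{nsq}. The paper is slightly terser (it applies Theorem \ref{nsq} to $(M,c\w)$ and implicitly uses $\operatorname{Gwidth}(M,c\w)=c\operatorname{Gwidth}(M,\w)$), but your explicit check that the $J$-holomorphic data and the Gromov--Witten invariant are invariant under positive rescaling of $\w$ is a welcome clarification.
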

\begin{proof}
Let $c$ be a positive integer such that $[c\w]\in H^2(M; \Z).$ The
first part of the theorem follows from the previous two lemmas when
we apply them to the integral symplectic manifold $(M, c\w).$

Finally, Theorem \ref{nsq} implies the following upper bound for the
Gromov width of the integral symplectic manifold $(M, c\w)$
$$
\operatorname{Gwidth}(M, c\w)\leq c\w(A),
$$
and we are done.
\end{proof}
\begin{Remark}
For a general compact symplectic manifold $(M, \w)$ more involved
constructions are needed to define its Gromov-Witten invariants. In
such constructions, usually one associates to the moduli space of
$J$-holomorphic curves $\overline{\mathcal{M}}_{A, k}(M, J)$ a
\textit{virtual fundamental class} $[\overline{\mathcal{M}}_{A,
k}(M,J)]_{\operatorname{virt}}$ with \textit{rational} coefficients.
The virtual fundamental class
$[\overline{\mathcal{M}}_{A,k}(M,J)]_{\operatorname{virt}}$ is
usually well defined and independent of the almost complex
structure, and Gromov-Witten invariants on $(M, \w)$ are defined by
integrating over this fundamental class (see for example B. Chen,
A.M. Li and B. L. Wang \cite{chenliwang},  Fukaya and Ono
\cite{fukayaono}, Fukaya, Ohta, Oh and Ono \cite{fooo2},
\cite{fooo3}, \cite{fooo}, \cite{fooo4}, Hofer, Wysocki and Zehnder
 \cite{hofer1}, \cite{hoferi}, \cite{hofergw}, \cite{hofer3}, \cite{hofersc}, \cite{hofer2},
 J. Li and G. Tian \cite{litian}, G. Liu and G. Tian
\cite{liutian}, \cite{liutian2}, G. Lu and G. Tian \cite{lutian},
McDuff and Wehrheim \cite{mcduffkuranishiatlases},
\cite{mcduff-wehrheim}, \cite{mcduff-wehreim2}, Pardon
\cite{pardon}, Ruan \cite{ruan}, Siebert \cite{siebert}).

It is expected that many of the applications of Gromov-Witten
invariants in symplectic topology that work for the semipositive
case can be extended to the general case when virtual fundamental
classes are used to define them. For instance, G. Lu has shown with
G. Liu and G. Tian's definition of Gromov-Witten invariants that the
Gromov width of a general compact symplectic manifold is bounded
from above by the symplectic area of a spherical class that has a
non-vanishing Gromov-Witten invariant with one of its constrains
being Poincar\'e dual to the class of a point \cite{glu}[Section
1.5].
\end{Remark}

\section{Coadjoint Orbits of Compact Lie groups}

In this section we recall some general facts about homogeneous
spaces $G_{\mathbb{C}}/P,$ coadjoint orbits and its geometry. Most
of the material shown here can be found in the classical literature
such as Bernstein, Gelfand and Gelfand \cite{GelfandBernstein} and
Kirillov \cite{orbit}. The material presented in Section
\ref{chernline} about Chern classes and stable curves is adapted
from Fulton and Woodward \cite[Chapters 2, 3]{fultonw}.

\subsection{Kostant-Kirillov-Souriau form}

Let $G$ be a connected compact Lie group, $\mathfrak{g}$ be its Lie
algebra, and $\mathfrak{g}^*$ be the dual of the Lie algebra
$\mathfrak{g}$. The compact Lie group $G$ acts on its Lie algebra
$\mathfrak{g}^*$ by the coadjoint action. Let $\lambda \in
\mathfrak{g}^*$ and $\mathcal{O}_\lambda$ be the coadjoint orbit
through $\lambda.$ As a homogeneous space $\mathcal{O}_\lambda \cong
G/G_\lambda,$ where $G_\lambda \subset G$ denotes the stabilizer of
$\lambda\in \mathfrak{g}^*$ under the coadjoint action.

The coadjoint orbit $\mathcal{O}_\lambda$ carries a symplectic form
defined as follows: for $\lambda \in \mathfrak{g}^*$ we define a
skew bilinear form on $\mathfrak{g}$ by
\begin{align*}
\w_{\lambda}: \mathfrak{g}\otimes \mathfrak{g} &\to \R \label{KKS}\\
(X, Y) &\mapsto \langle\lambda, [X, Y] \rangle \nonumber.
\end{align*}
The kernel of $\w_\lambda$ is the Lie algebra $\mathfrak{g}_\lambda$
of the stabilizer $G_\lambda$ of $\lambda \in \mathfrak{g}^*.$ In
particular, $\w_\lambda$ defines a nondegenerate skew-symmetric
bilinear form on $\mathfrak{g}/\mathfrak{g}_\lambda,$ a vector space
that can be identified with $T_\lambda(\mathcal{O}_\lambda)\subset
\mathfrak{g}^*.$ Using the coadjoint action, the bilinear form
$\w_\lambda$ induces a closed, invariant, nondegenerate 2-form on
the orbit $\mathcal{O}_\lambda,$ therefore defining a symplectic
structure on $\mathcal{O}_{\lambda}.$ This symplectic form is known
as the \textbf{Kostant-Kirillov-Souriau form} of the coadjoint orbit
$\mathcal{O}_{\lambda}.$

The compact Lie group $G$ admits a \textbf{complexification}
$G_{\mathbb{C}}.$ There exists a parabolic subgroup $P\subset
G_{\mathbb{C}}$ (see definition below) such that the homogeneous
spaces $G/G_\lambda$ and $G_{\mathbb{C}}/P$ are diffeomorphic. We
can use the complex structure defined on the quotient of complex Lie
groups $G_{\mathbb{C}}/P$ to define a complex structure $J$ on the
coadjoint orbit $G/G_\lambda \cong \mathcal{O}_\lambda$ which is
invariant under the $G_{\mathbb{C}}$-action. Together with the
Kostant-Kirillov-Souriau form, this makes the coadjoint orbit
$\mathcal{O}_\lambda$ a K\"{a}hler manifold.

\subsection{Parabolic subgroups}

Let $T\subset G$ be a maximal torus and $\mathfrak{t}$ denote its
Lie algebra. Let $R \subset \mathfrak{t}^*$ be the root system of
$T$ in $G$ so
$$
\mathfrak{g}_{\mathbb{C}}=\mathfrak{t}_{\mathbb{C}} \oplus
\bigoplus_{\alpha \in R}\mathfrak{g}_\alpha,
$$
where
$$
\mathfrak{g}_\alpha:=\{x\in \mathfrak{g}_{\mathbb{C}}:[\,h\,,
x\,]=\alpha(h)\,x \, \text{ for all }h\in
\mathfrak{t}_{\mathbb{C}}\}$$ is the root space associated with the
root $\alpha \in R.$

Let $R^{+}\subset R$ be a choice of positive roots with simple roots
$S \subset R^+.$ Let $W:=N_{G}(T)/T$ be the \textbf{Weyl group} of
$G.$ For every root $\alpha\in R,$ let $s_\alpha \in W$ be the
reflection associated to it.

Let $B\subset G_{\mathbb{C}}$ be the Borel subgroup with Lie algebra
$$
\mathfrak{b}=\mathfrak{t}_{\mathbb{C}}\oplus \bigoplus_{\alpha \in
R^+}\mathfrak{g}_{\alpha}
$$
We call a subgroup $P\subset G_{\mathbb{C}}$ \textbf{parabolic} if
$B\subset P.$ Let us fix a parabolic subgroup $P\supset B.$ Let
$W_P:=N_P(T)/T$ be the \textbf{Weyl group of $P$} and
$S_P:=\{\alpha\in S:s_\alpha\in W_P\}\subset S$ be the set of simple
roots whose corresponding reflections are in $W_P.$ The group $W_P$
is the subgroup of $W$ generated by the set of simple reflections
$\{s_\alpha:\alpha\in S_P\}.$ The parabolic subgroup $P$ is
generated by the Borel subgroup $B$ and $N_P(T).$ Set $R_P=R\cap \Z
S_P$ and $R_P^+=R^+\cap \Z S_P,$ where $\Z
S_P=\operatorname{span}_{\mathbb{Z}}(S_P)$ is the Abelian group
spanned by $S_P$ in $\mathfrak{t}^*.$ The Lie algebra of $P$ is
$$
\mathfrak{p}=\mathfrak{b}\oplus \bigoplus_{\alpha\in R_P^+}
\mathfrak{g}_{-\alpha}
$$
\begin{Remark}
The map $\tilde{P}\mapsto S_{\tilde{P}}$ establishes a bijection
between the set of all parabolic subgroups of $G_{\mathbb{C}}$
containing $B$ and the set of all subsets of simple roots contained
in $S$ (see for instance Kumar \cite[Chapter 5]{kacmoody}).
\end{Remark}

\subsection{Schubert varieties in $G_{\mathbb{C}}/P$}

For each $w\in W,$ the \textbf{length} $l(w)$ of $w$ is defined as
the minimum number of simple reflections $s_\alpha\in W, \alpha \in
S, $ whose product is $w.$

For $w', w\in W,$ write $w' \to w$ if there exists simple
reflections $s\in S$ such that $w=w'\cdot s$ and $l(w)=l(w')+1.$
Then define $w' \leq_B w$ if there is a sequence
$$
w'\to w_1 \to  \ldots \to w_m=w.
$$
The \textbf{Bruhat order} on $W$ is the partial ordering defined by
the relation $\leq_B.$

Let $W^P\subset W$ be the set of all \textbf{minimum length
representatives} for cosets in $W/W_P.$ Each element $w\in W$ can be
written uniquely as $w=w^Pw_P$ where $w^P\in W^P$ and $w_P\in W_P$.
Their lengths satisfy $l(w)=l(w^P)+l(w_P)$ (see e.g. Humphreys
\cite{humphreys}[Chapter 1]). The \textbf{Bruhat order} $\leq_B$ on
$W^P$ is the restriction to $W^P$ of the Bruhat order on $W.$ The
\textbf{Bruhat order} $\leq_B$ on $W/W_P$ is defined by $w'W_P
\leq_B wW_P$ if and only if $w'^P\leq_B w^P$ on $W^P.$

Let $w_0$ be the longest element in $W$ and let
$B^{\operatorname{op}}:=w_0Bw_0\subset G_{\mathbb{C}}$ be the
\textbf{Borel subgroup opposite} to $B.$ For $w\in W^P$ we define
the \textbf{Schubert cell}
$$
C_P(w):=BwP/P \subset G_{\mathbb{C}}/P
$$
and the \textbf{opposite Schubert cell}
$$
C_P^{\operatorname{op}}(w):=B^{\operatorname{op}}wP/P \subset
G_{\mathbb{C}}/P
$$
The \textbf{Schubert variety} $X_P(w)$ and its opposite
$X_P^{\operatorname{op}}(w)$ are by definition the closures of the
Schubert cells $C_P(w)$ and $C_P^{\operatorname{op}}(w),$
respectively.

The Bruhat order can be written in terms of the inclusion relation
of Schubert varieties, i.e., for $w', w\in W^P,$
$$
X_P(w') \subset X_P(w)
$$
if and only if $w'\leq_B w.$ Indeed,
$$
X_P(w)=\bigsqcup_{w'\leq_B w}C_P(w')
$$
For $w\in W^P,$ the Schubert cell $C_P(w)$ is isomorphic to an
affine space of complex dimension equal to $l(w).$ The set of
Schubert cells $\{X_P(w)\}_{w\in W^P}$ defines a CW-complex for
$G_{\mathbb{C}}/P$ with cells occurring only in even dimension.
Thus, the set of fundamental classes $\{\sigma_P(w):=[X_P(w)]\}_{
w\in W^P}$ is a free basis of $H_*(G_{\mathbb{C}}/P; \Z)$ as a
$\Z$-module. Likewise, the set of cohomology classes
$\{\operatorname{PD}(\sigma_P(w))\}_{w\in W^P}$ is a free basis of
$H^*(G_{\mathbb{C}}/P; \Z)$ as a $\Z$-module. Similar statements
hold for the fundamental classes of the opposite Schubert varieties
$\check{\sigma}_P(w):=[X_P^{\operatorname{op}}(w)]\in
H_*(G_{\mathbb{C}}/P; \Z).$ Note that
$\check{\sigma}_P(w)=\sigma_P(\check{w})$ where
$\check{w}:=w_0ww_p\in W^P$ and $w_p$ denotes the longest element in
$W_P.$

The Poincar\'e intersection pairing is the map
\begin{align*}\langle \,,\, \rangle: H_*(G_{\mathbb{C}}/P; \Z)\otimes
H_*(G_{\mathbb{C}}/P; \Z)\to \Z
\end{align*}
that associates to a pair of homology classes $a, b\in
H_*(G_{\mathbb{C}}/P; \Z)$ the coefficient at the class of a point
$[\operatorname{pt}]$ of the homological intersection product
$a\cdot b\in H_*(G_{\mathbb{C}}/P; \Z).$ The \textbf{duality
Theorem}  states that for any $w', w\in W^P,$
$$
\langle \check{\sigma}_P(w), \sigma_P(w') \rangle=\delta_{ww'},
$$
and in particular
$$
\dim_{\mathbb{C}}(G_{\mathbb{C}}/P)=\dim_{\mathbb{C}}(X_P(w))+\dim_{\mathbb{C}}(X_P^{\operatorname{op}}(w))
$$

\subsection{$T$-invariant curves}

The collection of cosets $\{w\cdot P\}_{w\in W^P}$ is the set of all
$T$-fixed points in $G_{\mathbb{C}}/P.$ For each positive root
$\alpha \in R^+-R^+_P$ there is a unique irreducible $T$-invariant
curve $C_\alpha$ that contains $1\cdot P$ and $s_\alpha\cdot P.$
Indeed, $C_\alpha:=Sl(2, \C)_\alpha\cdot P/P$ where $Sl(2,
\C)_\alpha \subset G_{\mathbb{C}}$ is the subgroup of
$G_{\mathbb{C}}$ with Lie algebra $\mathfrak{g}_{\alpha}\oplus
\mathfrak{g}_{-\alpha}\oplus [\mathfrak{g}_{\alpha},
\mathfrak{g}_{-\alpha}].$ The curve $C_\alpha$ is unique because
there is a neighborhood of $1\cdot P/P$ that is $T$-equivariantly
isomorphic to $\mathfrak{g}_{\mathbb{C}}/\mathfrak{p}.$ The
$T$-invariant curves in $\mathfrak{g}_{\mathbb{C}}/\mathfrak{p}$
correspond to weight spaces $\mathfrak{g}_{-\alpha}$ for $\alpha \in
R^+-R^+_P.$

\subsection{Chern classes}\label{chernline}

Let $(\cdot\, , \cdot)$ denote an ad-invariant inner product defined
on $\operatorname{Lie}(G)=\mathfrak{g}.$  We identify the Lie
algebra $\mathfrak{g}$ and its dual $\mathfrak{g}^*$ via this inner
product. The inner product $(\cdot\, , \cdot)$ defines an inner
product in $\mathfrak{t}^*= \R R.$ Each root $\alpha \in R$ has a
\textbf{coroot} $\check{\alpha} \in \mathfrak{t}$ that is identified
with $\dfrac{2\alpha}{(\alpha, \alpha)}$ via the inner product
$(\cdot\, , \cdot).$ The coroots form the dual root system
$\check{R}=\{\check{\alpha}:\alpha \in R\},$ with basis of simple
coroots $\check{S}=\{\check{\alpha}:\alpha \in S\}.$ For the
parabolic subgroup $P\subset G_{\mathbb{C}},$ let
$\check{S}_P:=\{\check{\alpha}:\alpha\in S_P\}\subset \check{S}.$

The \textbf{fundamental weight} $\w_{\alpha}\in \mathfrak{t}^*$
associated with $\alpha\in S$ is defined by $(\w_\alpha,
\check{\beta})=\delta_{\alpha, \beta}$ for $\beta\in S.$ A
\textbf{weight} is an element in the Abelian group spanned by the
set of fundamental weights.

The cohomology group $H^2(G_{\mathbb{C}}/P; \Z)$ can be identified
with the span
$$
\Z\{\w_\alpha:\alpha\in S-S_P\}
$$
and the homology group $H_2(G_{\mathbb{C}}/P; \Z)$ with the quotient
$$
\Z\check{S}/\Z\check{S}_P
$$
For each $\alpha \in S-S_P$ we identify the Schubert class
$\sigma_P(s_\alpha)\in H_2(G_{\mathbb{C}}/P; \Z)$ with
$\check{\alpha}+\Z\check{S}_P\in \Z\check{S}/\Z\check{S}_P$ and the
Poincar\'e dual class
$\operatorname{PD}(\check{\sigma}_P(s_\beta))\in
H^2(G_{\mathbb{C}}/P; \Z)$ with $\w_\beta.$ The pairing
\begin{align*}
H^2(G_{\mathbb{C}}/P; \Z)&\otimes H_2(G_{\mathbb{C}}/P; \Z) \to \Z \\
(\sigma, \alpha) &\mapsto \int_\sigma \alpha
\end{align*}
is then given by the ad-invariant inner product $(\cdot \,, \cdot)$
on $\mathfrak{t}.$

The following localization lemma, due to Bott \cite{bott}, allows us
to compute the first Chern classes of line bundles on $T$-invariant
curves of $G_{\mathbb{C}}/P:$

\begin{lemma}
Suppose that a torus $T$ acts on a curve $C\cong \cpi,$ with fixed
points $p\ne q,$ and suppose $L$ is a $T$-equivariant line bundle on
$C.$ Let $\eta_p$ and $\eta_q$ be the weights of $T$ acting on the
fibers $L_p$ and $L_q,$ and let $\psi_p$ be the weight of $T$ acting
on the tangent space to $C$ at $p.$ Then
$$
\eta_p-\eta_q=n\psi_p
$$
where $n=\int_C c_1(L)$ is the degree of $L.$
\end{lemma}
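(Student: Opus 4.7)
The plan is to prove this via equivariant localization (Atiyah--Bott--Berline--Vergne) applied to the equivariant first Chern class $c_1^T(L) \in H^2_T(C;\Z)$. The two ingredients needed are (a) identifying the equivariant integral of $c_1^T(L)$ over $C$ with the ordinary degree $n$, and (b) computing the equivariant Euler classes of the tangent spaces at the two fixed points.

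For (a), the equivariant integral $\int_C c_1^T(L)$ lies in $H^0_T(\mathrm{pt};\Z)=\Z$, and setting the equivariant parameters to zero recovers the ordinary integral $\int_C c_1(L)=n$; hence the equivariant integral equals $n$ as well. For (b), I would first check that for any $T$-action on $\cpi$ with two distinct fixed points $p,q$, the tangent weights satisfy $\psi_q=-\psi_p$. Indeed, by choosing homogeneous coordinates so that the action takes the form $t\cdot[z_0:z_1]=[\lambda(t)z_0:z_1]$ with weight $\lambda=\psi_p$, the opposite affine chart uses the coordinate $w=z_1/z_0$, on which $T$ acts with weight $-\lambda$.

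Localization then yields
$$n=\int_C c_1^T(L)=\frac{c_1^T(L)|_p}{e_T(T_pC)}+\frac{c_1^T(L)|_q}{e_T(T_qC)}=\frac{\eta_p}{\psi_p}+\frac{\eta_q}{-\psi_p}=\frac{\eta_p-\eta_q}{\psi_p},$$
and clearing denominators gives $\eta_p-\eta_q=n\psi_p$. The only delicate aspect is the sign bookkeeping in the localization formula; no conceptual difficulty arises. As an alternative that sidesteps equivariant cohomology entirely, one may use that every line bundle on $\cpi$ is isomorphic to some $\mathcal{O}(n)$, so any $T$-equivariant line bundle has the form $\mathcal{O}(n)\otimes\C_\chi$ for some character $\chi$, and both sides of the identity can then be checked by direct computation using a $T$-equivariant trivialization over the two standard affine charts whose transition cocycle is $z^{-n}$ twisted by $\chi$.
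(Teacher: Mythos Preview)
The paper does not supply a proof of this lemma; it is stated with attribution to Bott and invoked without argument. Your proof via Atiyah--Bott--Berline--Vergne localization is correct: the equivariant integral of $c_1^T(L)$ over $C$ lies in $H^0_T(\mathrm{pt})=\Z$ and hence agrees with the ordinary degree $n$, the tangent weights at the two fixed points of any nontrivial $T$-action on $\cpi$ are negatives of each other, and the localization formula then yields $n=(\eta_p-\eta_q)/\psi_p$. Your alternative route through the classification of equivariant line bundles on $\cpi$ as $\mathcal{O}(n)\otimes\C_\chi$ is also valid and is in fact closer in spirit to Bott's original elementary argument, which predates the ABBV formalism.
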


If $\eta$ is a weight that vanishes on all $\beta$ in $S_P,$ it
determines a character on $P,$ and so a line bundle
$L(\eta):=G_{\mathbb{C}}\times_P \C(\eta)$ over $G_{\mathbb{C}}/P.$
The Chern class $c_1(L(\eta))\in H^2(G_{\mathbb{C}}/P; \Z)$  is
identified with the weight $\eta\in \Z\{w_\alpha:\alpha\in S-S_P\}.$
Indeed, if $L$ is any holomorphic line bundle over
$G_{\mathbb{C}}/P,$ there exists a unique weight $\eta\in
\Z\{w_\alpha:\alpha\in  S-S_P\}$ such that $L\cong L(\eta),$ and
$\operatorname{Pic}(G_{\mathbb{C}}/P)\cong \Z\{w_\alpha:\alpha\in
S-S_P\}\cong H^2(G_{\mathbb{C}}/P; \Z)$ (see e.g. Borel-Weil
\cite{borelweil}). 

\begin{Proposition}
For any root $\alpha\in R^+-R^+_P,$
$$
[C_\alpha]=\check{\alpha}+\Z\check{S}_P \in H_2(G_{\mathbb{C}}/P;
\Z) \cong \Z\check{S}/\Z\check{S}_P
$$
\end{Proposition}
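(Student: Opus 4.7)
The plan is to identify $[C_\alpha]$ by pairing both sides with the basis $\{\omega_\beta\}_{\beta\in S-S_P}$ of $H^2(G_{\mathbb{C}}/P;\Z)$ furnished by fundamental weights. The pairing $H^2(G_{\mathbb{C}}/P;\Z)\otimes H_2(G_{\mathbb{C}}/P;\Z)\to\Z$ is perfect and, under the stated identifications $H^2\cong\Z\{\omega_\beta:\beta\in S-S_P\}$ and $H_2\cong\Z\check{S}/\Z\check{S}_P$, it becomes the restriction of the ad-invariant bilinear form, with $(\omega_\beta,\check{\gamma}+\Z\check{S}_P)=\delta_{\beta\gamma}$ for $\beta,\gamma\in S-S_P$. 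So it suffices to show that for every $\beta\in S-S_P$,
$$
\int_{C_\alpha} c_1(L(\omega_\beta)) \;=\; \langle\omega_\beta,\check{\alpha}\rangle.
$$
Indeed, the right hand side is precisely the coefficient of $\check{\beta}$ in the expansion of $\check{\alpha}$ along the simple coroots $\check{S}$, modulo $\Z\check{S}_P$.

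To compute this integral, I would apply Bott's localization lemma (stated just before the proposition) to the $T$-equivariant line bundle $L(\omega_\beta)|_{C_\alpha}$ on $C_\alpha\cong\cpi$, whose $T$-fixed points are $p=1\cdot P$ and $q=s_\alpha\cdot P$. Three weights enter the formula. First, the $T$-weight on the fiber $L(\omega_\beta)_p$ is $\omega_\beta$, because $L(\omega_\beta)=G_{\mathbb{C}}\times_P\C(\omega_\beta)$ and $T\subset P$ acts on $\C(\omega_\beta)$ with weight $\omega_\beta$. Second, conjugation by $s_\alpha$ turns this into the weight at $q$, namely $s_\alpha(\omega_\beta)=\omega_\beta-\langle\omega_\beta,\check{\alpha}\rangle\alpha$. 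Third, the tangent weight on $T_pC_\alpha$ is computed from the identification $T_p(G_{\mathbb{C}}/P)\cong\mathfrak{g}_{\mathbb{C}}/\mathfrak{p}\cong\bigoplus_{\gamma\in R^+-R^+_P}\mathfrak{g}_{-\gamma}$; since $C_\alpha=Sl(2,\C)_\alpha\cdot P/P$ and $\mathfrak{g}_\alpha,[\mathfrak{g}_\alpha,\mathfrak{g}_{-\alpha}]\subset\mathfrak{p}$, only the $\mathfrak{g}_{-\alpha}$ summand survives, so the tangent weight is $-\alpha$.

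Substituting into Bott's formula yields
$$
\int_{C_\alpha}c_1(L(\omega_\beta))=\frac{\omega_\beta-s_\alpha(\omega_\beta)}{-\alpha}=\frac{\langle\omega_\beta,\check{\alpha}\rangle\,\alpha}{-\alpha},
$$
so up to a global sign this equals $\langle\omega_\beta,\check{\alpha}\rangle$. Varying $\beta\in S-S_P$ and invoking the perfect pairing identifies $[C_\alpha]$ with $\check{\alpha}+\Z\check{S}_P$. The main obstacle is therefore purely a sign/orientation issue: the compatibility between the complex orientation of $C_\alpha$, the convention used in the associated bundle $L(\eta)=G_{\mathbb{C}}\times_P\C(\eta)$, and the direction in which Bott's difference $\eta_p-\eta_q$ is taken. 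I would fix this by checking the statement against the baseline case $G_{\mathbb{C}}=Sl(2,\C)$, $P=B$, in which $C_\alpha=\cpi$, $L(\omega_\alpha)\cong\mathcal{O}(1)$, and both sides must give $1=\langle\omega_\alpha,\check{\alpha}\rangle$; this calibrates all signs simultaneously and closes the argument.
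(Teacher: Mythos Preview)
Your proposal is correct and follows essentially the same route as the paper: both apply Bott's localization lemma to the $T$-equivariant line bundles $L(\eta)$ restricted to $C_\alpha$, compute the fiber weights at $1\cdot P$ and $s_\alpha\cdot P$ and the tangent weight $-\alpha$, and conclude by the nondegeneracy of the pairing. The only cosmetic differences are that the paper works with a general weight $\eta$ rather than the basis $\{\omega_\beta\}$, and that the paper takes the fiber weight of $L(\eta)$ at $1\cdot P$ to be $-\eta$ (a Borel--Weil convention), which makes the sign come out directly; your calibration against the $SL(2)$ case is a perfectly legitimate way to pin down the same sign.
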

\begin{proof}

The localization lemma implies that for any weight $\eta\in
\Z\{w_\alpha:\alpha\in S-S_P\}$
$$
\int_{C_\alpha}c_1(L(\eta))\cdot
(-\eta)=-\eta-(-s_\alpha(\eta))=s_{\alpha}(\eta)-\eta
$$
and thus
$$
(\eta\,, \check{\alpha})=\int_{C_\alpha}c_1(L(\eta))
$$
The nondegeneracy of the pair $(\,, \,)$ implies the proposition.

\end{proof}

\begin{Proposition}\label{chern}
The Chern class $c_1(T(G_{\mathbb{C}}/P))$ is identified with
$\sum_{\gamma \in R^{+}-R^+_P}\gamma$ via the isomorphism
\begin{align*}
\Z\{w_\alpha:\alpha\in  S-S_P\} &\to H^2(G_{\mathbb{C}}/P;
\Z)\\
\eta &\mapsto c_1(L(\eta))
\end{align*}
\end{Proposition}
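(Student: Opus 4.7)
I will follow the same localization strategy used in the previous proposition: for each simple root $\alpha\in S-S_P,$ I first evaluate $\int_{C_\alpha} c_1(T(G_{\mathbb{C}}/P))$ directly using Bott's localization lemma, and then read off which weight $\eta\in\Z\{\w_\alpha:\alpha\in S-S_P\}$ represents $c_1(T(G_{\mathbb{C}}/P))$ via the formula $(\eta,\check{\alpha})=\int_{C_\alpha}c_1(L(\eta))$ established earlier. Restricted to the rational curve $C_\alpha\cong\cpi,$ the tangent bundle $T(G_{\mathbb{C}}/P)$ is a $T$-equivariant holomorphic vector bundle and hence splits equivariantly as a direct sum of $T$-equivariant holomorphic line bundles, so that Bott's lemma applies summand by summand. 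Summing the contributions reduces the computation to knowing the $T$-weight sums on the fibers of $T(G_{\mathbb{C}}/P)$ over the two $T$-fixed points $eP$ and $s_\alpha P.$

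Using the identification $T(G_{\mathbb{C}}/P)=G_{\mathbb{C}}\times_P(\mathfrak{g}_{\mathbb{C}}/\mathfrak{p}),$ the fiber at $eP$ is $\mathfrak{g}_{\mathbb{C}}/\mathfrak{p}\cong\bigoplus_{\beta\in R^{+}-R^+_P}\mathfrak{g}_{-\beta}$ with $T$-weight sum $-\sum_{\beta\in R^{+}-R^+_P}\beta,$ while left translation by $s_\alpha$ identifies the fiber at $s_\alpha P$ with $\mathfrak{g}_{\mathbb{C}}/\operatorname{Ad}(s_\alpha)\mathfrak{p},$ whose $T$-weight sum is $-s_\alpha\!\left(\sum_{\beta\in R^{+}-R^+_P}\beta\right).$ Since the tangent weight of $T$ on $C_\alpha$ at $eP$ equals $-\alpha,$ summing the identity $\eta_p-\eta_q=n\,\psi_p$ over the line-bundle summands yields
\[
\Bigl(\int_{C_\alpha}c_1(T(G_{\mathbb{C}}/P))\Bigr)(-\alpha)=-\!\!\sum_{\beta\in R^{+}-R^+_P}\!\!\beta\;+\;s_\alpha\!\Bigl(\sum_{\beta\in R^{+}-R^+_P}\beta\Bigr)=-\Bigl(\sum_{\beta\in R^{+}-R^+_P}\beta,\;\check{\alpha}\Bigr)\alpha,
\]
and therefore $\int_{C_\alpha}c_1(T(G_{\mathbb{C}}/P))=\bigl(\sum_{\gamma\in R^{+}-R^+_P}\gamma,\;\check{\alpha}\bigr)$ for every $\alpha\in S-S_P.$ Comparison with $(\eta,\check{\alpha})=\int_{C_\alpha}c_1(L(\eta))$ then identifies $c_1(T(G_{\mathbb{C}}/P))$ with the weight $\sum_{\gamma\in R^{+}-R^+_P}\gamma.$

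The only nonformal point to verify is that $\sum_{\gamma\in R^{+}-R^+_P}\gamma$ actually represents a class in $\Z\{\w_\alpha:\alpha\in S-S_P\},$ i.e., that its inner product with $\check{\alpha}$ vanishes for every $\alpha\in S_P;$ equivalently, that the sum is $W_P$-invariant. This I expect to be the main (though standard) obstacle, and it follows from the classical fact that for $\alpha\in S_P$ the simple reflection $s_\alpha$ permutes $R^{+}-R^+_P$: it permutes $R^+$ away from $\{\alpha\}\subset R^+_P,$ and $s_\alpha\beta=\beta-(\beta,\check\alpha)\alpha$ preserves the coefficient of every simple root outside $S_P,$ so $\beta\notin R^+_P$ forces $s_\alpha\beta\notin R^+_P.$ With this $W_P$-invariance in hand, the identification of $c_1(T(G_{\mathbb{C}}/P))$ with $\sum_{\gamma\in R^{+}-R^+_P}\gamma$ is complete.
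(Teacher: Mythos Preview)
Your argument is correct, but it takes a longer route than the paper's. The paper simply observes that $c_1(T(G_{\mathbb{C}}/P))=c_1\bigl(\bigwedge^{n}T(G_{\mathbb{C}}/P)\bigr)$ with $n=\dim_{\mathbb{C}}(G_{\mathbb{C}}/P)$; since this determinant bundle is a $G_{\mathbb{C}}$-equivariant line bundle on $G_{\mathbb{C}}/P$, it is automatically of the form $L(\eta)$ for a unique weight $\eta\in\Z\{w_\alpha:\alpha\in S-S_P\}$, and reading off the $T$-weight on the fiber at $1\cdot P$ (namely $-\sum_{\gamma\in R^{+}-R^+_P}\gamma$) pins down $\eta=\sum_{\gamma\in R^{+}-R^+_P}\gamma$ in one step. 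By contrast, you restrict to each $T$-invariant curve $C_\alpha$, split the tangent bundle equivariantly, apply Bott's lemma summand by summand to compute the pairings $\int_{C_\alpha}c_1(T(G_{\mathbb{C}}/P))$, and then must separately verify that $\sum_{\gamma}\gamma$ lies in the sublattice $\Z\{w_\alpha:\alpha\in S-S_P\}$ via the $W_P$-invariance of $R^{+}-R^+_P$. The paper's approach buys brevity and sidesteps this last check entirely (homogeneity of the determinant bundle forces its weight to be $P$-integral), while yours has the virtue of being a transparent continuation of the localization argument from the previous proposition and of making the pairing with each $[C_\alpha]$ explicit.
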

\begin{proof}
The tangent space of $G_{\mathbb{C}}/P$ at the point $1\cdot P\in
G_{\mathbb{C}}/P$ is
$$
\mathfrak{g}_{\mathbb{C}}/\mathfrak{p}=\bigoplus_{\alpha \in
R^+-R^+_P}\mathfrak{g}_{-\alpha}
$$
The weight of the line bundle $\bigwedge^n T(G_{\mathbb{C}}/P), \,
n=\dim_{\mathbb{C}}(G_{\mathbb{C}}/P), \,$ at the point $1\cdot P\in
G_{\mathbb{C}}/P$ is $-\sum_{\gamma \in R^{+}-R^+_P}\gamma.$ The
proposition follows from the fact that
$c_1(T(G_{\mathbb{C}}/P))=c_1\Bigl(\bigwedge^n
T(G_{\mathbb{C}}/P)\Bigr).$
\end{proof}

\subsection{Pl\"{u}cker embedding}
Let
$$
\eta=\sum_{\beta\in S-S_P}l_\beta w_\beta\in \Z_{\geq
0}\{w_\alpha:\beta\in S-S_P\},
$$
be an integral weight and $V_\eta$ be the irreducible representation
of $G_{\mathbb{C}}$ with highest weight $\eta.$ The Borel-Weil-Bott
Theorem states that the set of holomorphic sections
$H^0(G_{\mathbb{C}}/P, L(\eta))$ of the line bundle $L(\eta)$ is
isomorphic as a $G_{\mathbb{C}}$-representation to the irreducible
representation $V_\eta.$

Let $v_\eta$ be a highest weight vector of $V_\eta$. We can embed
$G_{\mathbb{C}}/P$ in the projective space $\mathbb{P}V_\eta$ by the
transformation
\begin{align*}
G_{\mathbb{C}}/P &\hookrightarrow \mathbb{P}V_\eta \nonumber \\
[g] &\mapsto [g\cdot v_\eta]
\end{align*}

\begin{Remark}
For $\lambda \in \R_{\geq 0}\{w_\alpha:\beta\in S-S_P\}\subset
\mathfrak{t}^*,$ let $\mathcal{O}_\lambda$ be the coadjoint orbit
passing through $\lambda$ and $\w_\lambda$ its
Kostant-Kirillov-Souriau form. The cohomology class of $\w_\lambda$
is identified with $\lambda \in H^2(G_{\mathbb{C}}/P; \, \R)\cong \R
\{w_\alpha:\beta\in S-S_P\}\subset \mathfrak{t}^*,$ and for any
positive root $\alpha \in R^+-R^+_P$
$$
\w_\lambda(C_\alpha)=\int_{C_\alpha} \w_\lambda=\langle \lambda\, ,
\check{\alpha}\rangle
$$
If $\lambda$ is integral, the projective embedding $G_{\mathbb{C}}/P
\hookrightarrow \mathbb{P}V_\lambda$ is symplectic, i.e., the
pullback of the Fubini-Study form defined on $\mathbb{P}V_\lambda$
is the Kostant-Kirillov-Souriau form $\w_\lambda$ defined on
$\mathcal{O}_\lambda\cong G_{\mathbb{C}}/P.$

\end{Remark}

\subsection{Algebraic Gromov-Witten invariants}

Let $J$ be the invariant complex structure defined on the quotient
of complex Lie groups $G_{\mathbb{C}}/P.$ Let $A\in
H_2(G_{\mathbb{C}}/P; \Z)$ and $k\in \Z_{\geq 0}.$  The moduli space
$\overline{\mathcal{M}}_{A, k}(G_{\mathbb{C}}/P, J)$ of stable
$J$-holomorphic curves of degree $A$ with $k$-marked points is a
normal projective variety of dimension equal to
$$
\dim \overline{\mathcal{M}}_{A, k}(G_{\mathbb{C}}/P,
J)=\dim(G_{\mathbb{C}}/P)+ 2c_1(A)+2k-6
$$
(see e.g. Fulton and Pandharipande \cite{fultonp}).

Let
$$
\operatorname{ev}_J^k=(\operatorname{ev}_1, \cdots,
\operatorname{ev}_k):\overline{\mathcal{M}}_{A, k}(G_{\mathbb{C}}/P,
J) \to (G_{\mathbb{C}}/P)^k
$$
be the evaluation map. The \textbf{algebraic Gromov-Witten
invariant} $\operatorname{GW}_{A, k}^{\,\operatorname{alg}}$ on
$G_{\mathbb{C}}/P$ is defined by
$$
\operatorname{GW}_{A,k}^{\,\operatorname{alg}}(a_1, \cdots,
a_k):=\int_{\overline{\mathcal{M}}_{A, k}(G_{\mathbb{C}}/P,
J)}\operatorname{ev}_1^*a_1\cup \cdots \cup \operatorname{ev}_k^*a_k
$$
for cohomology classes $a_1, \cdots, a_k\in H^*(G_{\mathbb{C}}/P;
\Z).$ If $X_1,\ldots, X_k \subset G_{\mathbb{C}}/P$ are irreducible
varieties whose fundamental classes are Poincar\'e dual to
$a_1,\ldots, a_k \in H^*(G_{\mathbb{C}}/P; \Z)$ and
$$
\dim{\overline{\mathcal{M}}_{A, k}(G_{\mathbb{C}}/P,
J)}=\sum_{i=1}^k \deg{a_i},$$ the number of $J$-holomorphic spheres
of degree $A$ passing through $g_1X_1, \cdots, g_kX_k$ coincides
with the Gromov--Witten invariant $\operatorname{GW}_{A,
k}^{\,\operatorname{alg}}(a_1,\ldots, a_k)$ for generic $g_1,
\ldots,\\ g_k \in G_{\mathbb{C}}$ (see e.g. Fulton and Pandharipande
\cite[Lemma 14]{fultonp}).

\begin{Remark}
Let $\mathcal{O}_\lambda$ be a coadjoint orbit passing through
$\lambda\in \mathfrak{g}^*$ with Kostant-Kirillov-Souriau form
$\w_\lambda.$ Assume that $P\subset G_{\mathbb{C}}$ is a parabolic
subgroup such that $\mathcal{O}_\lambda \cong G_{\mathbb{C}}/P.$ Let
$A\in H_2(G_{\mathbb{C}}/P; \Z)$ and $k\in \Z_{\geq 0}.$

If $J\in \mathcal{J}(\mathcal{O}_\lambda, \w_\lambda)$ is a regular
almost complex structure and the class $A$ is $J$-indecomposable
class, we denote by $\operatorname{GW}^{J}_{A, k}$ the Gromov-Witten
invariant defined exclusively for $J$ and $A$ (see Remark
\ref{gwj}). If $(\mathcal{O}_\lambda, \w_\lambda)$ is an integral
coadjoint orbit, we denote by
$\operatorname{GW}^{\operatorname{cm}}_{A, k}$ the symplectic
Gromov-Witten invariant defined by Cielibak and Mohnke (see Section
\ref{gwcm}).

When the coadjoint orbit  $(\mathcal{O}_\lambda, \w_\lambda)$ is
integral, $J$ is the almost complex structure coming from the
quotient of complex Lie groups $G_{\mathbb{C}}/P$ and $A$ is a
$J$-indecomposable class, all the three Gromov-Witten invariants
$$\operatorname{GW}^{\operatorname{alg}}_{A, k},\,\,\,
\operatorname{GW}^{J}_{A, k},\,\,\,
\operatorname{GW}^{\operatorname{cm}}_{A, k}$$ coincide.
\end{Remark}

\section{Curve Neighborhoods and Gromov Witten invariants}

In this section we define the concept of curve neighborhood and
explain its relation with Gromov-Witten invariants. The material
presented here is mostly adapted from Buch and Mihalcea
\cite{curvenghb}.

Let $G$ be a compact connected Lie group. Let $B$ be a Borel
subgroup and $P$ be a parabolic subgroup of $G_{\mathbb{C}}$ with
$B\subset P.$ Let $J$ be the complex structure defined on
$G_{\mathbb{C}}/P$ coming from its presentation as a quotient of
complex Lie groups. Let $A\in H_2(G_{\mathbb{C}}/P; \Z)$ be a
spherical class. Let $\overline{\mathcal{M}}_{A,
2}(G_{\mathbb{C}}/P, J)$ be the moduli space of equivalence classes
of stable $J$-holomorphic curves of degree $A$ with two marked
points and
$$
\operatorname{ev}_J^2=(\operatorname{ev}_1, \operatorname{ev}_2):
\overline{\mathcal{M}}_{A, 2}(G_{\mathbb{C}}/P, J) \to
(G_{\mathbb{C}}/P)^2
$$
be its corresponding evaluation map. Given any subvariety $Z\subset
G_{\mathbb{C}}/P,$ define the \textbf{degree $A$ neighborhood} of
$Z$ to be
$$
\Gamma_A(Z):=\operatorname{ev}_2(\operatorname{ev}_1^{-1}(Z))\subset
G_{\mathbb{C}}/P,
$$
i.e., the union of all stable $J$-holomorphic curves of degree $A$
that meet $Z.$ The \textbf{Gromov-Witten variety} of $Z$ is
$$
\operatorname{GW}_A(Z):=\operatorname{ev}_1^{-1}(Z)\subset
\overline{\mathcal{M}}_{A, 2}(G_{\mathbb{C}}/P, J)
$$
By definition,
$$
\Gamma_A(Z)=\operatorname{ev_2}(\operatorname{GW}_A(Z))
$$
When $Z\subset G_{\mathbb{C}}/P$ is a irreducible variety, then
$\Gamma_A(Z)$ is also a irreducible variety. In particular, if $Z$
is a $B$-stable irreducible variety, i.e. a $B$-stable Schubert
variety, then so is $\Gamma_A(Z)$ (Buch, Chaput, Mihalcea and Perrin
\cite{BuchChaputMihalceaPerrin}).

For our purposes, the following lemma would be enough to compute
Gromov-Witten invariants:
\begin{lemma}\label{alexcastro}
Let $A\in H_2(G_{\mathbb{C}}/P;\,\Z)$ be a $J$-indecomposable class
and $\Gamma_A(\operatorname{pt})$ be the degree $A$ neighborhood of
a point in $G_{\mathbb{C}}/P.$ If
$$
c_1(A)=1+\dim_{\mathbb{C}}{\Gamma_A(\operatorname{pt})},
$$
then
$$
\operatorname{GW}_{A,
2}^{\,\operatorname{alg}}(\operatorname{PD}[\operatorname{pt}],\operatorname{PD}[\Gamma_A(\operatorname{pt})]^{\operatorname{op}})>
0
$$
\end{lemma}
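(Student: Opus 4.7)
The strategy is to interpret the algebraic Gromov--Witten invariant as a literal count of stable maps through moved cycles, and then use the duality theorem for Schubert classes together with the very definition of the curve neighborhood to exhibit at least one such map.

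First I would fix the base point $\operatorname{pt}:=1\cdot P\in G_{\mathbb{C}}/P$, which is $B$-fixed. As noted above, since $\{\operatorname{pt}\}$ is $B$-stable and irreducible, the curve neighborhood $\Gamma_A(\operatorname{pt})$ is a $B$-stable irreducible closed subvariety of $G_{\mathbb{C}}/P$, hence a Schubert variety $X_P(z_A)$ for some $z_A\in W^P$. Write $n=\dim_{\mathbb{C}}G_{\mathbb{C}}/P$ and $d=\dim_{\mathbb{C}}\Gamma_A(\operatorname{pt})=l(z_A)$; by hypothesis $c_1(A)=d+1$. The moduli space $\overline{\mathcal{M}}_{A,2}(G_{\mathbb{C}}/P,J)$ has complex dimension $n+c_1(A)-1=n+d$, so the fiber $\operatorname{ev}_1^{-1}(\operatorname{pt})$ has complex dimension $d$, matching the dimension of its image $\Gamma_A(\operatorname{pt})$ under $\operatorname{ev}_2$. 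Consequently $\operatorname{ev}_2|_{\operatorname{ev}_1^{-1}(\operatorname{pt})}$ is a generically finite surjection onto $X_P(z_A)$ of some positive degree $N\geq 1$. The duality theorem also gives $\dim_{\mathbb{C}}X_P^{\operatorname{op}}(z_A)=n-d$, so the sum of complex codimensions of $\{\operatorname{pt}\}$ and $X_P^{\operatorname{op}}(z_A)$ in $G_{\mathbb{C}}/P$ equals $n+d$; this matches $\dim_{\mathbb{C}}\overline{\mathcal{M}}_{A,2}$ and the invariant in the statement is therefore well defined.

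Next, using the enumerative interpretation of $\operatorname{GW}^{\operatorname{alg}}$ recalled after its definition (Fulton--Pandharipande), for generic $g_1,g_2\in G_{\mathbb{C}}$ the invariant equals the number of stable $J$-holomorphic curves $(u;z_1,z_2)$ of degree $A$ with $u(z_1)=g_1\cdot\operatorname{pt}$ and $u(z_2)\in g_2\cdot X_P^{\operatorname{op}}(z_A)$. By the transitivity of the $G_{\mathbb{C}}$-action on $G_{\mathbb{C}}/P$ I would take $g_1=1$; the constraint $u(z_1)=\operatorname{pt}$ then forces $u(z_2)\in\operatorname{ev}_2(\operatorname{ev}_1^{-1}(\operatorname{pt}))\cap g_2X_P^{\operatorname{op}}(z_A)=X_P(z_A)\cap g_2X_P^{\operatorname{op}}(z_A)$. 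The duality theorem gives $\langle \check\sigma_P(z_A),\sigma_P(z_A)\rangle=1$, so a Kleiman--Bertini argument shows that for generic $g_2$ this intersection is a single reduced point $q$. The invariant thus equals the number of stable maps of degree $A$ passing through the pair $(\operatorname{pt},q)$. Since $q\in X_P(z_A)=\Gamma_A(\operatorname{pt})=\operatorname{ev}_2(\operatorname{ev}_1^{-1}(\operatorname{pt}))$, there is by definition at least one such stable map (in fact exactly $N$ of them), and each contributes $+1$ because the cycles involved are complex analytic subvarieties carrying the canonical complex orientation. Therefore the Gromov--Witten invariant equals $N\geq 1$, proving strict positivity.

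The main obstacle I anticipate is the transversality bookkeeping in the middle step: one must guarantee simultaneously (i) that a generic $G_{\mathbb{C}}$-translate $g_2 X_P^{\operatorname{op}}(z_A)$ meets $X_P(z_A)$ in a single reduced point, (ii) that the evaluation map $\operatorname{ev}_J^2$ is transverse to $\{\operatorname{pt}\}\times g_2 X_P^{\operatorname{op}}(z_A)$ at each of the resulting stable maps, and (iii) that the $J$-indecomposability of $A$ keeps the relevant intersections inside the locus of irreducible $J$-holomorphic spheres so that no bubble components spuriously contribute. Each of these is a standard Kleiman--Bertini application on the convex homogeneous variety $G_{\mathbb{C}}/P$, but combining them coherently so that the degree $N$ of $\operatorname{ev}_2|_{\operatorname{ev}_1^{-1}(\operatorname{pt})}$ literally equals the intersection count is the technical heart of the argument.
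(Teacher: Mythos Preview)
Your proposal is correct and follows essentially the same route as the paper: both invoke Kleiman--Bertini transversality for the evaluation map, the duality theorem for Schubert classes to pin down a single reduced intersection point of $\Gamma_A(1\cdot P)$ with a generic translate of its opposite, the definition of the curve neighborhood to exhibit a stable curve through that point, and $J$-indecomposability to guarantee the curve is an honest sphere rather than a bubble tree. Your write-up is somewhat more explicit about the dimension bookkeeping and the positive degree $N$ of $\operatorname{ev}_2|_{\operatorname{ev}_1^{-1}(\operatorname{pt})}$, whereas the paper packages the final positivity step into a citation of McDuff--Salamon \cite{mcduff}[Proposition~7.4.5].
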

\begin{proof}
The Bertini-Kleiman transversality theorem implies that for generic
$g, h\in G_{\mathbb{C}}$ the evaluation map
$$
\operatorname{ev}_J:\overline{\mathcal{M}}_{A, 2}(G_{\mathbb{C}}/P,
J)\to (G_{\mathbb{C}}/P)^2
$$
is transverse to $\{g\cdot P\} \times h\cdot \Gamma_A(1\cdot
P)^{\operatorname{op}}\subset (G_{\mathbb{C}}/P)^2.$

On the other hand, the duality theorem implies that for generic $g,
h\in G_{\mathbb{C}},$ the Schubert varieties $g\cdot \Gamma_A(1\cdot
P)$ and $h\cdot \Gamma_A(1\cdot P)^{\operatorname{op}}$ intersect
transversally at one point say $q\cdot P.$ In particular, there
exists a stable curve $(u; \Sigma; z_1, z_2)$ with two marked points
such that $u(z_1)=g\cdot P$ and $u(z_2)=q\cdot P\in h\cdot
\Gamma_A(1\cdot P)^{\operatorname{op}}.$ The indecomposability
condition of $A$ implies that $\Sigma\cong \cpi.$

If the opposite Schubert variety
$\Gamma_A(\operatorname{pt})^{\operatorname{op}}$ satisfies the
dimensional constraint
$$
\dim_{\mathbb{C}}{\Gamma_A(\operatorname{pt})^{\operatorname{op}}}+\dim_{\mathbb{C}}{\overline{\mathcal{M}}_{A,2}(G_{\mathbb{C}}/P)}=2\dim_{\mathbb{C}}{G_{\mathbb{C}}/P},
$$
that is the same as having
$c_1(A)=1+\dim_{\mathbb{C}}{\Gamma_A(\operatorname{pt})},$ the
Gromov-Witten invariant $\operatorname{GW}_{A,
2}^{\,\operatorname{alg}}(\operatorname{PD}[\operatorname{pt}],\operatorname{PD}[\Gamma_A(\operatorname{pt})]^{\operatorname{op}})$
is finite and positive (see e.g. McDuff and Salamon
\cite{mcduff}[Proposition 7.4.5]).
\end{proof}
\begin{Remark}
For any two $B$-stable Schubert varieties $Z_1, Z_2$ and any degree
$A\in H_2(G_{\mathbb{C}}/P; \Z)$ the following formula due to Buch
and Mihalcea \cite{curvenghb}
$$
\operatorname{GW}_{A,
2}^{\,\operatorname{alg}}(\operatorname{PD}[Z_1],\operatorname{PD}[Z_2])=
\begin{cases}
1 & \text{ if }c_1(A)-1+\dim_{\mathbb{C}}{Z_1}=\dim_{\mathbb{C}}{\Gamma_A(Z_1)} \\ & \text{ and $[\Gamma_A(Z_1)]$ is the Schubert class} \\
& \text{ opposite to $[Z_2]$} \\
0 & \text{otherwise}
\end{cases}
$$
extends the previous lemma. The above formula is a consequence of
the \textit{projection formula} and the fact that the pushforward of
$\operatorname{PD}[\operatorname{GW}_A(Z_1)]$ under the evaluation
map $\operatorname{ev}_{2}$ is equal to
$\operatorname{PD}[\Gamma_A(Z_1)]$ if
$\dim_{\mathbb{C}}{\operatorname{GW}_A(Z_1)}=\dim_{\mathbb{C}}{\Gamma_A(Z_1)}$
and zero otherwise (Buch and Mihalcea \cite{curvenghb}). We briefly
explain how the above formula is deduced from these two facts:
\begin{align*}
&\operatorname{GW}_{A,
2}^{\,\operatorname{alg}}(\operatorname{PD}[Z_1],\operatorname{PD}[Z_2]):=\int_{\overline{\mathcal{M}}_{A,2}(G_{\mathbb{C}}/P,
J)}\operatorname{ev}_1^*\operatorname{PD}[Z_1]\cup
\operatorname{ev}_2^*\operatorname{PD}[Z_2] \\
&=\int_{G_{\mathbb{C}}/P}\operatorname{ev}_{2*}(\operatorname{ev}_1^*\operatorname{PD}[Z_1])\cup
\operatorname{PD}[Z_2]=\int_{G_{\mathbb{C}}/P}\operatorname{ev}_{2*}(\operatorname{PD}[\operatorname{GW}_A(Z_1)])\cup
\operatorname{PD}[Z_2]\\
&=\begin{cases}
\int_{G_{\mathbb{C}}/P}\operatorname{PD}[\Gamma_A(Z_1)]\cup
\operatorname{PD}[Z_2] & \text{if
}\dim_{\mathbb{C}}{\operatorname{GW}_A(Z_1)}=\dim_{\mathbb{C}}{\Gamma_A(Z_1)} \\
0 & \text{otherwise}
\end{cases}
\end{align*}
\end{Remark}
\begin{example}
Let $G(k, n)$ denote the Grassmannian manifold of $k$-dimensional
vector spaces in $\C^n.$ Let $A$ be the Schubert class that cyclic
generates the homology group $H_2(G(k, n); \Z).$ Let $J$ be the
invariant complex structure defined on $G(k, n).$

For a $J$-holomorphic curve $u:\cpi \to G(k, n)$ of degree $A,$
there exist linearly independent vectors $v_0, v_1, \ldots, v_k\in
\C^n$ such that
\begin{align*}
u:\cpi &\to G(k, n)\\
[z_0, z_1] &\mapsto \operatorname{span}\{z_0v_0+z_1v_1, v_2, \ldots,
v_k\}
\end{align*}
If we let $W^{k-1}=\operatorname{span}\{v_2, \ldots, v_k\}$ and
$W^{k+1}=\operatorname{span}\{v_0, v_1, v_2, \ldots, v_k\},$ then
$$
u(\cpi)=\{V^k\in G(k, n): W^{k-1}\subset V^k \subset W^{k+1}\}
$$
Note that two vector subspaces $V^k, V'^{k}\in G(k, n)$ can be
joined by a $J$-holomorphic curve of degree $A$ if and only if
$$
\dim(V^k\cap V'^{k})\geq k-1\,\, \text{ and }\,\,
\dim(V^k+V'^{k})\leq k+1.
$$
Thus, for a fixed vector subspace $V^k\in G(k, n),$ the degree $A$
neighborhood of $V^k$ is the set
$$
\Gamma_A(V^k)=\{V'^{k}\in G(k, n):k-1\leq \dim(V^k\cap V'^{k})\leq
\dim(V^k + V'^{k})\leq k+1\}
$$
The complex dimension of $\Gamma_A(V^k)$ equals to $n-1=c_1(A)-1.$
By the previous Lemma
$$
\operatorname{GW}_{A,
2}^{\,\operatorname{alg}}(\operatorname{PD}[\operatorname{pt}],\operatorname{PD}[\Gamma_A(\operatorname{pt})]^{\operatorname{op}})=1
$$
\end{example}

The following Lemma allows us to give an explicit description of the
degree $A$ neighborhood $\Gamma_A(1\cdot P)$ of the point $1\cdot
P\in G_{\mathbb{C}}/P$ when $P$ is a maximal parabolic subgroup of
$G_{\mathbb{C}}$ and $A$ is the Schubert class that cyclic generates
$H_2(G_{\mathbb{C}}/P; \Z).$
\begin{lemma}\label{fixedpoints}
Let $\alpha \in S$ and $P\subset G_{\mathbb{C}}$ be the maximal
parabolic subgroup such that $S_P=S-\{\alpha\}.$ Let
$A=\sigma_P(s_\alpha)$ be the Schubert class that cyclic generates
$H_2(G_{\mathbb{C}}/P; \Z).$ Let
\begin{align*}
Z_A^{P}:=\{s_\beta \cdot P\in W/W_P: \beta \in R^+-R_P^+, \,\,
\check{\beta}=\check{\alpha}+ \Z \check{S}_P\}
\end{align*}
The set $Z_A^P$ has a unique maximal element $z_A^P$ with respect to
the Bruhat order defined on $W/W_P$ and
$$
\Gamma_A(1\cdot P)=X_P(z_A^P)
$$
\end{lemma}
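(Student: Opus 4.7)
My plan is to identify $\Gamma_A(1\cdot P)$ with a Schubert variety $X_P(z)$ and then show that the coset $z\cdot W_P$ agrees with the Bruhat-maximum of $Z_A^P$. The key structural input is that a Schubert variety $X_P(w)\subset G_{\mathbb{C}}/P$ is determined by the Bruhat-maximal element of its set of $T$-fixed points $\{v\cdot P:v\leq_B w\}$.

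First, I would use the result of Buch--Chaput--Mihalcea--Perrin cited earlier (that $\Gamma_A(Z)$ is $B$-stable and irreducible whenever $Z$ is) to conclude that $\Gamma_A(1\cdot P)$ is $P$-stable and irreducible, hence equal to $X_P(z)$ for some $z\in W^P$. Next, using the classification of $T$-invariant rational curves in $G_{\mathbb{C}}/P$ together with the earlier identification $[C_\beta]=\check\beta+\mathbb{Z}\check S_P$, I would note that the $T$-invariant degree-$A$ curves through $1\cdot P$ are precisely the $C_\beta$ with $\beta\in R^+-R^+_P$ and $\check\beta\equiv\check\alpha\pmod{\mathbb{Z}\check S_P}$. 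Since each such $C_\beta$ also contains $s_\beta\cdot P$, this yields $Z_A^P\subseteq X_P(z)^T=\{v\cdot P:v\leq_B z\}$, so every element of $Z_A^P$ is $\leq_B z\cdot P$ in $W/W_P$.

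The reverse direction is the crux: I would show $z\cdot P$ itself lies in $Z_A^P$. Because $P$ is maximal, $H_2(G_{\mathbb{C}}/P;\mathbb{Z})=\mathbb{Z}A$ with $A$ the generator, so $A$ admits no non-trivial decomposition into effective classes; in particular $A$ is $J$-indecomposable. The subvariety $\operatorname{ev}_1^{-1}(1\cdot P)\cap\operatorname{ev}_2^{-1}(z\cdot P)\subset\overline{\mathcal{M}}_{A,2}(G_{\mathbb{C}}/P,J)$ is $T$-stable, projective, and non-empty (because $z\cdot P\in\Gamma_A(1\cdot P)=\operatorname{ev}_2(\operatorname{ev}_1^{-1}(1\cdot P))$), so by Borel's fixed point theorem it contains a $T$-fixed stable map $(u;\Sigma;z_1,z_2)$. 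By $J$-indecomposability, only one irreducible component $\Sigma_0$ of $\Sigma$ carries $u$ non-constantly; its image is some $T$-invariant curve $C_\beta$ with $\check\beta\equiv\check\alpha\pmod{\mathbb{Z}\check S_P}$, and every remaining component is contracted to one of the two $T$-fixed points $\{1\cdot P,s_\beta\cdot P\}$ of $C_\beta$. A short case analysis on the locations of $z_1,z_2$ then forces $u(z_2)\in\{1\cdot P,s_\beta\cdot P\}$. Since $s_\alpha\cdot P\in Z_A^P\subseteq\Gamma_A(1\cdot P)$ and $s_\alpha\cdot P\neq 1\cdot P$, one has $z\cdot P\neq 1\cdot P$, hence $z\cdot P=s_\beta\cdot P\in Z_A^P$.

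Combining both directions, $z\cdot P\in Z_A^P$ dominates every element of $Z_A^P$ in the Bruhat order on $W/W_P$; this simultaneously yields the existence and uniqueness of the Bruhat-maximum $z_A^P:=z\cdot P$ of $Z_A^P$ and the equality $\Gamma_A(1\cdot P)=X_P(z_A^P)$. The main obstacle I anticipate is the case analysis of $T$-fixed stable maps in the previous paragraph: one must rule out configurations with contracted rational tails sending $u(z_2)$ to a $T$-fixed point other than the two endpoints of the main component, and this is precisely where the rigidity provided by $J$-indecomposability of $A$, together with the classification of $T$-fixed rational curves in $G_{\mathbb{C}}/P$, is essential.
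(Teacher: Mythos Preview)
Your proposal is correct and follows the same approach as the paper: identify $\Gamma_A(1\cdot P)$ as a Schubert variety and pin down its index by showing that its $T$-fixed points are exactly $Z_A^P\cup\{1\cdot P\}$. The paper's proof is terser and simply asserts that a degree-$A$ curve through two $T$-fixed points must be $T$-invariant, whereas you justify the analogous step via Borel's fixed point theorem applied to the projective $T$-stable locus $\operatorname{ev}_1^{-1}(1\cdot P)\cap\operatorname{ev}_2^{-1}(z\cdot P)$ together with the $J$-indecomposability of $A$; your version is the more careful one.
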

\begin{proof}
The curve neighborhood $\Gamma_{A}(1\cdot P)$ is a $B$-stable
Schubert variety and thus determined by the $T$-fixed points that it
contains. Note that if there exists a $J$-holomorphic curve of
degree $A$ passing through two $T$-fixed points, the curve is
$T$-invariant. Hence, the set of $T$-fixed points in
$\Gamma_{A}(1\cdot P)$ is the set of all $T$-fixed points that can
be joined with $1\cdot P$ by a $T$-invariant curve of degree $A.$
This corresponds to the set
\begin{align*}
Z_A^{P}=\{s_\beta \cdot P\in W/W_P: \beta \in R^+-R_P^+, \,\,
\check{\beta}=\check{\alpha}+ \Z \check{S}_P\}
\end{align*}
(see Section \ref{chernline}). The set $Z_A^P$ has a unique maximal
element $z_A^P$ with respect to the Bruhat order because
$\Gamma_A(1\cdot P)$ is a Schubert variety and thus
$$
\Gamma_A(1\cdot P)=X_P(z_A^P) \qedhere
$$
\end{proof}

\section{Upper Bound for the Gromov width of Grassmannian
Manifolds}\label{generalizedgrassmannian}

Let $G$ be a compact connected simple Lie group with Lie algebra
$\mathfrak{g}.$ Let $T\subset G$ be a maximal torus and let
$B\subset G_{\mathbb{C}}$ be a Borel subgroup with
$T_{\mathbb{C}}\subset B\subset P.$ Let $W=N(T)/T$ be the associated
Weyl group. Let $R$ be the set of roots and $S$ be the choice of
simple roots compatible with $B.$ For a parabolic subgroup $P\subset
G_{\mathbb{C}}$ that contains $B,$ let $W_P$ be the Weyl group of
$P$ and $S_P$ be the subset of simple roots whose corresponding
reflections are in $W_P.$

The \textbf{maximal parabolic subgroup of $G_{\mathbb{C}}$
associated with a simple root $\alpha\in S$} is the parabolic
subgroup $P_\alpha$ such that $S_{P_\alpha}=S-\{\alpha\}.$  We call
the corresponding homogeneous space $G_{\mathbb{C}}/P_\alpha$ a
\textbf{Grassmannian manifold}. The second homology group
$H_2(G_{\mathbb{C}}/P_\alpha;\,\Z)$ is generated as a $\Z$-module by
the fundamental class $[X_{P_\alpha}(s_\alpha)].$ We denote the
class $[X_{P_\alpha}(s_\alpha)]$ by $A.$

Let $\lambda\in \mathfrak{t}^*\subset \mathfrak{g}^*.$ Let us assume
that the coadjoint orbit $\mathcal{O}_\lambda$ passing through
$\lambda$ is isomorphic with the Grassmannian manifold
$G_{\mathbb{C}}/P_\alpha$ for some $\alpha\in S.$ In this section we
will show that
$$
\operatorname{Gwidth}(\mathcal{O}_\lambda, \w_\lambda)\leq |\langle
\lambda\,, \check{\alpha} \rangle|,
$$
where $\w_\lambda$ denotes the Kostant-Kirillov-Souriau form defined
on $\mathcal{O}_\lambda.$ We obtain this upper bound by computing a
non-vanishing Gromov-Witten invariant with one of its constraints
being Poincar\'e dual to a point. More precisely, we show that if
$\Gamma_A(\operatorname{pt})$ is the degree $A$ neighborhood of a
point in $G_{\mathbb{C}}/P_\alpha,$ then
$$
\operatorname{GW}_{A,
2}^{\,\operatorname{alg}}(\operatorname{PD}[\operatorname{pt}],\operatorname{PD}[\Gamma_A(\operatorname{pt})]^{\operatorname{op}})>
0
$$
First, we give an explicit description of the degree $A$
neighborhood $\Gamma_A(1\cdot P_\alpha)$ of the point $1\cdot
P_\alpha$ when $P_\alpha\subset G_{\mathbb{C}}$ is a maximal
parabolic subgroup associated with a \textit{long} simple root
$\alpha\in S.$
\begin{Theorem}
Let $\alpha \in S$ be a simple root, $P \subset G_{\mathbb{C}}$ be
the maximal parabolic subgroup associated with $\alpha$ and $A$ be
the Schubert class that cyclic generates $H_2(G_{\mathbb{C}}/P;
\Z).$ Let $N(\alpha)\subset S$ denotes the neighbors of $\alpha$ in
the Dynkin diagram of $G.$ Let $R\subset G_{\mathbb{C}}$ be the
parabolic subgroup with $S_{R}=S-(N(\alpha)\cup \{\alpha\}).$ If
$\alpha$ is a long simple root, then the degree $A$ neighborhood
$\Gamma_A(1\cdot P)$ of the point $1\cdot P \in G_{\mathbb{C}}/P$ is
a $B$-stable Schubert variety and
$$
\Gamma_A(1\cdot P)=X_{P}(w_p^{r}s_\alpha),
$$
where $w_p^{r}$ is the longest element in the set of minimum length
representatives of cosets in $W_P/W_R.$
\end{Theorem}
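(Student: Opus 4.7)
The plan is to combine Lemma~\ref{fixedpoints}---which identifies $\Gamma_A(1\cdot P)$ with the Schubert variety $X_P(z_A^P)$ where
\[
z_A^P = \max\nolimits_{\leq_B}\bigl\{s_\beta W_P \in W/W_P : \beta \in R^+ - R_P^+,\ \check{\beta} \equiv \check{\alpha} \pmod{\Z\check{S}_P}\bigr\}
\]
---with a careful identification of this maximum as $w_p^{r} s_\alpha$. First I would rewrite the congruence $\check{\beta}\equiv\check{\alpha}\pmod{\Z\check{S}_P}$ using $\check{\beta}=\sum_{\gamma\in S} m_\gamma^\beta \tfrac{(\gamma,\gamma)}{(\beta,\beta)}\check{\gamma}$ together with $S_P=S-\{\alpha\}$; the coefficient of $\check{\alpha}$ on the left must equal $1$, i.e.\ $m_\alpha^\beta(\alpha,\alpha)=(\beta,\beta)$, and since $\alpha$ is long this forces $\beta$ itself to be long with $m_\alpha^\beta=1$. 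Let $L_\alpha$ denote this set of roots.

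Next I would use the defining property $S_R = S-(N(\alpha)\cup\{\alpha\})$ to show $\mathrm{Stab}_{W_P}(\alpha)=W_R$: every $\gamma\in S_R$ is non-adjacent to $\alpha$ in the Dynkin diagram and hence fixes it, giving $W_R\subseteq\mathrm{Stab}_{W_P}(\alpha)$, while any element of $W_P$ that moves $\alpha$ must involve some reflection $s_\gamma$ with $\gamma\in N(\alpha)$. Thus $W_P\cdot\alpha\cong W_P/W_R$, and a short induction on height shows that this orbit equals $L_\alpha$: for $\beta\in L_\alpha$ with $\beta\ne\alpha$, the inequality $(\beta,\beta-\alpha)=(\beta,\beta)-(\alpha,\beta)>0$---valid because two distinct long roots satisfy $(\alpha,\beta)<(\alpha,\alpha)$---yields some $\gamma\in S_P$ with $\langle\beta,\check{\gamma}\rangle>0$, reducing $\beta$ to $s_\gamma\beta\in L_\alpha$ of strictly smaller height. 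Setting $\beta_{\max}:=w_p^{r}(\alpha)$, the identity $s_{\beta_{\max}}=w_p^{r}\,s_\alpha\,(w_p^{r})^{-1}$ together with $(w_p^{r})^{-1}\in W_P$ gives $s_{\beta_{\max}}W_P=w_p^{r}\,s_\alpha\,W_P\in Z_A^P$. To verify $w_p^{r}s_\alpha\in W^P$, for each $\gamma\in S_P$ I would check that $(w_p^{r}s_\alpha)(\gamma)>0$: if $\gamma\in S_R$ then $s_\alpha(\gamma)=\gamma$ and $w_p^{r}\in W^R\cap W_P$ sends $\gamma$ to a positive root; if $\gamma\in N(\alpha)$ then $s_\alpha(\gamma)=\gamma-\langle\gamma,\check{\alpha}\rangle\alpha\in R^+-R_P^+$ is preserved as positive by $w_p^{r}$.

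The remaining and main obstacle is maximality: $s_\beta W_P\leq_B w_p^{r}s_\alpha W_P$ for every $\beta\in L_\alpha$. My approach is to show that the map
\[
W_P/W_R \longrightarrow Z_A^P\subset W/W_P, \qquad wW_R \longmapsto w s_\alpha W_P
\]
(well-defined because $W_R$ fixes $\alpha$ and hence $W_R s_\alpha\subseteq s_\alpha W_P$) is order-preserving with respect to the Bruhat orders on minimum-length representatives. Since $w_p^{r}$ is by construction the Bruhat-maximum of $W_P/W_R$, its image $w_p^{r}s_\alpha W_P$ is then the Bruhat-maximum of the image, which by the previous paragraph coincides with $Z_A^P$. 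The order-preservation reduces to verifying that each Bruhat cover $uW_R\to s_\gamma uW_R$ in $W_P/W_R$ (with $\gamma\in S_P$) induces $u s_\alpha W_P\leq_B s_\gamma u s_\alpha W_P$ in $W/W_P$, which follows from the strong exchange condition combined with the positivity analysis of the previous paragraph. Combining these steps yields $z_A^P=w_p^{r}s_\alpha$, and hence $\Gamma_A(1\cdot P)=X_P(w_p^{r}s_\alpha)$.
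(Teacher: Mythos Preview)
Your argument is correct and takes a genuinely different route from the paper's. The paper proceeds geometrically: it invokes the nontrivial external fact (Landsberg--Manivel, Strickland) that for a long simple root $\alpha$ the group $G_{\mathbb{C}}$ acts transitively on $\overline{\mathcal{M}}_{A,0}(G_{\mathbb{C}}/P,J)$ and $\overline{\mathcal{M}}_{A,1}(G_{\mathbb{C}}/P,J)$, identifies these moduli spaces with $G_{\mathbb{C}}/Q$ and $G_{\mathbb{C}}/R$, rewrites $\Gamma_A(1\cdot P)$ as the push--pull $\pi_{p*}\pi_q^{*}\pi_{q*}\pi_p^{*}(1\cdot P)$, and then applies the appendix Lemma~\ref{echeck} to compute this as $X_P(w_p^{r}s_\alpha)$. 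Your approach instead stays entirely inside root-system combinatorics: starting from Lemma~\ref{fixedpoints}, you identify the relevant set of roots $L_\alpha$ as the $W_P$-orbit of $\alpha$ via a height-reduction argument (this is where longness enters), realise $Z_A^P$ as the image of an order-preserving map $W_P/W_R\to W/W_P$, and read off the maximum. This is more elementary and self-contained---no moduli-space homogeneity results are needed---whereas the paper's proof is shorter but imports heavier geometry. One small remark: your justification of order-preservation (``strong exchange combined with positivity'') is slightly imprecise, and your restriction to simple $\gamma\in S_P$ for covers is not quite right since Bruhat covers come from arbitrary reflections; the cleanest fix is the subword criterion---for $u\leq_B v$ in $W_P$, any reduced $S_P$-word for $v$ contains one for $u$, and appending $s_\alpha$ to both yields reduced words (since $l(ws_\alpha)=l(w)+1$ for all $w\in W_P$) witnessing $us_\alpha\leq_B vs_\alpha$.
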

\begin{proof}
Let $J$ be the invariant complex structure defined on
$G_{\mathbb{C}}/P.$ For any nonnegative integer $k,$ we denote by
$\overline{\mathcal{M}}_{A,k}(G_{\mathbb{C}}/P, J)$ the moduli space
of equivalent classes of stable $J$-holomorphic curves of degree $A$
with $k$-marked points.

Let $f:\overline{\mathcal{M}}_{A,1}(G_{\mathbb{C}}/P, J)\to
\overline{\mathcal{M}}_{A,0}(G_{\mathbb{C}}/P, J)$ be the forgetful
map that maps a class $[u; (\cpi; z)]$  to $[u]$ and
$\operatorname{ev}_J^1:\overline{\mathcal{M}}_{A,1}(G_{\mathbb{C}}/P,
J)\to G_{\mathbb{C}}/P$ be the evaluation map that maps a class $[u;
(\cpi; z)]$ to $u(z).$ We have a diagram of arrows
$$
\centering
\begin{diagram}
\node{\overline{\mathcal{M}}_{A,1}(G_{\mathbb{C}}/P, J)} \arrow{e,t}{f} \arrow{s,l}{\operatorname{ev}_J^1} \node{\overline{\mathcal{M}}_{A,0}(G_{\mathbb{C}}/P, J)}\\
\node{G_{\mathbb{C}}/P} \\
\end{diagram}
$$
Note that the curve neighborhood $\Gamma_A(1\cdot P)$ of the point
$1\cdot P\in G_{\mathbb{C}}/P$ is the same as
\begin{align*}
\Gamma_A(1\cdot
P)=\operatorname{ev}_{J*}^1(f^{*}(f_*(\operatorname{ev}^{1*}_{J}(1\cdot
P))))
\end{align*}
Let $Q\subset G_{\mathbb{C}}$ be the parabolic subgroup with
$S_{Q}=S-N(\alpha).$ The complex group $G_{\mathbb{C}}$ acts
holomorphically on $G_{\mathbb{C}}/P$ and trivially on
$H_*(G_{\mathbb{C}}/P; \Z),$ as a consequence there is a group
action of $G_{\mathbb{C}}$ on
$\overline{\mathcal{M}}_{A,k}(G_{\mathbb{C}}/P, J).$ This action is
transitive when $k=0,1$ and $\alpha$ is a long simple root. Under
this action, the moduli spaces
$\overline{\mathcal{M}}_{A,0}(G_{\mathbb{C}}/P, J),\,
\overline{\mathcal{M}}_{A,1}(G_{\mathbb{C}}/P, J)$ are isomorphic to
the homogeneous spaces $G_{\mathbb{C}}/Q,\, G_{\mathbb{C}}/R,$
respectively. Via these isomorphisms, the diagram of arrows shown
above is compatible with the following diagram
\begin{center}
$$
\begin{diagram}
\node{G_{\mathbb{C}}/R} \arrow{e,t}{\pi_q} \arrow{s,l}{\pi_p} \node{G_{\mathbb{C}}/Q}\\
\node{G_{\mathbb{C}}/P} \\
\end{diagram}
$$
\end{center}
where $\pi_p$ and $\pi_q$  denote the projection quotient maps (see
e.g. Manivel and Landsberg \cite{landsbergmanivel}, Strickland
\cite{strickland}). Thus,
\begin{align*}
\Gamma_A(1\cdot P)=\pi_{p*}(\pi_q^{*}(\pi_{q*}(\pi_p^{*}(1\cdot
P))))
\end{align*}
From Lemma \ref{echeck} in the appendix, we have that
$$
\Gamma_A(1\cdot
P)=\pi_{p*}(\pi_q^{*}(X_{Q}(w_p^{r})))=X_P(w_p^{r}s_\alpha),
$$
and we are done.
\end{proof}
Now we show that when $P\subset G_{\mathbb{C}}$ is a maximal
parabolic subgroup and $A$ is the class that cyclic generates
$H_2(G_{\mathbb{C}}/P; \Z),$ the Gromov-Witten invariant
$\operatorname{GW}_{A,
2}^{\,\operatorname{alg}}\bigl(\operatorname{PD}[1\cdot
P],\operatorname{PD}[\Gamma_A(1\cdot P)^{\operatorname{op}}]\bigr) $
is different from zero.

\begin{Theorem}\label{alexcastro2}
Let $P\subset G_{\mathbb{C}}$ be a maximal parabolic subgroup, $A$
be the Schubert class that cyclic generates $H_2(
G_{\mathbb{C}}/P;\, \Z)$ and $\Gamma_A(1\cdot P)$ be the degree $A$
neighborhood of $1\cdot P.$ Then
$$
\operatorname{GW}_{A,2}^{\,\operatorname{alg}}\bigl(\operatorname{PD}[1\cdot
P],\operatorname{PD}[\Gamma_A(1\cdot P)^{\operatorname{op}}]\bigr)=1
$$
\end{Theorem}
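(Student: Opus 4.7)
My plan is to apply the formula of Buch and Mihalcea stated in the remark following Lemma \ref{alexcastro}, specialized to $Z_1 = \{1 \cdot P\}$ and $Z_2 = \Gamma_A(1 \cdot P)^{\operatorname{op}}$. That formula gives $\operatorname{GW}_{A,2}^{\,\operatorname{alg}}(\operatorname{PD}[Z_1], \operatorname{PD}[Z_2]) = 1$ as soon as two conditions are checked: first, that $[\Gamma_A(Z_1)]$ is the Schubert class opposite to $[Z_2]$, which holds tautologically since $Z_2$ is by definition the opposite Schubert variety to $\Gamma_A(Z_1) = \Gamma_A(1\cdot P)$; and second, the dimensional identity $c_1(A) = 1 + \dim_{\mathbb{C}} \Gamma_A(1 \cdot P)$. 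Thus the whole theorem reduces to verifying this single dimensional identity.

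To verify it, I would combine Lemma \ref{fixedpoints}, which realizes $\Gamma_A(1 \cdot P)$ as the Schubert variety $X_P(z_A^P)$ of complex dimension $l(z_A^P)$, with the Chern number formula $c_1(A) = \sum_{\gamma \in R^+ - R^+_P} \langle \gamma, \check{\alpha} \rangle$ coming from Proposition \ref{chern} and the identification of $A$ with $\check{\alpha} + \mathbb{Z}\check{S}_P$. For long simple roots $\alpha$, the immediately preceding theorem identifies $z_A^P = w_p^r s_\alpha$ explicitly, so $l(z_A^P) = l(w_p^r) + 1$ can be read off the fibration $G_{\mathbb{C}}/R \to G_{\mathbb{C}}/P$ used in its proof, and the identity follows by a direct length count. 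For short simple roots, I would analyze the set $Z_A^P$ by exploiting the bijection between its elements and positive roots $\beta \in R^+ - R^+_P$ satisfying $\check{\beta} \equiv \check{\alpha} \pmod{\mathbb{Z}\check{S}_P}$, and then carry out a case-by-case verification using the Dynkin data of the pair $(G, \alpha)$.

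The main obstacle will be this root-theoretic computation in the short-root situations, since there no clean geometric analogue of the long-root theorem is at hand. A minor subtlety that appears implicitly is the $J$-indecomposability of $A$: since $A$ generates $H_2(G_{\mathbb{C}}/P; \mathbb{Z})$ cyclically, every effective class is a positive multiple of $A$, and no decomposition $A = B_1 + \cdots + B_l$ with $l \geq 2$ and each $B_i$ effective and nonzero is possible. Once the dimensional identity is in hand, the Buch-Mihalcea formula delivers the value $1$ immediately.
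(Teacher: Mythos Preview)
Your proposal is correct and follows the same overall strategy as the paper: reduce to the single dimensional identity $c_1(A)=1+\dim_{\mathbb{C}}\Gamma_A(1\cdot P)$, then verify it by a type-by-type analysis, with the long-root case handled exactly as you describe via the preceding theorem and the fibration $G_{\mathbb{C}}/R\to G_{\mathbb{C}}/P$.

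The difference lies in the short-root cases. You propose to compute $l(z_A^P)$ directly from the set $Z_A^P$ using Dynkin data for every pair $(G,\alpha)$ with $\alpha$ short. The paper instead dispatches most of these cases by geometric reductions: type $B$ Grassmannians are identified with type $D$ Grassmannians (hence long-root), the $G_2$ short-root homogeneous space is realized as a quadric in $\mathbb{CP}^6$ and hence as an $SO(7,\mathbb{C})$-homogeneous space (again long-root), and type $C$ isotropic Grassmannians $IG(k,2n)$ are handled by an explicit description of degree-$A$ curves and of the moduli space $\overline{\mathcal{M}}_{A,0}$ as a flag-like variety, from which both $\dim\Gamma_A(\mathrm{pt})=2n-k$ and $c_1(A)=2n-k+1$ are read off. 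Only for the two short roots of $F_4$ does the paper carry out the Hasse-diagram computation you have in mind. Your uniform root-theoretic plan would work, but for the classical families $B_n$ and $C_n$ it requires an argument uniform in $n$, which is more laborious than the paper's one-line reductions; on the other hand, your approach avoids invoking the isomorphisms $\overline{\mathcal{M}}_{A,k}\cong G_{\mathbb{C}}/R,\,G_{\mathbb{C}}/Q$ and the external Landsberg--Manivel/Strickland references. A small point in your favor: you invoke the Buch--Mihalcea formula from the Remark, which yields the value $1$ directly, whereas the paper cites Lemma~\ref{alexcastro}, which literally only gives $>0$.
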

\begin{proof}
According to Lemma \ref{alexcastro}, it is enough to check that the
curve neighborhood $\Gamma_A(1\cdot P)$ satisfies the dimensional
constrain
$$
c_1(A)=1+\dim_{\mathbb{C}}(\Gamma_A(1\cdot P))
$$
We split the proof in several cases:

\setlength{\leftmargini}{1em}
\begin{itemize}

\item Long root case: Assume that $P\subset G_{\mathbb{C}}$ is a maximal parabolic subgroup associated with a
long simple root $\alpha\in S.$ Let $R\subset G_{\mathbb{C}}$ be the
parabolic subgroup with $S_{R}=S-(N(\alpha)\cup \{\alpha\}),$ where
$N(\alpha)\subset S$ denotes the neighbors of $\alpha$ in the Dynkin
diagram of $G.$ By the previous Theorem and Lemma \ref{echeck} in
the appendix,
$$
\dim_{\mathbb{C}}{\Gamma_A(1\cdot P)}=l(w_{p}^{r}s_\alpha),
$$
where $w_p^{r}$ is the longest element in the set of minimum length
representatives of cosets in $W_P/W_R.$ We have that
\begin{align*}
l(w_{p}^{r}s_\alpha)&=l(w_{p}^{r})+1=\dim_{\mathbb{C}}(G_{\mathbb{C}}/R)-\dim_{\mathbb{C}}(G_{\mathbb{C}}/P)+1
\\&=\dim_{\mathbb{C}}{\overline{\mathcal{M}}_{A,1}(G_{\mathbb{C}}/P)}
-\dim_{\mathbb{C}}(G_{\mathbb{C}}/P)+1\end{align*} So in conclusion,
\begin{align*}
\dim_{\mathbb{C}}{\Gamma_A(1\cdot P)^{\operatorname{op}}}&=
2\dim_{\mathbb{C}}{G_{\mathbb{C}}/P}-\dim_{\mathbb{C}}{\overline{\mathcal{M}}_{A,1}(G_{\mathbb{C}}/P)}
-1\\&=2\dim_{\mathbb{C}}{G_{\mathbb{C}}/P}-\dim_{\mathbb{C}}{\overline{\mathcal{M}}_{A,2}(G_{\mathbb{C}}/P)},
\end{align*}
and we are done

\item Type B and D Grassmannians: For a positive integer $m,$ we will write $m$ as $2n$ if $m$ is
even, and as $2n+1$ if $m$ is odd (here $n$ is a non-negative
integer number). Let $SO(m, \C)$ be the group of complex special
orthogonal matrices which preserves the standard nondegenerate
symmetric bilinear form defined on $\C^m.$ Let $\{e_1, \cdots,
e_n\}$ be the standard basis of $\R^n.$

The standard root system for the group $SO(2n+1, \C)$ is identified
with the set of vectors $R=\{\pm e_i,\, \pm(e_j\pm e_k):\, j\ne
k\}_{1\leq i, j \leq n}\subset \R^n$ with a choice of simple roots
given by $S=\{\alpha_1=e_1-e_2, \cdots, \alpha_{n-1}=e_{n-1}-e_n,
\alpha_n=e_n\},$ and Dynkin diagram
\begin{center}
\includegraphics{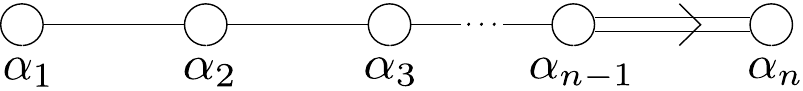}
\end{center}
The standard root system for the group $SO(2n, \C)$ is identified
with the set of vectors $R=\{\pm(e_j\pm e_k):  \ j\ne k\}_{1\leq i,
j \leq n}$ with simple roots given by $S=\{\alpha_1=e_1-e_2, \cdots,
\alpha_{n-1}=e_{n-1}-e_n, \alpha_n=e_{n-1}+e_n\}$ and with Dynkin
diagram
\begin{center}
\includegraphics{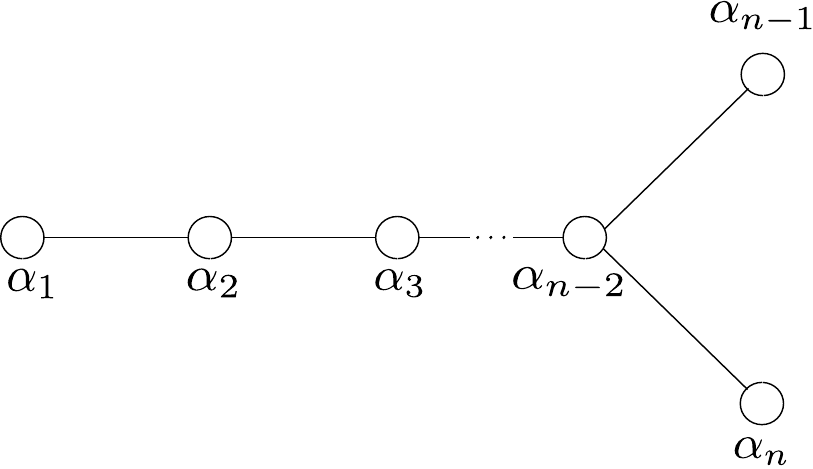}
\end{center}

Let $k\leq m/2$ be a positive integer. We denote by $OG(k, m)$ the
\textbf{Orthogonal Grassmannian manifold} of $k$-dimensional
isotropic subspaces in $\C^{m}$ with respect to the standard
nondegenerate symmetric bilinear form defined on $\C^m.$

When $k\ne m/2,$ the group $SO(m, \C)$ acts transitively on $OG(k,
m)$ and the orthogonal Grassmannannian $OG(k,m)$ is isomorphic to
the quotient $SO(m, \C)/P_{\alpha_k}.$ The two orthogonal
Grassmannians $OG(k, 2n)$ and $OG(k, 2n+1)$ are isomorphic.

When $k=m/2=n,$ the orthogonal Grassmannian $OG(n, 2n)$ is the union
of two $SO(2n, \C)$-orbits. These two $SO(2n, \C)$-orbits are
isomorphic to $SO(2n, \C)/P_{\alpha_{n-1}}$ and  $SO(2n,
\C)/P_{\alpha_n}.$ The two $SO(2n, \C)$-orbits of the orthogonal
Grassmannnian $OG(n, 2n)$ are isomorphic to the orthogonal
Grassmannian $OG(n, 2n+1).$

In summary, Grassmannian manifolds of type $B$ can be identified
with Grassmannian manifolds of type $D.$ The statement in this case
follows from the long root case.

\item Short root case (type G): Let $G=G_2$ and $T\subset G$ be the maximal torus whose Lie
algebra $\mathfrak{t}$ is identified with $\R^2$ and such that the
set
$$
S=\Bigl\{\alpha_1=\Bigl(-\frac{3}{2}, \frac{\sqrt{3}}{2}\Bigr),
\alpha_2=(1, 0)\Bigr\} \subset \mathfrak{t}^*\cong \R^2
$$
defines a set of simple root systems for $G$ with Dynkin diagram
\begin{center}
\includegraphics{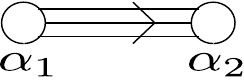}
\end{center}

Let us assume that $P\subset G_{\mathbb{C}}$ is the maximal
parabolic subgroup associated with the short simple root
$\alpha_2\in S.$ The homogeneous space $G_{\mathbb{C}}/P$ can be
considered as a homogenous space of $SO(7, \C):$ Note first that the
complex dimension of $G_{\mathbb{C}}/P$ is equal to $5.$ Let $w_1\in
\mathfrak{t}^*$ be the fundamental weight associated with
$\alpha_1.$ Let $L(w_1)=G_{\mathbb{C}}\times_P \C(w_1)$ be the line
bundle defined over $G_{\mathbb{C}}/P$ associated with the
fundamental weight $w_1.$ The irreducible representation
$H^0(G_{\mathbb{C}}/P,L(w_1))$ has dimension 7 (this computation can
be made by using for instance the Weyl dimensional formula). Thus,
$G_{\mathbb{C}}/P$ is embedded in the 6 dimensional projective space
$\mathbb{P}(H^0(G_{\mathbb{C}}/P,L(w_1)))\cong \mathbb{CP}^6.$ Under
this embedding, $G_{\mathbb{C}}/P$ is a $G_2$-homogenous
hypersurface and thus a nondegenerate quadric. A quadric in
$\mathbb{CP}^6$ is a complete homogeneous space for the special
orthogonal group $SO(7, \C).$ The result in this case follows from
the long root case for type $D$ Grassmannians.

\item Type C Grassmannians: Let $(\C^{2n}, \Omega)$ be the standard complex symplectic vector
space with complex coordinates $(z_1, \cdots, z_n, w_1, \cdots,
w_n)$ and with complex bilinear skew-symmetric form
$$
\Omega=\sum dz_i\wedge dw_i
$$
Let $Sp(n, \C)$ be the complex Lie group of linear transformation on
$\C^{2n}$ that preserves $\Omega.$ Let $\{e_1, \cdots, e_n\}$
denotes the standard basis of $\R^n.$

The standard root system of $Sp(n, \C)$ is identified with the set
$R=\{\pm e_i \pm e_j \ (i\ne j), \ \pm 2e_i\}_{1 \leq i, j\leq
n}\subset \R^n,$ with a choice of simple roots given by
$S=\{\alpha_1=e_1-e_2, \alpha_2=e_2-e_3, \cdots,
\alpha_{n-1}=e_{n-1}-e_n, \alpha_n=2e_n\}$ and Dynkin diagram
\begin{center}
\includegraphics{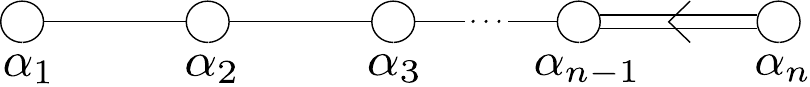}
\end{center}
For an integer $0<k\leq n,$ let $IG(k, 2n)$ denote the space of
$k$-dimensional isotropic subspaces of $\C^{2n},$ i.e,
$$
IG(k, 2n):=\{V^k\in G(k, 2n): \Omega|_{V^k}=0\}.
$$
When $k=n,$ the isotropic Grassmannian $IG(n, 2n)$ is the space of
Lagrangian subspaces of $\C^{2n}.$ The isotropic Grassmannian
manifold $IG(k, 2n)$ has dimension equal to $$2k(n-k) +
\dfrac{k(k+1)}{2}$$ and is isomorphic to the quotient of complex Lie
groups $Sp(2n, \C)/P_{\alpha_k},$ where $P_{\alpha_k}\subset Sp(n,
\C)$ denotes the maximal parabolic subgroup associated with the
simple root $\alpha_k\in S.$

Given a $J$-holomorphic curve $u:\cpi \to IG(k, 2n)$ of degree
$A:=\sigma_{P_{\alpha_k}}(s_{\alpha_k}),$ there exist linearly
independent vectors $v_0, v_1, v_2, \cdots, v_k \in \C^{2n}$ such
that
\begin{align*}
u:\cpi &\to IG(k, 2n)\\
[z_0, z_1] &\mapsto \operatorname{span}\{z_0v_0+z_0v_1, v_2, \cdots,
v_k\}
\end{align*}
In particular, for every $[z_0, z_1]\in \cpi,$ the vector subspace
$\operatorname{span}\{z_0v_0+z_0v_1, v_2, \cdots, v_k\}\subset
\C^{2n}$ is isotropic. We can associate a pair $(V^{k-1}, V^{k+1})$
of vector subspaces to the curve $u$ such that
$$
V^{k-1} \subset V^{k+1} \subset (V^{k-1})^{\Omega}\subset \C^{2n}
$$
Here, $V^{k-1}=\operatorname{span}\{v_2, \cdots, v_k\}$ and
$V^{k+1}=\operatorname{span}\{v_0, v_1, v_2, \cdots, v_k\}.$ The
pair of vector subspaces $(V^{k-1}, V^{k+1})$ determine uniquely, up
to reparametrization, the curve $u.$ This means that if $v:\cpi \to
IG(k, 2n)$ is another $J$-holomorphic of degree $A$ such that for
any $W^k\in v(\cpi)\subset IG(k, 2n)$
$$
V^{k-1} \subset  W^k \subset V^{k+1},
$$
then there exists $\varphi \in \operatorname{PSL}(2, \C)$ such that
$v\circ \varphi =u.$ This implies that the moduli space
$\overline{\mathcal{M}}_{A, 0}(IG(k, 2n), J)$ of unparameterized
$J$-holomorphic curves of degree $A$ in $IG(k, 2n)$ can be
identified with the set of pairs of subspaces
$$
\{ (V^{k-1}, V^{k+1}): V^{k-1} \subset V^{k+1} \subset
(V^{k-1})^{\Omega}\subset \C^{2n}\}
$$
Note that a pair of isotropic subspaces $V_1^k, V_2^k \in IG(k, 2n)$
are joined by a $J$-holomorphic curve of degree $A$ if
$$\dim_{\mathbb{C}}(V_1^k\cap V_2^k)=k-1 \ \ \text{and} \ \
\dim_{\mathbb{C}}(V_1^k+V_2^k)=k+1
$$
In particular, the degree $A$ neighborhood of the isotropic subspace
$\C^k\subset \C^{2n}$ is given by
$$
\Gamma_A(\C^k)=\{V^k\in IG(k, 2n):k-1\leq \dim(\C^k\cap V^k) \leq
\dim(\C^k+ V^k)\leq k+1\}
$$
We can compute the dimension of $\Gamma_A(\C^k)$ if we consider the
fibration
\begin{align*}
\Gamma_A(\C^k)-\{\C^k\} &\to  \{V^{k-1}: V^{k-1}\subset \C^k\}\cong G(k-1, k)\\
V^k &\mapsto \C^k\cap V^k
\end{align*}
The fiber of this fibration over any $(k-1)$-dimensional vector
subspace $V^{k-1}\subset \C^k$ is the set
$$
\{V^k\subset \C^{2n}: V^{k-1} \subset V^k \subset
(V^{k-1})^{\Omega}\}-\{\C^{k}\}
$$
of complex dimension $2n-2k+1.$ Thus,
$$
\dim_{\mathbb{C}}\Gamma_A(\C^k)=2n-k
$$
Likewise, the dimension of $\overline{\mathcal{M}}_{A, 0}(IG(k,
2n))$ can be computed by considering the fibration
\begin{align*}
\overline{\mathcal{M}}_{A, 0}(IG(k, 2n)) &\to IG(k-1, 2n)\\
(V^{k-1}, V^{k+1}) &\mapsto V^{k-1}
\end{align*}
This fibration has fiber isomorphic to $G(2, 2n-2k+2),$ so that
\begin{align*}
\dim_{\mathbb{C}}{\overline{\mathcal{M}}_{A, 0}(IG(k, 2n))}&=
\dim_{\mathbb{C}}{IG(k-1, 2n)}+\dim_{\mathbb{C}}{G(2,
2n-2k+2)}
\\&=\dim_{\mathbb{C}}{IG(k, 2n)}-2+2n-k,
\end{align*}
and we are done.

\item Short root case (Type F): Let $G$ be a compact Lie group of type $F_4$
and $T\subset G$ be the maximal torus whose Lie algebra is
identified with $\R^4$ and such that the set $S\subset
\mathfrak{t}^*\cong \R^4$ defined by
$$
\Bigl\{\alpha_1=(0, 1, -1, 0), \alpha_2=(0, 0, 1, -1), \alpha_3=(0,
0, 0, 1), \alpha_4=\Bigl(\dfrac{1}{2}, -\dfrac{1}{2}, -\dfrac{1}{2},
-\dfrac{1}{2}\Bigr) \Bigr\},$$ corresponds to the standard set of
simple roots of $G$ with Dynkin diagram

\begin{center}
\includegraphics{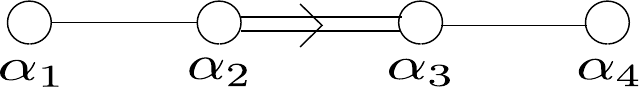}
\end{center}
Let $P\subset G_{\mathbb{C}}$ be the maximal parabolic associated to
a simple root $\alpha \in S$ and
$$
Z_A^{P}:=\{s_\beta \cdot P\in W/W_P: \beta \in R^+-R_P^+, \,\,
\check{\beta}=\check{\alpha}+ \Z \check{S}_P\}
$$
be the set of $T$-fixed points in $\Gamma_A(1\cdot P).$ According to
Lemma \ref{fixedpoints}, the set $Z_A^P$ contains a unique maximal
element $z_A^P$ with respect to the Bruhat order and
$$
\Gamma_A(1\cdot P)=X_P(z_A^P)
$$
Now we check that for the parabolic subgroup $P$ associated with
either the short simple root $\alpha_3$ or $\alpha_4$ we have that
$$
c_1(A)=\dim_{\mathbb{C}}(\Gamma_A(1\cdot P))+1=l(z_A^P)+1
$$
\begin{enumerate}
\item Let $P$ be the maximal parabolic subgroup associated with $\alpha=\alpha_4.$
The following figure shows the minimum length representatives of
cosets in $Z_A^P$ ordered with respect to the Bruhat order:
\begin{center}
\includegraphics{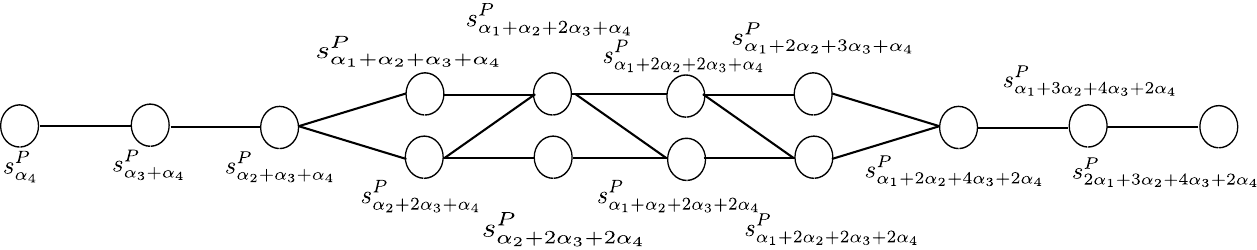}
\end{center}
The maximal element of $Z_A^P$ is the coset
$z_A^P:=s_{2\alpha_1+3\alpha_2+4\alpha_3+2\alpha_4}\cdot P.$ The
minimum length representative of $z_A^P$ in $W^P$ is
$$
s_{\alpha_1}s_{\alpha_2}s_{\alpha_3}s_{\alpha_4}s_{\alpha_2}s_{\alpha_3}s_{\alpha_1}s_{\alpha_2}s_{\alpha_3}s_{\alpha_4}
$$
with length equal to $10.$ Thus $\dim_{\mathbb{C}}\Gamma_A(1\cdot
P)=10.$ Using Proposition \ref{chern} we get
\begin{align*}
c_1(A)&=\langle c_1(T(G_{\mathbb{C}}/P)), \check{\alpha}_4
\rangle=\langle 11(\alpha_1+2\alpha_2+3\alpha_3+2\alpha_4),
\check{\alpha}_4 \rangle\\&=11(1\cdot 0+ 2\cdot 0 +3\cdot(-1)+2\cdot
2)=11\\&=\dim_{\mathbb{C}}\Gamma_A(1\cdot P)+1
\end{align*}

\item Let $P$ Be the maximal parabolic subgroup associated with $\alpha=\alpha_3.$
The Hasse diagram of minimum length representatives of cosets in
$Z_A^P$ ordered with respect to the Bruhat order is shown below
\begin{center}
\includegraphics[scale=0.9]{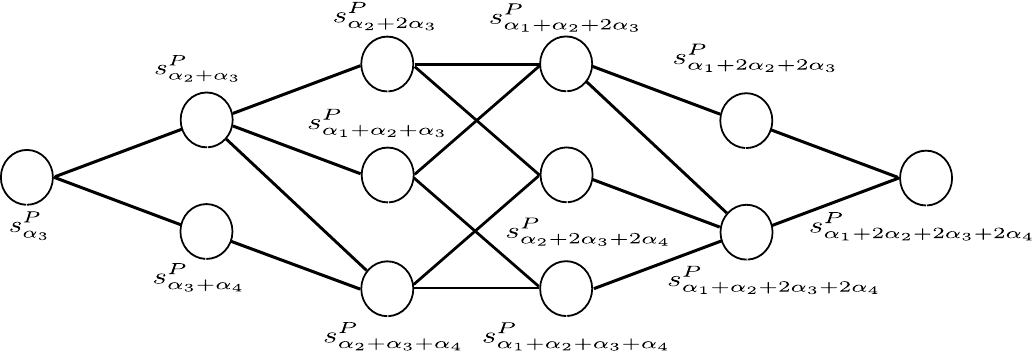}
\end{center}
The maximal element of $Z_A^P$ is
$z_A^P:=s_{\alpha_1+2\alpha_2+2\alpha_3+2\alpha_4}\cdot P.$ The
minimum length representative  of $z_A^P$ in $W^P$ is
$$
s_{\alpha_4}s_{\alpha_2}s_{\alpha_3}s_{\alpha_1}s_{\alpha_2}s_{\alpha_3}
$$
with length equal to $6.$ Thus $\dim_{\mathbb{C}}\Gamma_A(1\cdot
P)=6.$ Using Proposition \ref{chern} we get
\begin{align*} c_1(A)&=\langle
c_1(T(G_{\mathbb{C}}/P)), \check{\alpha}_3 \rangle=\langle
7(2\alpha_1+4\alpha_2+6\alpha_3+3\alpha_4),
\check{\alpha}_3\rangle\\&=7(2\cdot 0+ 4\cdot(-2)+ 6\cdot 2
+3\cdot(-1))=7\\&=\dim_{\mathbb{C}}(\Gamma_A(1\cdot P))+1,
\end{align*}
and we are done

\end{enumerate}
\end{itemize}
\end{proof}

Now we are ready to state our upper bound for the Gromov width of
Grassmannian manifolds.

\begin{Theorem}\label{alexcastro3}
Let $G$ be a compact connected simple Lie group with Lie algebra
$\mathfrak{g}.$ Let $T\subset G$ be a maximal torus with a choice of
simple roots $S\subset \mathfrak{t}^*.$ For $\lambda \in
\mathfrak{t}^* \subset \mathfrak{g}^*,$ let $\mathcal{O}_\lambda$ be
the coadjoint orbit passing through $\lambda$  and $\w_\lambda$ be
the Kostant-Kirillov-Souriau form defined on $\mathcal{O}_\lambda.$
Assume that there is a maximal parabolic subgroup $P\subset
G_{\mathbb{C}}$ associated  with a simple root $\alpha\in S$ such
that $\mathcal{O}_\lambda\cong G_{\mathbb{C}}/P,$ then
$$
\operatorname{Gwidth}(\mathcal{O}_\lambda, \w_\lambda)\leq |\langle
\lambda\, , \check{\alpha} \rangle|
$$
\end{Theorem}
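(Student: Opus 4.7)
The plan is to combine Theorem \ref{alexcastro2} with Theorem \ref{nsq4}, applied to the Schubert class $A=\sigma_P(s_\alpha)\in H_2(G_{\mathbb{C}}/P;\Z)$ that cyclic-generates $H_2$. Because $P$ is the maximal parabolic associated with $\alpha$ we have $S\setminus S_P=\{\alpha\}$, so $H_2(G_{\mathbb{C}}/P;\Z)\cong \Z\cdot A$, and by the Pl\"ucker-embedding remark of Section~3 the symplectic area satisfies $\w_\lambda(A)=\langle\lambda,\check{\alpha}\rangle$. By the Weyl-group symmetry $\mathcal{O}_\lambda=\mathcal{O}_{w\lambda}$, I may replace $\lambda$ by a $W$-translate lying in the closed positive Weyl chamber; since $\mathcal{O}_\lambda\cong G_{\mathbb{C}}/P$ forces $\alpha\notin S_P$, we then have $\langle\lambda,\check{\alpha}\rangle>0$, and the absolute value in the target inequality equals the value itself.

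Next I would check the two structural hypotheses of Theorem~\ref{nsq4} for the integrable complex structure $J$ on $G_{\mathbb{C}}/P$. For \emph{indecomposability}, note that every non-constant $J$-holomorphic sphere has effective degree $kA$ with $k\in\Z_{>0}$, so a decomposition $A=A_1+\cdots+A_l$ into non-constant spherical holomorphic classes forces $\sum k_i=1$ with each $k_i\geq 1$, impossible for $l\geq 2$; hence $A$ is $J$-indecomposable. For \emph{regularity}, the tangent bundle $T(G_{\mathbb{C}}/P)$ is globally generated by the infinitesimal $\mathfrak{g}_{\mathbb{C}}$-action, so for any $J$-holomorphic $u\colon \cpi\to G_{\mathbb{C}}/P$ the pullback $u^*T(G_{\mathbb{C}}/P)$ is globally generated on $\cpi$; a globally generated bundle on $\cpi$ splits as $\bigoplus \mathcal{O}(a_i)$ with $a_i\geq 0$, hence has $H^1=0$, so the linearized $\bar\partial$-operator is surjective and $J\in\mathcal{J}_{\operatorname{reg}}(A)$. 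With these in hand, Theorem~\ref{alexcastro2} gives $\operatorname{GW}^{\operatorname{alg}}_{A,2}(\operatorname{PD}[1\cdot P],\operatorname{PD}[\Gamma_A(1\cdot P)^{\operatorname{op}}])=1$, and the final Remark of Section~3 identifies this integer with $\operatorname{GW}^{J}_{A,2}$ whenever $\w_\lambda$ is integral; in particular the symplectic Gromov--Witten invariant with a point constraint does not vanish in the integral case.

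Finally I would reduce the general case to the integral one by rescaling. Because $P$ is maximal, $H^2(G_{\mathbb{C}}/P;\R)\cong\R$, so a cohomology class is integer precisely when its value on $A$ is an integer. Setting $c=\langle\lambda,\check{\alpha}\rangle>0$ and $\lambda'=\lambda/c$, the class $[\w_{\lambda'}]$ equals the integer generator of $H^2(G_{\mathbb{C}}/P;\Z)$, so Theorem~\ref{nsq4} applies to $(\mathcal{O}_{\lambda'},\w_{\lambda'})$ and yields $\operatorname{Gwidth}(\mathcal{O}_{\lambda'},\w_{\lambda'})\leq \w_{\lambda'}(A)=1$. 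Under the natural identification $\mathcal{O}_\lambda\cong \mathcal{O}_{\lambda'}\cong G_{\mathbb{C}}/P$ we have $\w_\lambda=c\,\w_{\lambda'}$; since the Gromov width scales linearly with the symplectic form, $\operatorname{Gwidth}(\mathcal{O}_\lambda,\w_\lambda)=c\cdot \operatorname{Gwidth}(\mathcal{O}_{\lambda'},\w_{\lambda'})\leq c=\langle\lambda,\check{\alpha}\rangle=|\langle\lambda,\check{\alpha}\rangle|$. The main technical obstacle is verifying the indecomposability and regularity of $A$ and $J$, both of which rest on standard properties of rational homogeneous spaces; the hard combinatorial ingredient---the non-vanishing of the algebraic Gromov--Witten number---is already extracted in Theorem~\ref{alexcastro2}, and the rank-one nature of $H^2(G_{\mathbb{C}}/P;\R)$ trivializes the passage from the integral to the general case.
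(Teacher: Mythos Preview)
Your argument is correct, but it takes a slightly longer route than the paper's. The paper observes that because $H_2(G_{\mathbb{C}}/P;\Z)$ is cyclic, the symplectic manifold $(\mathcal{O}_\lambda,\w_\lambda)$ is automatically \emph{semipositive}; it then cites the regularity of the integrable $J$ (McDuff--Salamon, Proposition~7.4.3), identifies $\operatorname{GW}^{\operatorname{alg}}_{A,2}$ with $\operatorname{GW}^J_{A,2}$, and applies Theorem~\ref{nsq2} directly. This works for every $\lambda$ at once, with no rationality hypothesis and no rescaling.

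You instead invoke Theorem~\ref{nsq4}, which needs a rational symplectic class together with $J$-indecomposability of $A$; you then handle the irrational case by the rescaling $\lambda\mapsto\lambda/c$, exploiting that $H^2$ has rank one. This is perfectly valid, and your justifications of indecomposability (effective degrees in $\Z_{>0}\cdot A$) and regularity (global generation of $T(G_{\mathbb{C}}/P)$ forcing $H^1(u^*T)=0$) are more explicit than the paper's citations. The trade-off is that the semipositive machinery of Theorem~\ref{nsq2} makes both the indecomposability check and the rescaling step unnecessary, so the paper's proof is shorter; your route, on the other hand, would still go through in settings where semipositivity might fail but a minimal indecomposable class is available.
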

\begin{proof}
The symplectic manifold $(\mathcal{O}_\lambda, \w_\lambda)$ is
semipositive because when $P\subset G_{\mathbb{C}}$ is a maximal
parabolic subgroup, the homology group $H_2(G/P; \Z)$ is cyclic. Let
$J$ be the complex structure defined on $(\mathcal{O}_\lambda,
\w_\lambda)$ coming from the quotient of complex Lie groups
$G_{\mathbb{C}}/P.$ The complex structure $J$ is regular (see e.g.
McDuff and Salamon \cite{mcduff}[Proposition 7.4.3]). Let $A$ be the
Schubert class that cyclic generates $H_2(G_{\mathbb{C}}/P; \Z)$ and
$\Gamma_A(1\cdot P)$ be the degree $A$ neighborhood of $1\cdot P.$
The previous Theorem states that
$$
\operatorname{GW}_{A,2}^{\,\operatorname{alg}}\bigl(\operatorname{PD}[1\cdot
P],\operatorname{PD}[\Gamma_A(1\cdot P)^{\operatorname{op}}]\bigr)=
\operatorname{GW}_{A,2}^J\bigl(\operatorname{PD}[1\cdot
P],\operatorname{PD}[\Gamma_A(1\cdot P)^{\operatorname{op}}]\bigr)=1
$$
Thus, by Lemma \ref{nsq2}
$$
\operatorname{Gwidth}(\mathcal{O}_\lambda, \w_\lambda)\leq
\w_\lambda(A)=|\langle \lambda\, , \check{\alpha} \rangle| \qedhere
$$
\end{proof}

\section{Upper bound for the Gromov width of coadjoint orbits of
compact Lie groups}

The problem of finding upper bounds for the Gromov width of
coadjoint orbits of compact Lie groups has already been addressed by
Masrour Zoghi in his Ph.D thesis \cite{zoghi} where he has
considered the problem of determining the Gromov width of
\textit{regular} coadjoint orbits of compact Lie groups. In this
section, we extend Zoghi's theorem to coadjoint orbits of compact
Lie groups that are not necessarily regular. We estimate from above
the Gromov width of arbitrary coadjoint orbits of compact Lie group
by computing Gromov-Witten invariants on holomorphic fibrations
whose fibers are isomorphic to Grassmannian manifolds.

Let $G$ be a compact Lie group, $\mathfrak{g}$ be its Lie algebra
and $\mathfrak{g}^*$ be the dual of this Lie algebra. Let
$\lambda\in \mathfrak{g}^*$ and $\mathcal{O}_\lambda\subset
\mathfrak{g}^*$ be the coadjoint orbit passing through $\lambda.$
Let us assume that $\mathcal{O}_\lambda \cong G_{\mathbb{C}}/P,$
where $P\subset G_{\mathbb{C}}$ is a parabolic subgroup of
$G_{\mathbb{C}}.$ We endow $G_{\mathbb{C}}/P$ with a K\"ahler
structure coming from its identification with $\mathcal{O}_\lambda.$
This K\"ahler structure and the one defined on $\mathcal{O}_\lambda$
would be denoted indistinguishably by $(\w_\lambda, J).$

Let $T\subset G$ be a maximal torus and let $B\subset
G_{\mathbb{C}}$ be a Borel subgroup with $T_{\mathbb{C}}\subset
B\subset P.$ Let $W=N(T)/T$ be the associated Weyl group. Let $R$ be
the corresponding set of roots and $S$ be the corresponding system
of simple roots. Let $W_P$ be the Weyl group of $P$ and $S_P$ be the
subset of simple roots whose corresponding reflections are in $W_P.$

The second homology group $H_2(G_{\mathbb{C}}/P; \Z)$ is freely
generated as a $\Z$-module by the set of Schubert classes
$\{[X_P(s_\alpha)]\}_{\alpha\in S-S_P}.$ We denote the Schubert
class $[X_P(s_\alpha)]$ by $A_\alpha.$ The symplectic area of
$A_\alpha$ is equal to $\w_\lambda(A_\alpha)=|\langle \lambda\,,
\check{\alpha} \rangle|.$

Now we show that for every class $A_\alpha$ there is a non-vanishing
Gromov-Witten invariant with one its constrains being Poincar\'e
dual to the class of a point.

\begin{Theorem}\label{alexcastro4}
Let $P\subset G_{\mathbb{C}}$ be a parabolic subgroup, $A_\alpha\in
H_2( G_{\mathbb{C}}/P, \Z)$ be the class associated with a simple
root $\alpha\in S-S_P$ and $\Gamma_{A_\alpha}(1\cdot P)$ be the
degree $A_\alpha$ neighborhood of $1\cdot P.$ Then
$$
\operatorname{GW}_{A_{\alpha},2}^{\,\operatorname{alg}}\bigl(\operatorname{PD}[1\cdot
P],\operatorname{PD}[\Gamma_{A_\alpha}(1\cdot
P)^{\operatorname{op}}]\bigr)=1
$$
\end{Theorem}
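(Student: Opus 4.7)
The plan is to apply the Buch--Mihalcea formula from the Remark following Lemma \ref{alexcastro}. Taking $Z_1 = \{1 \cdot P\}$ and $Z_2 = \Gamma_{A_\alpha}(1 \cdot P)^{\operatorname{op}}$, the formula gives
$$\operatorname{GW}_{A_\alpha,2}^{\,\operatorname{alg}}\bigl(\operatorname{PD}[1\cdot P],\, \operatorname{PD}[\Gamma_{A_\alpha}(1\cdot P)^{\operatorname{op}}]\bigr) = 1$$
as soon as one verifies the dimensional identity
$$c_1(A_\alpha) \ =\ 1 + \dim_{\mathbb{C}} \Gamma_{A_\alpha}(1 \cdot P),$$
since the other condition, namely that $[\Gamma_{A_\alpha}(1 \cdot P)]$ be Schubert-opposite to $[\Gamma_{A_\alpha}(1 \cdot P)^{\operatorname{op}}]$, is tautological. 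The entire theorem thus reduces to this single numerical identity.

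Two easy preliminaries. First, $A_\alpha$ is $J$-indecomposable, because the effective cone of $H_2(G_{\mathbb{C}}/P;\Z)$ is the free $\Z_{\geq 0}$-monoid on the basis $\{A_\beta\}_{\beta \in S \setminus S_P}$, so a generator cannot decompose. Second, the proof of Lemma \ref{fixedpoints} carries over verbatim to an arbitrary parabolic $P$, yielding $\Gamma_{A_\alpha}(1 \cdot P) = X_P(z_{A_\alpha}^P)$, where $z_{A_\alpha}^P$ is the Bruhat-maximum element of
$$Z_{A_\alpha}^P \ :=\ \bigl\{s_\beta\, W_P\,:\, \beta \in R^+ \setminus R_P^+,\ \check{\beta} \equiv \check{\alpha}\ (\operatorname{mod}\ \Z\check{S}_P)\bigr\}.$$
Consequently $\dim_{\mathbb{C}} \Gamma_{A_\alpha}(1 \cdot P) = l(z_{A_\alpha}^P)$, while Proposition \ref{chern} gives $c_1(A_\alpha) = 2 - \sum_{\gamma \in R_P^+} \gamma(\check{\alpha})$.

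To establish the dimensional identity, I would reduce to the maximal parabolic case already treated in Theorem \ref{alexcastro2}, via the $G_{\mathbb{C}}$-equivariant fibration $\pi:G_{\mathbb{C}}/P \to G_{\mathbb{C}}/P_\alpha$ coming from the inclusion $P \subset P_\alpha$, where $P_\alpha$ is the maximal parabolic with $S_{P_\alpha} = S \setminus \{\alpha\}$. Under $\pi_*$ the class $A_\alpha$ maps to the cyclic generator $A$ of $H_2(G_{\mathbb{C}}/P_\alpha)$, so Theorem \ref{alexcastro2} yields the analogous identity $c_1(A) = 1 + l(z_A^{P_\alpha})$ on the base. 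A short root-theoretic calculation from Proposition \ref{chern} gives
$$c_1(A) - c_1(A_\alpha) \ =\ -\sum_{\gamma \in R_{P_\alpha}^+ \setminus R_P^+}\gamma(\check{\alpha}),$$
so the desired identity is equivalent to the Weyl-group length formula $l(z_A^{P_\alpha}) - l(z_{A_\alpha}^P) = -\sum_{\gamma \in R_{P_\alpha}^+ \setminus R_P^+} \gamma(\check{\alpha})$.

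The main obstacle is this length formula: one must match the Bruhat-theoretic contribution of the fiber directions of $\pi$ against the root-theoretic correction on the right. My plan is to analyze how the $T$-fixed points in $Z_{A_\alpha}^P$ descend under $\pi$ to $T$-fixed points in $Z_A^{P_\alpha}$ and to track how the Bruhat-maxima transform, identifying $z_{A_\alpha}^P$ as the minimal representative of a product $z_A^{P_\alpha}\cdot w$ for an explicit element $w$ in the Levi of $P_\alpha$. For non--simply-laced types the correspondence $\beta \mapsto \check{\beta}$ introduces integer rescalings that must be handled separately, mirroring the long-root/short-root split of Theorem \ref{alexcastro2}'s proof.
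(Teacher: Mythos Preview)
Your overall strategy---reduce everything to the single dimensional identity
\[
c_1(A_\alpha)\;=\;1+\dim_{\mathbb{C}}\Gamma_{A_\alpha}(1\cdot P)
\]
and then invoke the maximal-parabolic case of Theorem~\ref{alexcastro2}---matches the paper's. The divergence is in \emph{which} fibration you use to make the reduction, and your choice leaves the argument incomplete.

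You project along $\pi:G_{\mathbb{C}}/P\to G_{\mathbb{C}}/P_\alpha$ with $S_{P_\alpha}=S\setminus\{\alpha\}$, so that $\pi_*(A_\alpha)=A\neq 0$. Curves of degree $A_\alpha$ then genuinely move in the base, and relating $\Gamma_{A_\alpha}(1\cdot P)$ to $\Gamma_A(1\cdot P_\alpha)$ forces you into the Bruhat length formula
\[
l(z_A^{P_\alpha})-l(z_{A_\alpha}^P)\;=\;-\sum_{\gamma\in R_{P_\alpha}^+\setminus R_P^+}\gamma(\check{\alpha}),
\]
which you do not prove and which has no obvious direct argument; your sketch (track how the $T$-fixed points descend and identify $z_{A_\alpha}^P$ as a minimal representative of a product) is not a proof, and the short-root subtleties you anticipate are real.

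The paper fibers in the \emph{opposite} sense: it takes $Q\supset P$ with $S_Q=S_P\cup\{\alpha\}$ and projects $\pi_\alpha:G_{\mathbb{C}}/P\to G_{\mathbb{C}}/Q$. Now $\pi_{\alpha*}(A_\alpha)=0$, so every holomorphic sphere of degree $A_\alpha$ is trapped in a single fiber $Q/P$, which is itself a Grassmannian of a simple group of smaller rank. Hence $\Gamma_{A_\alpha}(1\cdot P)$ is literally the degree-$A$ curve neighborhood inside that fiber, and the short exact sequence
\[
0\to T(Q/P)\to T(G_{\mathbb{C}}/P)|_{Q/P}\to Q/P\times\mathfrak{g}_{\mathbb{C}}/\mathfrak{q}\to 0
\]
gives $c_1(T(G_{\mathbb{C}}/P))(A_\alpha)=c_1(T(Q/P))(A)$. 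Both sides of the dimensional identity therefore reduce \emph{immediately} to the already-established maximal-parabolic case, with no length computation whatsoever. The moral: choose the fibration that kills $A_\alpha$, not the one that preserves it.
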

\begin{proof}
We show that
$$
c_1(T(G_{\mathbb{C}}/P))(A_\alpha)=1+\dim_{\mathbb{C}}(\Gamma_{A_\alpha}(1\cdot
P)),
$$
and the result will follow from Theorem \ref{alexcastro}.

For the simple root $\alpha \in S-S_P,$ we have a parabolic subgroup
$Q\subset P$ with $S_Q=S_P\sqcup \{\alpha\}$ and a fibration
$$
\pi_\alpha:G_{\mathbb{C}}/P \to G_{\mathbb{C}}/Q.
$$
The fibration $\pi_\alpha$ is holomorphic with respect to the
complex structures defined on the quotients of complex Lie groups
$G_{\mathbb{C}}/P, G_{\mathbb{C}}/Q.$

The fiber $Q/P$ can be identified with the quotient of a simple
complex Lie group and a maximal parabolic subgroup. We also have an
inclusion map
$$
\iota_\alpha: Q/P  \hookrightarrow G_{\mathbb{C}}/P.
$$
Let $A$ be the Schubert class that cyclic generates $H_2(Q/P; \Z).$
Note that $\iota_{\alpha*}(A)=A_\alpha \in H_2(G_{\mathbb{C}}/P;\,
\Z)$ and $\pi_{\alpha*}(A_\alpha)=0\in H_2(G_{\mathbb{C}}/Q; \,
\Z).$

We have an exact sequence of vector bundles over $Q/P$
$$
0\to T(Q/P) \xrightarrow{d\iota_\alpha} T(G_{\mathbb{C}}/P)|_{Q/P}
\xrightarrow{d\pi_\alpha} Q/P\times
\mathfrak{g}_{\mathbb{C}}/\mathfrak{q} \to 0
$$
Thus,
$$
c_1(T(G_{\mathbb{C}}/P)|_{Q/P})=c_1(T(Q/P))+c_1(
Q/P\times\mathfrak{g}_{\mathbb{C}}/\mathfrak{q})=c_1(T(Q/P))
$$
The projection formula of Chern classes implies that
\begin{align*}
c_1(T(G_{\mathbb{C}}/P)|_{Q/P})(A)&=c_1(\iota_\alpha^*(T(G_{\mathbb{C}}/P)))(A)\\&=c_1(T(G_{\mathbb{C}}/P))(\iota_{\alpha*}(A))\\&=
c_1(T(G_{\mathbb{C}}/P))(A_\alpha)
\end{align*}
and we get
$$
c_1(T(Q/P))(A)=c_1(T(G_{\mathbb{C}}/P))(A_\alpha)
$$
Now we describe the degree $A_\alpha$ neighborhood
$\Gamma_{A_{\alpha}}(1\cdot P)$ of $1\cdot P\in G_{\mathbb{C}}/P.$
Let $u:\cpi \to G_{\mathbb{C}}/P$ be a $J$-holomorphic map of degree
$A_\alpha.$ The map
$$
\pi_\alpha\circ u:\cpi \to G_{\mathbb{C}}/Q
$$
is holomorphic and its degree is equal to $(\pi_\alpha\circ
u)_*[\cpi]=(\pi_\alpha)_*[A_\alpha]=0\in H_2(G_{\mathbb{C}}/Q; \Z).$
Therefore, the map $\pi\circ u:\cpi \to G_{\mathbb{C}}/Q$ is
constant and the image of $u:\cpi \to G_{\mathbb{C}}/P$ is totally
contained in a fiber of $\pi_\alpha:G_{\mathbb{C}}/P \to
G_{\mathbb{C}}/Q.$ If the curve $u$ passes through $1\cdot P\in
G_{\mathbb{C}}/P,$ we can identify $u$ with a curve of degree $A$ in
the fiber $Q/P.$ As a consequence, the degree $A_\alpha$
neighborhood $\Gamma_{A_\alpha}(1\cdot P)\subset G_{\mathbb{C}}/P$
can be identified with the degree $A$ neighborhood $\Gamma_A(1\cdot
P)\subset Q/P,$ and in particular they share the same dimension. By
the proof of Theorem \ref{alexcastro2},
$$
c_1(T(Q/P))(A)=1+\dim_{\mathbb{C}}(\Gamma_A(1\cdot P)),
$$
and thus
$$
c_1(T(G_{\mathbb{C}}/P))(A_\alpha)=1+\dim_{\mathbb{C}}(\Gamma_{A_\alpha}(1\cdot
P)),
$$
and we are done.
\end{proof}

The result that follows is the main theorem of this paper and gives
an upper bound for the Gromov width of coadjoint orbits of compact
Lie groups that are not necessarily regular.
\begin{Theorem}
Let $G$ be a compact connected simple Lie group with Lie algebra
$\mathfrak{g}.$ Let $T\subset G$ be a maximal torus and let
$\check{R}\subset \mathfrak{t}$ be the corresponding system of
coroots. We identify the dual Lie algebra $\mathfrak{t}^*$ with the
fixed points of the coadjoint action of $T$ on $\mathfrak{g}^*.$ Let
$\lambda \in \mathfrak{t}^* \subset \mathfrak{g}^*,$
$\mathcal{O}_\lambda$ be the coadjoint orbit passing through
$\lambda$  and $\w_\lambda$ be the Kostant--Kirillov--Souriau form
defined on $\mathcal{O}_\lambda,$ then
\[
\operatorname{Gwidth}(\mathcal{O}_\lambda, \w_\lambda) \leq
\min_{\substack{\check{\alpha} \in \check{R} \\ \langle \lambda,
\check{\alpha} \rangle\ne 0}} |\langle \lambda, \check{\alpha}
\rangle|
\]
\end{Theorem}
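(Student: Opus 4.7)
The plan is to combine Theorem \ref{alexcastro4} with Theorem \ref{nsq4} and then perform a root-system reduction that identifies the resulting upper bound with the one stated.

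Since both $\mathcal{O}_\lambda$ and the quantity $\min_{\check\alpha}|\langle\lambda,\check\alpha\rangle|$ depend only on the Weyl orbit of $\lambda$, I may assume that $\lambda$ lies in the closed fundamental Weyl chamber. Then $\mathcal{O}_\lambda\cong G_{\mathbb{C}}/P$ where $P\supset B$ is the standard parabolic with $S_P=\{\alpha\in S:\langle\lambda,\check\alpha\rangle=0\}$, and $\langle\lambda,\check\alpha\rangle>0$ for every $\alpha\in S-S_P$. For each such $\alpha$, the Schubert generator $A_\alpha:=[X_P(s_\alpha)]\in H_2(G_{\mathbb{C}}/P;\Z)$ satisfies $\w_\lambda(A_\alpha)=\langle\lambda,\check\alpha\rangle$ by the computation of cohomology classes in Section \ref{chernline}.

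For each $\alpha\in S-S_P$, Theorem \ref{alexcastro4} provides the nonvanishing Gromov--Witten invariant
$$
\operatorname{GW}^{\,\operatorname{alg}}_{A_\alpha,2}\bigl(\operatorname{PD}[1\cdot P],\operatorname{PD}[\Gamma_{A_\alpha}(1\cdot P)^{\operatorname{op}}]\bigr)=1.
$$
The class $A_\alpha$ is $J$-indecomposable, because every effective spherical class in $G_{\mathbb{C}}/P$ is a non-negative integer combination of the generators $\{A_\beta\}_{\beta\in S-S_P}$ and so $A_\alpha$ cannot be split into two non-trivial effective summands. The consistency lemma from Section \ref{gwcm} identifies the algebraic invariant with $\operatorname{GW}^{J}_{A_\alpha,2}$, and Theorem \ref{nsq4} then yields
$$
\operatorname{Gwidth}(\mathcal{O}_\lambda,\w_\lambda)\leq\w_\lambda(A_\alpha)=\langle\lambda,\check\alpha\rangle.
$$
When $[\w_\lambda]=\lambda$ fails to be rational, one approximates $\lambda$ by rational points in the same face of the Weyl chamber (so that $P$, the underlying complex manifold $G_{\mathbb{C}}/P$, and its Gromov--Witten invariants remain unchanged) and passes to the limit via lower semicontinuity of the Gromov width under smooth K\"ahler deformations. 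Minimizing over $\alpha\in S-S_P$ gives $\operatorname{Gwidth}(\mathcal{O}_\lambda,\w_\lambda)\leq\min_{\alpha\in S-S_P}\langle\lambda,\check\alpha\rangle$.

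To finish, I would identify this simple-coroot minimum with the minimum over all coroots appearing in the theorem. Because $|\langle\lambda,\check\beta\rangle|=|\langle\lambda,-\check\beta\rangle|$, it suffices to consider $\check\beta\in\check R^+$. The dual root system $\check R$ has simple base $\check S$, so each $\check\beta\in\check R^+$ expands as $\check\beta=\sum_{\gamma\in S}c_\gamma\check\gamma$ with $c_\gamma\in\Z_{\geq 0}$. Dominance of $\lambda$ makes every $\langle\lambda,\check\gamma\rangle\geq 0$, with equality precisely when $\gamma\in S_P$; hence $\langle\lambda,\check\beta\rangle\neq 0$ forces $c_\gamma\geq 1$ for some $\gamma\in S-S_P$, so that $\langle\lambda,\check\beta\rangle\geq\langle\lambda,\check\gamma\rangle\geq\min_{\alpha\in S-S_P}\langle\lambda,\check\alpha\rangle$, matching the simple-coroot minimum. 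The main technical obstacle is the bookkeeping among the algebraic, $J$-indecomposable (Remark \ref{gwj}), and Cieliebak--Mohnke flavors of Gromov--Witten invariant together with the rationality hypothesis of Theorem \ref{nsq4}; once these are in place, the root-system reduction at the end is straightforward.
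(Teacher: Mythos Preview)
Your proof is correct and follows essentially the same strategy as the paper: invoke Theorem~\ref{alexcastro4} together with Theorem~\ref{nsq4} to obtain $\operatorname{Gwidth}(\mathcal{O}_\lambda,\w_\lambda)\leq\langle\lambda,\check\alpha\rangle$ for every $\alpha\in S-S_P$, and then reduce the minimum over all coroots to the minimum over simple coroots. The only minor variation is in the passage from rational to irrational $\lambda$: the paper uses the curve-existence conclusion of Theorem~\ref{nsq4} for a sequence $\lambda_n\to\lambda$, applies Gromov compactness to produce a $\tilde J$-holomorphic sphere of bounded area through an arbitrary point of $(G_{\mathbb{C}}/P,\w_\lambda)$, and then appeals to Theorem~\ref{nsq}, whereas you use the Gromov-width conclusion of Theorem~\ref{nsq4} directly for each $\lambda_n$ and pass to the limit via lower semicontinuity of the Gromov width; both devices are standard and equivalent in strength here.
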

\begin{proof}
Let $P\subset G_{\mathbb{C}}$ be a parabolic subgroup such that
$\mathcal{O}_\lambda \cong G_{\mathbb{C}}/P.$ We establish the
following convention:  for every $\tilde{\lambda}\in \mathfrak{t}^*$
such that the coadjoint orbit $\mathcal{O}_{\tilde{\lambda}}$ is
isomorphic with $G_{\mathbb{C}}/P,$ we are going to see the
Kostant-Kirillov-Souriau form $\w_{\tilde{\lambda}}$ defined on
$\mathcal{O}_{\tilde{\lambda}}$ as a symplectic form on
$G_{\mathbb{C}}/P.$

Let $\{\lambda_n\}_{n\in \mathbb{Z}_{> 0}}\subset \mathfrak{t}^*$ be
a sequence that converges to $\lambda.$ Assume that for every $n\in
\Z_{>0},$ the coadjoint orbit $\mathcal{O}_{\lambda_n}$ is
isomorphic with $G_{\mathbb{C}}/P$ and the Kostant-Kirillov-Souriau
form $\w_{\lambda_n}$ represents a cohomology class
$[\w_{\lambda_n}]$ with rational coefficients.

Let $\tilde{J}\in \mathcal{J}(G_{\mathbb{C}}/P, \w_{\lambda}).$ Let
$\tilde{J}_n\in \mathcal{J}(G_{\mathbb{C}}/P, \w_{\lambda_n})$ be a
sequence that converges to $\tilde{J}$ in the $C^\infty$-topology.

For every $n\in \Z_{>0},$ the symplectic manifold
$(G_{\mathbb{C}}/P, \w_{\lambda_n})$ meets all the requirements of
Theorem \ref{nsq4}: the almost complex structure $J$ coming from the
quotient of complex Lie groups $G_{\mathbb{C}}/P$ is regular and
compatible with the Kostant-Kirillov-Souriau form $\w_{\lambda_n}.$
For any simple root $\alpha \in S-S_P,$ the homology class
$A_\alpha\in H_2(G_{\mathbb{C}}/P; \Z)$ is $J$-indecomposable and
according to the previous theorem there exists a cohomology class
$a\in H^*(G_{\mathbb{C}}/P; \Z)$ such that
$$
\operatorname{GW}_{A_{\alpha},2}^{J}\bigl(\operatorname{PD}[1\cdot
P],\, a \bigr)\ne 0
$$
Thus by Theorem \ref{nsq4}, for every point $p\in G_{\mathbb{C}}/P,$
we can find a $\tilde{J}_n$-holomorphic sphere $u_n:\cpi \to
G_{\mathbb{C}}/P$ of degree $B_n $ passing through $p$ with
$0<\w_{\lambda_n}(B_n)\leq \w_{\lambda_n}(A_\alpha).$ The Gromov
compactness theorem implies that there exists a
$\tilde{J}$-holomorphic curve of degree $B$ passing through $p$ with
$0<\w_\lambda(B) \leq \w_\lambda(A_\alpha).$ By Theorem \ref{nsq},
$$
\operatorname{Gwidth}(\mathcal{O}_\lambda, \w_{\lambda}) \leq
\w_\lambda(A_\alpha)=|\langle \lambda, \check{\alpha} \rangle|
$$
The above inequality  holds for any $\alpha\in S-S_P,$ and as
consequence for any $\alpha \in R^+-R^+_P,$ and we are done.

\end{proof}

\section{Appendix: Fibrations}

Let $G$ be a compact simple Lie group. Let $T\subset G$ be a maximal
torus, $B\subset G_{\mathbb{C}}$ be a Borel subgroup with
$T_{\mathbb{C}}\subset B$ and $S$ be the corresponding system of
simple roots. Let $W=N(T)/T$ be the associated Weyl group. For a
parabolic subgroup $P \subset G_{\mathbb{C}}, B\subset P,$ let
$W_P=N_P(T)/T$ be the Weyl group of $P$ and $S_P$ be the subset of
simple roots of $S$ whose corresponding reflections are in $W_P.$
Let $W^P\subset W$ be the set of all minimum length representatives
for cosets in $W/W_P.$ For $w\in W^P,$ let $X_P(w)\subset
G_{\mathbb{C}}/P$ be the Schubert variety associated with $w\in
W^P.$

For a pair of parabolic subgroups $P,  Q \subset G_{\mathbb{C}},$
such that  $B\subset P\subset Q, $ we have a quotient map
$G_{\mathbb{C}}/P \to G_{\mathbb{C}}/Q.$ We want to study the images
and preimages of Schubert varieties under these quotient maps.

\begin{lemma}[\textbf{Stumbo \cite{stumbo}}]
For parabolic subgroups $P, Q \subset G_{\mathbb{C}}$ such that
$B\subset P\subset Q$ define
\begin{align*}
W_Q^{P}&:=\{ w \in W_{Q}: l(ws) > l(w) \text{ for $s\in
S_P$}\}\\&=\text{ minimum length representatives of cosets in
$W_{Q}/W_P$}
\end{align*}
Given $w\in W^P,$ there is a unique $w^Q\in W^Q$ and a unique
$w_Q^P\in W_Q^P$ such that $w=w^Qw^P_Q.$ Their lengths satisfy
$l(w)=l(w^Q)+l(w^P_Q).$
\end{lemma}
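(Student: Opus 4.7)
The plan is to invoke the standard minimum-length-representative decomposition in a Coxeter group twice and then track lengths. Recall the basic fact already used in the paper: for any parabolic subgroup of a Coxeter group, each group element admits a unique factorization as a minimum length coset representative times a parabolic element, and the lengths add. Applied to $W$ with parabolic $W_Q$, every $w\in W$ can be written uniquely as $w=w^Q\cdot u$ with $w^Q\in W^Q$, $u\in W_Q$, and $l(w)=l(w^Q)+l(u)$.

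First I would apply this factorization to the given $w\in W^P$, producing $w=w^Q u$ with $u\in W_Q$ and $l(w)=l(w^Q)+l(u)$. The heart of the argument is then to show $u\in W_Q^P$, i.e.\ $l(us)>l(u)$ for every $s\in S_P$. I would argue by contradiction: if some $s\in S_P$ gave $l(us)<l(u)$, then the triangle inequality for the length function yields
\[
l(ws)=l(w^Q u s)\leq l(w^Q)+l(us) < l(w^Q)+l(u)=l(w),
\]
which contradicts the defining property $l(ws)>l(w)$ of $w\in W^P$ (recall $s\in S_P\subset S$). Hence $u\in W_Q^P$, and setting $w_Q^P:=u$ gives the existence of the decomposition $w=w^Q w_Q^P$ with the required length additivity.

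Uniqueness is then automatic: any other decomposition $w=v\cdot u'$ with $v\in W^Q$ and $u'\in W_Q^P\subset W_Q$ is simultaneously a $W/W_Q$-decomposition, so by uniqueness in step one $v=w^Q$ and $u'=w_Q^P$. Length additivity is built into step one.

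The only mildly subtle point, and the spot most likely to require a line of justification, is the identification asserted in the lemma itself: that the descent set $\{w\in W_Q : l(ws)>l(w) \text{ for all } s\in S_P\}$ coincides with the minimum length coset representatives of $W_Q/W_P$. I would handle this by observing that $W_Q$ is itself a Coxeter group with simple system $S_Q\supseteq S_P$ and $W_P$ is a standard parabolic subgroup of $W_Q$, so this is exactly the standard characterization of minimum length representatives applied inside $W_Q$; no new argument is needed. Beyond this, the proof is purely a length-counting check, and I do not expect any serious obstacle.
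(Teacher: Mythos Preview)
Your proof is correct. The paper does not actually prove this lemma; it is stated with attribution to Stumbo and used as a black box in the subsequent Lemma~\ref{longest} and Proposition~\ref{proposition}. Your argument---factoring $w$ via $W/W_Q$ and then checking that the second factor lies in $W_Q^P$ by a length-contradiction---is the standard one, and the identification of $W_Q^P$ with minimum length representatives of $W_Q/W_P$ is, as you note, just the usual characterization applied to the Coxeter system $(W_Q, S_Q)$ with standard parabolic $W_P$. Nothing further is needed.
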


\begin{lemma}\label{longest}
For parabolic subgroups $P, Q \subset G_{\mathbb{C}}$ such that
$B\subset P\subset Q,$ let $w_q^p, w_p$ and $w_q$ be the longest
elements in $W_Q^P,  W_P$ and $W_Q,$ respectively. Then,
$w_q^p=w_qw_p.$
\end{lemma}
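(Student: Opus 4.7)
The plan is to apply the Stumbo decomposition (the preceding lemma) to the longest element $w_q$ itself and identify its $W_P$-component. First I would write, via Stumbo's lemma applied in $W_Q$, the unique factorization $w_q = u_0 v_0$ with $u_0 \in W_Q^P$, $v_0 \in W_P$, and $l(w_q) = l(u_0) + l(v_0)$. The key step is to show $v_0 = w_p$. Suppose not: since $w_p$ is the unique longest element of $W_P$, there exists $s \in S_P$ with $l(v_0 s) = l(v_0) + 1$. The defining property of $W_Q^P$ (namely $l(u_0 s') > l(u_0)$ for all $s' \in S_P$) upgrades, via the standard length-additivity for coset representative decompositions, to $l(u_0 v) = l(u_0) + l(v)$ for every $v \in W_P$. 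Applying this to both $v_0$ and $v_0 s$ gives
\[
l(u_0 v_0 s) = l(u_0) + l(v_0 s) = l(u_0) + l(v_0) + 1 = l(w_q) + 1,
\]
contradicting the maximality of $l(w_q)$ in $W_Q$. Hence $v_0 = w_p$, and since $w_p^2 = 1$, we get $u_0 = w_q w_p$.

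Next I would verify directly that $w_q w_p$ lies in $W_Q^P$ and is the longest such element, establishing that the notation $w_q^p$ in the statement is unambiguous. For $s \in S_P$, the length-reversing property of $w_q$ in $W_Q$ (i.e.\ $l(w_q u) = l(w_q) - l(u)$ for $u \in W_Q$) gives
\[
l(w_q w_p s) = l(w_q) - l(w_p s) = l(w_q) - (l(w_p) - 1) = l(w_q w_p) + 1,
\]
so $w_q w_p \in W_Q^P$. For maximality, given any $u \in W_Q^P$, length-additivity yields $l(u w_p) = l(u) + l(w_p)$, and since $u w_p \in W_Q$ has length at most $l(w_q)$, we conclude $l(u) \leq l(w_q) - l(w_p) = l(w_q w_p)$.

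The main obstacle is really just a bookkeeping one: making sure the length-additivity $l(u_0 v) = l(u_0) + l(v)$ for $u_0 \in W_Q^P$ and all $v \in W_P$ (not only for simple $v = s \in S_P$) is invoked correctly, since this is what lets us derive the contradiction $l(w_q) + 1 \leq l(w_q)$. This upgrade is standard for minimum length coset representatives and is implicit in Stumbo's lemma quoted above; once it is in hand, both the identification $v_0 = w_p$ and the length bound for arbitrary $u \in W_Q^P$ are immediate, and the equality $w_q^p = w_q w_p$ follows.
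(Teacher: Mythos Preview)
Your proof is correct but takes a different route from the paper's. You work entirely inside $W_Q$: apply Stumbo's decomposition to $w_q$ itself, force the $W_P$-factor to equal $w_p$ via a length-maximality contradiction, and then verify directly that $w_q w_p$ is the longest element of $W_Q^P$. The paper instead passes through the ambient Weyl group $W$: it identifies the longest element of $W^P$ as $w_0 w_p^{-1}$ and of $W^Q$ as $w_0 w_q^{-1}$ (each by Stumbo applied to $w_0$), then invokes Stumbo once more at the level $W^P = W^Q \cdot W_Q^P$ to conclude that the longest element of $W^P$ factors as $(w_0 w_q^{-1}) \cdot w_q^p$, whence $w_q^p = w_q w_p^{-1} = w_q w_p$. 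Your argument is more self-contained in that it never leaves $W_Q$ and never mentions $w_0$, at the price of spelling out the length-additivity upgrade explicitly; the paper's version is a bit slicker but leans on the full chain $W \supset W_Q \supset W_P$.
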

\begin{proof}
Let $w_0$ be the longest element in $W.$ The quotient map $\pi : W
\to W^P\cong W/W_P$ is order preserving and thus the longest element
in $W^P$ is $\pi(w_0)$. By the previous lemma $w_0 = \pi(w_0)w_p$,
so that $\pi(w_0) = w_0w^{-1}_p.$ Similarly, for the quotient map
$\pi' : W \to W^Q\cong W/W_Q,$ we have that $\pi'(w_0) =
w_0w^{-1}_q$ is the longest element in $W^Q.$ Using again the
previous Lemma, we have that the permutation $\pi'(w_0)w_q^p$ is the
longest permutation in $W^P$ . So that $\pi'(w_0)w_q^p = \pi(w_0)$,
and thus $w_0w^{-1}_q w_q^p = w_0w^{-1}_p$ or $w_q^p = w_qw^{-1}_p.$
The longest element in any finite Coxeter group is idempotent. Thus
$w^2_p= e,$ and we are done.
\end{proof}

\begin{Proposition}\label{proposition}
For parabolic subgroups $P, Q\subset G_{\mathbb{C}}$ such that
$P\subset Q \subset G_{\mathbb{C}},$ let
$$
\pi_q:G_{\mathbb{C}}/P \to G_{\mathbb{C}}/Q
$$
be the quotient fibration. If we decompose $w\in W^P$ as $w^Qw_Q^P,$
where $w^Q\in W^Q$ and $w_Q^P\in W_Q^P,$ then
$\pi_{q*}(X_P(w))=X_Q(w^Q).$ On the other hand, if $\tilde{w} \in
W^Q,$ then $\pi_q^*(X_Q(\tilde{w}))=X_P(\tilde{w}w_q^p),$ where
$w_q^p$ is the longest element in $W_Q^P.$
\end{Proposition}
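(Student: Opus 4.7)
The plan is to handle the pushforward and preimage claims separately, reducing each to an elementary Bruhat-cell computation together with the Stumbo factorization of the preceding lemma. In both cases the key structural fact is that $\pi_q:G_{\mathbb{C}}/P\to G_{\mathbb{C}}/Q$ is a Zariski-locally trivial holomorphic fiber bundle with fiber $Q/P$, hence in particular proper, flat, and with irreducible fiber.

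For the pushforward statement, I would start at the level of Schubert cells. Given $w\in W^P$ with decomposition $w=w^Q w_Q^P$, the factor $w_Q^P$ lies in $W_Q^P\subset W_Q$, so $wQ=w^Q Q$ as cosets in $G_{\mathbb{C}}/Q$. This immediately gives
$$
\pi_q(C_P(w))=\pi_q(BwP/P)=BwQ/Q=Bw^Q Q/Q=C_Q(w^Q).
$$
Since $\pi_q$ is a proper morphism of projective varieties, it sends closures to closures, yielding $\pi_{q*}(X_P(w))=\overline{\pi_q(C_P(w))}=X_Q(w^Q)$.

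For the preimage statement, I would exploit the bundle structure. The preimage $\pi_q^{-1}(X_Q(\tilde w))$ is the restriction of the $Q/P$-bundle to the irreducible base $X_Q(\tilde w)$; being a fiber bundle with irreducible fiber over an irreducible base, it is itself irreducible of complex dimension $l(\tilde w)+\dim_{\mathbb{C}}(Q/P)=l(\tilde w)+l(w_q^p)$. On the other hand, the Stumbo decomposition of $\tilde w w_q^p\in W^P$ is precisely $\tilde w\cdot w_q^p$, since $\tilde w\in W^Q$ and $w_q^p\in W_Q^P$; in particular $l(\tilde w w_q^p)=l(\tilde w)+l(w_q^p)$. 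Applying the pushforward claim to $\tilde w w_q^p$ gives $\pi_q(C_P(\tilde w w_q^p))=C_Q(\tilde w)$, so $X_P(\tilde w w_q^p)\subset \pi_q^{-1}(X_Q(\tilde w))$. The two sides are then irreducible closed subvarieties of the same complex dimension with one contained in the other, and hence must coincide.

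The main obstacle is essentially a sanity check rather than a real difficulty: one must verify the identities $\dim_{\mathbb{C}}(Q/P)=l(w_q^p)$ and $l(\tilde w w_q^p)=l(\tilde w)+l(w_q^p)$, both immediate from Stumbo's lemma applied respectively to $Q/P$ (whose top Schubert cell has dimension equal to the longest element $w_q^p$ of $W_Q^P$) and to $G_{\mathbb{C}}/P$. There is no analytic input; the only geometric ingredient beyond the Bruhat decomposition is the local triviality of $\pi_q$, a standard consequence of the local triviality of the principal $Q$-bundle $G_{\mathbb{C}}\to G_{\mathbb{C}}/Q$.
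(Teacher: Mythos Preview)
Your pushforward argument is essentially identical to the paper's: both compute $\pi_q(C_P(w))=C_Q(w^Q)$ at the cell level and then pass to closures using that $\pi_q$ is closed.

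For the preimage you take a somewhat different route from the paper. The paper argues combinatorially: it writes
\[
\pi_q^{-1}(C_Q(\tilde w))=\bigsqcup_{\substack{v\in W^P\\ v^Q=\tilde w}} C_P(v),
\]
observes that the Bruhat-maximal element of the index set is $\tilde w\, w_q^p$, and concludes that the closure of this union is $X_P(\tilde w\, w_q^p)$. You instead argue geometrically, using local triviality of $\pi_q$ to see that $\pi_q^{-1}(X_Q(\tilde w))$ is irreducible of the correct dimension and then matching it with $X_P(\tilde w\, w_q^p)$ by inclusion plus equality of dimensions. Both arguments are correct and entirely elementary; the paper's version yields the explicit cell decomposition of the preimage as a byproduct, while yours sidesteps the need to identify the Bruhat-maximal element of $\{v\in W^P:v^Q=\tilde w\}$ directly.
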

\begin{proof}
The map $\pi_q:G_{\mathbb{C}}/P \to G_{\mathbb{C}}/Q$ is
$B$-equivariant and closed (this is a consequence of for example the
closed map lemma). This implies that Schubert cells, which are
$B$-orbits, and Schubert varieties, which are their closures, in
$G_{\mathbb{C}}/P$ are mapped to Schubert cells and Schubert
varieties in $G_{\mathbb{C}}/Q,$ respectively.

For $w\in W^P\subset W,$ there exist unique $w^Q\in W^Q$ and
$w_{Q}^P\in W_Q^P\subset W_Q$ such that $w=w^Qw_{Q}^P$ and
$l(w)=l(w^Q)+l(w_{Q}^P).$ The Schubert cell $C_P(w)=BwP/P \subset
G_{\mathbb{C}}/P$ is mapped to the Schubert cell
$C_Q(w^Q)=Bw^QQ/Q\subset G_{\mathbb{C}}/Q$ via $\pi,$ and
$$
\pi_{q*}(X_P(w))=\pi_{q*}(\overline{C_P(w)})=\overline{\pi_{q*}(C_P(w))}=\overline{C_Q(w^Q)}=X_Q(w^Q).
$$
On the other hand, if $\tilde{w}\in W^Q,$ then
$$
\pi_q^{*}(C_Q(\tilde{w}))=\bigsqcup_{\substack{\quad\,v\in W^P\\v^Q=
\tilde{w}}} C_P(v).
$$
The maximum element, with respect to the Bruhat order defined on
$W^P,$ in the set $\{v\in W^P: v^Q= \tilde{w}\}$ is $\tilde{w}
w_q^p,$ where $w_q^p$ denotes the longest element in $W_Q^P.$ Since
$\pi$ is a continuous map, we have that
$$
\pi_q^{*}(X_Q(\tilde{w}))=\pi_q^{*}(\overline{C_Q(\tilde{w})})=\overline{\bigsqcup_{\substack{\quad\,v\in
W^P \\ v^Q= \tilde{w}}} C_P(v)}=\bigsqcup_{\substack{\quad\,v\in W^P \\
v \leq_B \tilde{w} w_q^p}} C_P(v)=X_P(\tilde{w} w_q^p).
$$
\end{proof}
The following two technical lemmas are needed it in the proof of
Theorem \ref{alexcastro3}:
\begin{lemma}\label{mainlemma}
Let $\alpha\in S$ be a simple root and $N(\alpha)\subset S$ be the
neighbors of $\alpha$ in the Dynkin diagram of $G,$ i.e., the simple
roots connected to $\alpha$ by an edge in the Dynkin diagram of $G.$
Let $P, R \subset G_{\mathbb{C}} $ be the parabolic subgroups such
that $S_P=S-\{\alpha\}, S_{R}=S-(N(\alpha) \cup \{\alpha\}),$
respectively. Then
$$
W_{P}^{R}\cdot s_\alpha \subset W^{P}
$$
\end{lemma}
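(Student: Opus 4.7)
The plan is to translate the claim $w s_\alpha \in W^P$ into a positivity statement about roots. Recall that for $u \in W$ and $\beta$ a simple root, $l(us_\beta) > l(u)$ is equivalent to $u(\beta) \in R^+$; hence $w s_\alpha \in W^P$ iff $w s_\alpha(\beta) \in R^+$ for every $\beta \in S_P$. So I will fix $w \in W_P^R$ and $\beta \in S_P = S - \{\alpha\}$ and split into two cases according to whether $\beta$ is a Dynkin-diagram neighbor of $\alpha$.

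First, I would handle the easy case $\beta \in S_P - N(\alpha)$. Since $\beta$ and $\alpha$ are non-adjacent simple roots we have $\langle \beta, \check\alpha\rangle = 0$, so $s_\alpha(\beta) = \beta$ and $w s_\alpha(\beta) = w(\beta)$. Because $\beta \in S_P$ and $\beta \notin N(\alpha) \cup \{\alpha\}$, we have $\beta \in S_R$; the hypothesis $w \in W_P^R$ then gives $l(ws_\beta) > l(w)$, equivalently $w(\beta) \in R^+$, finishing this case.

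The harder case is $\beta \in N(\alpha)$, where now $\langle \beta, \check\alpha\rangle < 0$ and $s_\alpha(\beta) = \beta + c\alpha$ with $c := -\langle\beta,\check\alpha\rangle > 0$. Here $w(\beta)$ is a root in $\mathbb{Z}S_P$, which could a priori be negative, so a more delicate argument is required. The key auxiliary claim is: \emph{for every $w \in W_P$, the root $w(\alpha)$ is a positive root whose $\alpha$-coefficient (in the simple-root basis) equals $+1$.} This follows by induction on $l(w)$: the inductive step uses that each simple reflection $s_\gamma$ with $\gamma \in S_P$ sends $\alpha \mapsto \alpha - \langle\alpha,\check\gamma\rangle\gamma$ and preserves $\mathbb{Z}S_P$, so if $w(\alpha) = \alpha + \xi$ with $\xi \in \mathbb{Z}S_P$ then $s_\gamma w(\alpha)$ has the same form. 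Granted this, we compute
\[
w s_\alpha(\beta) = w(\beta) + c\,w(\alpha).
\]
Now $w(\beta) \in \mathbb{Z}S_P$ has zero $\alpha$-coefficient, while $c\,w(\alpha)$ has $\alpha$-coefficient $c > 0$; hence $w s_\alpha(\beta)$ is a root whose $\alpha$-coefficient is strictly positive, so it is a positive root, as required.

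The main obstacle is precisely this second case: one cannot read off $w(\beta) > 0$ from $w \in W_P^R$ when $\beta \in N(\alpha) \subset S_P - S_R$, so the argument must instead exploit that applying $s_\alpha$ injects a genuine $\alpha$-contribution into the root and that $w \in W_P$ cannot cancel this contribution, since $W_P$ preserves the $\alpha$-coefficient of any root on which it acts by adding only elements of $\mathbb{Z}S_P$. Once this positivity-of-$\alpha$-coefficient lemma is in hand, the two cases combine to give $w s_\alpha(\beta) \in R^+$ for all $\beta \in S_P$, which is exactly the statement $w s_\alpha \in W^P$.
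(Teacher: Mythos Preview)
Your proof is correct. Both you and the paper split into the same two cases---whether the simple root $\beta \in S_P$ under consideration is a Dynkin neighbor of $\alpha$ or not---and in the non-neighbor case your argument is essentially a root-theoretic translation of the paper's commutation-and-length argument. The genuine difference is in the neighbor case: the paper argues by contradiction, using the Exchange and Deletion Conditions to show that $l(ws_\alpha s_\beta) < l(ws_\alpha)$ would force $s_\alpha s_\beta s_\alpha \in W_P$ (of length $3$) and then, via a second application of Exchange, $s_\alpha \in W_P$, which is absurd. You instead compute directly that $ws_\alpha(\beta) = w(\beta) + c\,w(\alpha)$ has strictly positive $\alpha$-coefficient, using the elementary observation that $W_P$ acts on $\alpha$ by adding elements of $\mathbb{Z}S_P$ and hence preserves the $\alpha$-coefficient. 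Your route is more self-contained---it needs only the characterization $l(us_\beta)>l(u) \Leftrightarrow u(\beta)\in R^+$ and a short induction, rather than the Exchange/Deletion machinery---while the paper's argument stays entirely within the Coxeter-word formalism and would transfer verbatim to any Coxeter group without reference to an ambient root lattice.
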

\begin{proof}
Let $w\in W_{P}^{R}.$ We write $w=s_1\cdot \ldots \cdot s_r$ where
$s_1, \ldots, s_r$ are simple reflections in $S_P.$ Suppose that
there exists a simple reflection $t$ in $S_P$ such that $l(ws_\alpha
t)<l(ws_\alpha).$ By the Exchange Principle (see e.g. Humphreys
\cite{humphreys}),
$$
ws_\alpha t=s_1\cdot \ldots \cdot \hat{s_i} \cdot \ldots \cdot
s_rs_\alpha$$ for some $i,$ in particular $s_\alpha t s_\alpha \in
W_P.$ We now consider two cases and see that this is not possible:
\begin{enumerate}
\item Suppose that $s_\alpha t = t s_\alpha.$ Thus $t\notin
N(\alpha),$ but $t\ne s_\alpha$ so $t\in S-(N(\alpha) \cup
\{s_\alpha\})=S_{R}.$ As $w\in W_P^{R}$
$$
l(wt)>l(w),
$$
hence
$$
l(ws_{\alpha}t)=l(wts_{\alpha})=l(wt)+1>l(w)+1=l(ws_\alpha),
$$
which contradicts our asumption of having $l(ws_\alpha
t)<l(ws_\alpha)$.

\item Suppose that $s_\alpha t \ne ts_\alpha.$
If $l(s_\alpha t s_\alpha)\ne 3,$ by the Deletion Principle (see
e.g. Humphreys \cite{humphreys}) either $s_\alpha t
s_\alpha=s_\alpha,$ or $s_\alpha t s_\alpha=t,$ which are not
possible. So $l(s_\alpha t s_\alpha)=3.$ Now, clearly $l(s_\alpha
t)=l(s_\alpha t s_\alpha s_\alpha)=2< l(s_\alpha t s_\alpha),$ so if
$s_\alpha t s_\alpha =s_1s_2s_3,$ for some simple reflections $s_1,
s_2, s_3 \in S_P,$ by the Exchange Principle $s_\alpha t \in W_P$
which would imply that $s_\alpha\in W_P,$ a contradiction.
\end{enumerate}
\end{proof}

\begin{lemma}\label{echeck}
Let $\alpha \in S$ be a simple root, $P \subset G_{\mathbb{C}}$ be
the maximal parabolic subgroup associated with $\alpha$ and $A$ be
the Schubert class that cyclic generates $H_2( G_{\mathbb{C}}/P;
\Z).$ Let $P, Q, R\subset G_{\mathbb{C}}$ be the parabolic subgroups
with $S_P=S-\{\alpha\}, S_{Q}=S-N(\alpha)$ and
$S_{R}=S-(N(\alpha)\cup \{\alpha\}),$ where $N(\alpha)\subset S$
denotes the neighbors of $\alpha$ in the Dynkin diagram of $G.$ Let
$\pi_p:G_{\mathbb{C}}/R \to G_{\mathbb{C}}/P$ and
$\pi_q:G_{\mathbb{C}}/R\to G_{\mathbb{C}}/Q$ be the corresponding
quotient maps. Then
$$
\pi_{q*}(\pi_p^{*}(1\cdot P))=X_{Q}(w_{p}w_{r})
$$
where $w_p$ and $w_{r}$ are the longest elements in $W_P$ and
$W_{R},$ respectively. In addition,
$$
\pi_{p*}(\pi_q^{*}(X_{Q}(w_pw_r)))=X_P(w_pw_{r}s_\alpha),
$$
where $s_\alpha$ is the simple reflection associated to $\alpha\in
S.$
\end{lemma}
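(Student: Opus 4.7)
The plan is to compute both identities by iteratively applying Proposition \ref{proposition}, which translates pullback and pushforward along the fibrations $\pi_p$ and $\pi_q$ into the combinatorial operation of multiplying by the longest element of an appropriate shuffle set $W_*^*$ and extracting the factors in the Stumbo decomposition. Throughout I would freely use Lemma \ref{longest} to rewrite these longest elements as products of longest elements of $W_P$, $W_Q$, $W_R$, and I would note at the start the useful inclusion $W^P \subset W^R$ (since $S_R\subset S_P$), as well as the simple but crucial observation that every reduced word for an element of $W_P$ uses only reflections from $S_P$.

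For the first identity, I would begin by pulling back the point $X_P(e) = 1\cdot P$ along $\pi_p\colon G_{\mathbb{C}}/R \to G_{\mathbb{C}}/P$ using Proposition \ref{proposition}, obtaining $\pi_p^*(X_P(e)) = X_R(w_p^r)$, and then identify $w_p^r = w_p w_r$ via Lemma \ref{longest}. To push this forward along $\pi_q$, I would apply Proposition \ref{proposition} a second time: this requires the Stumbo decomposition $w_p w_r = w^Q\cdot w_Q^R$ with $w^Q \in W^Q$ and $w_Q^R \in W_Q^R$. The key sublemma is that $w_p w_r$ already lies in $W^Q$, so the decomposition is simply $(w_p w_r)\cdot e$ and the image is $X_Q(w_p w_r)$. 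I would verify this by checking $l(w_p w_r \cdot s) > l(w_p w_r)$ for each $s \in S_Q = S_R \cup \{\alpha\}$: for $s \in S_R$ this is the defining property of $w_p^r \in W_P^R$, and for $s = s_\alpha$ it follows from the fact that no reduced expression of $w_p w_r \in W_P$ can end in $s_\alpha$, since $\alpha \notin S_P$.

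For the second identity, I would first pull back $X_Q(w_p w_r)$ along $\pi_q$ using Proposition \ref{proposition}, giving $\pi_q^*(X_Q(w_p w_r)) = X_R\bigl(w_p w_r\cdot w_q^r\bigr)$, where $w_q^r$ is the longest element of $W_Q^R$. The main observation is that $w_q^r = s_\alpha$: by construction $S_R = S \setminus (N(\alpha)\cup\{\alpha\})$, so no element of $S_R$ is a Dynkin neighbor of $\alpha$; hence $s_\alpha$ commutes with every simple reflection generating $W_R$, which forces $W_Q = W_R \times \langle s_\alpha\rangle$ and therefore $W_Q^R = \{e, s_\alpha\}$. Pushing $X_R(w_p w_r s_\alpha)$ forward along $\pi_p$ by Proposition \ref{proposition} then amounts to the Stumbo decomposition of $w_p w_r s_\alpha$ (which lies in $W^R$ since $W^P \subset W^R$) as $w^P \cdot w_P^R$; by Lemma \ref{mainlemma} the element $w_p w_r s_\alpha$ is already in $W^P$, so the decomposition is the trivial $(w_p w_r s_\alpha)\cdot e$ and we obtain $X_P(w_p w_r s_\alpha)$.

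The step that demands the most care is the identification $w_q^r = s_\alpha$, which is exactly where the specific choice $S_R = S\setminus (N(\alpha)\cup\{\alpha\})$ is used to manufacture the commutativity between $s_\alpha$ and $W_R$; everything else is bookkeeping of Stumbo's decomposition, Lemma \ref{longest}, and Lemma \ref{mainlemma}, together with the elementary length observation that reduced words in $W_P$ avoid $s_\alpha$.
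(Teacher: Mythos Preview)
Your proposal is correct and follows essentially the same route as the paper's proof: both arguments iterate Proposition \ref{proposition} along $\pi_p$ and $\pi_q$, use Lemma \ref{longest} to identify $w_p^r = w_p w_r$, observe that $w_p^r \in W^Q$ (the paper states this as the inclusion $W_P^R \subset W^Q$, you verify the length condition directly), and invoke Lemma \ref{mainlemma} for the final pushforward. The one point you make explicit that the paper glosses over is the identification $w_q^r = s_\alpha$ via the commutativity of $s_\alpha$ with $W_R$; the paper writes $\pi_q^*(X_Q(w_p^r)) = X_R(w_p^r s_\alpha)$ without comment, so your added justification there is a genuine improvement in clarity.
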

\begin{proof}
By the Proposition \ref{proposition}, we have that
$\pi_p^{*}(X_P(e))=X_R(w_{p}^{r})=X_{R}(w_{p}w_{r}),$ so
$$
\pi_{q*}(\pi_p^{*}(1\cdot P)=\pi_{q*}(X_{R}(w_{p}^{r})).
$$
Note that $W_P^{R}\subset W^{Q}.$ In particular $w_{p}^{r}\in
W^{Q},$ and thus
$$
\pi_{q*}(\pi_p^{*}(1\cdot P))=X_{Q}(w_{p}^{r})
$$
Finally, Lemma \ref{mainlemma} implies that $w_p^rs_\alpha\in W^P,$
thus
$$
\pi_{p*}(\pi_q^{*}(X_{Q}(w_p^r)))=\pi_{p*}(X_{R}(w_p^rs_\alpha))=X_P(w_p^rs_\alpha),
$$
and we are done.
\end{proof}

\renewcommand{\refname}{Bibliography}
\bibliographystyle{abbrv}
\bibliography{biblography}
\nocite{*}

\end{document}